\newcommand{\MSC}[1]{{\small\emph{AMS Subject Classification (2010): }#1}}
\renewcommand{\keywords}[1]{{\small\emph{Keywords: }#1}}
\newcommand{\Email}[1]{{\sl E-mail address:\/} {\rm\textsf{#1}}}
\newcommand{\be}[1]{\begin{equation}\label{#1}}
\newcommand{\ee}{\end{equation}}
\renewcommand{\(}{\left(}
\renewcommand{\)}{\right)}
\newcommand{\R}{\mathbb R}
\newcommand{\N}{\mathbb N}
\newcommand{\dd}{\;\mathrm{d}}
\newcommand{\barD}{{\phi_0}}
\newcommand{\barDM}{{\phi_0^M}}
\renewcommand{\L}{\mathcal L}
\newcommand{\E}{\mathcal E_\barD}
\newcommand{\F}{\mathcal F_M}
\newcommand{\eps}{\varepsilon}
\newcommand{\LinearOp}{\mathrm H_D}
\newcommand{\scalar}[2]{\left\langle#1,#2\right\rangle_{\!D}}
\newcommand{\LinearizedEnergyOp}{\mathrm E_\phi}
\newcommand{\infimum}[2]{\inf_{\begin{array}{c}#1\cr #2\end{array}}}
\newcommand{\seq}[2]{({#1}_{#2})_{#2\in\N}}
\newtheorem{thm}{Theorem}
\newtheorem{lem}[thm]{Lemma}
\newtheorem{prop}[thm]{Proposition}
\newtheorem{cor}[thm]{Corollary}
\newtheorem{rem}[thm]{Remark}
\newcommand{\DeuxFigures}[5]{\hspace{-0.45\textwidth}\begin{figure}[#1]\begin{center}\makebox[\textwidth][c]{\hfill\begin{subfigure}{0.4\textwidth}\includegraphics[width=\textwidth]{#2}\end{subfigure}\hfill\begin{subfigure}{0.4\textwidth}\includegraphics[width=\textwidth]{#3}\end{subfigure}\hfill}\vspace*{-4pt}\caption{\small #4}\label{#5}\end{center}\end{figure}}
\newcommand{\TroisFigures}[6]{
\begin{figure}[#1]\begin{center}\makebox[\textwidth][c]{
\hfill \begin{subfigure}{0.35\textwidth}\includegraphics[width=\textwidth]{#2}\end{subfigure}
\hfill \begin{subfigure}{0.35\textwidth}\includegraphics[width=\textwidth]{#3}\end{subfigure}
\hfill\begin{subfigure}{0.35\textwidth}\includegraphics[width=\textwidth]{#4}\end{subfigure}\hfill}\vspace*{-4pt}
\caption{\small #5}\label{#6}\end{center}\end{figure}}
\begin{document}
\title[Two crowd motion and herding models]{Stationary solutions of Keller-Segel type crowd motion and herding models: multiplicity and dynamical stability}

\author[J.~Dolbeault]{Jean Dolbeault}
\address[J.~Dolbeault]{Ceremade (UMR CNRS no. 7534), Universit\'e Paris-Dauphine, Place de Lattre de Tassigny, F-75775 Paris C\'edex 16, France. \Email{dolbeaul@ceremade.dauphine.fr}}

\author[G.~Jankowiak]{Gaspard Jankowiak}
\address[G.~Jankowiak]{Ceremade (UMR CNRS no. 7534), Universit\'e Paris-Dauphine, Place de Lattre de Tassigny, F-75775 Paris C\'edex 16, France. \Email{jankowiak@ceremade.dauphine.fr}}

\author[P.~Markowich]{Peter Markowich}
\address[P.~Markowich]{DAMTP, University of Cambridge Cambridge CB3 0WA, UK.\newline \Email{P.A.Markowich@damtp.cam.ac.uk}}

\date{\today}

\begin{abstract} In this paper we study two models for crowd motion and herding. Each of the models is of Keller-Segel type and involves two parabolic equations, one for the evolution of the density and one for the evolution of a mean field potential. We classify all radial stationary solutions, prove multiplicity results and establish some qualitative properties of these solutions, which are characterized as critical points of an energy functional. A notion of variational stability is associated to such solutions.
\par The dynamical stability in a neighborhood of a stationary solution is also studied in terms of the spectral properties of the linearized evolution operator. For one of the two models, we exhibit a Lyapunov functional which allows to make the link between the two notions of stability. Even in that case, for certain values of the mass parameter and all other parameters taken in an appropriate range, we find that two dynamically stable stationary solutions exist. We further discuss qualitative properties of the solutions using theoretical methods and numerical computations.\\[4pt]
\keywords{crowd motion; herding; continuum model; Lyapunov functional; variational methods; dynamical stability; non self-adjoint evolution operators}\\[4pt]
\MSC{35K40; 35J20; 35Q91}
\end{abstract}

\maketitle\thispagestyle{empty}

\section{Introduction}\label{Sec:Intro}

The Keller-Segel model in chemotaxis has attracted lots of attention over the last years. However, most of the theoretical results have been obtained either in a parabolic-elliptic setting or when the coefficients, like the chemosensitivity coefficient, are independent of the solution. Models used in biology usually involve coefficients which depend on the solution itself, thus making the problems far more nonlinear, and also far less understood. The crowd motion and herding models considered here are two problems in the same class, where the main additional features compared to the standard version of the Keller-Segel model are the limitation (prevention of overcrowding) of the drift for the mass density in both models, and the limitation of the source in the equation for the chemo-attractant in one of the two models. Such limitations have important consequences: there is multiplicity of solutions for a given mass, in certain regimes; plateau-like solutions have an interesting pattern for modeling issues; the flux limitation forbids concentration and guarantees nice properties of the solutions, but also raises non-trivial stability issues concerning the set of stationary solutions, which we investigate numerically. The two models can be considered as test cases for the understanding of a very large class of parabolic-parabolic systems with the property of having several attractors. The fact that radial solutions are bounded and can be fully parametrized in relatively simple terms makes the study tractable. Most of the difficulties come from the complicated dependence of the solutions on the total mass, which is the crucial parameter in the two cases. Numerically, the difficulty comes from the parameters of the model which have to be chosen in ranges that make the problem rather stiff.

\subsection{Description of the models}\label{Sec:Description}

In this paper, we shall consider herding and crowd motion models describing the evolution of a density $\rho$ of individuals subject to a drift $\nabla D$ and confined to a bounded, open set $\Omega\subset\R^d$. The evolution equation for $\rho$ is given by
\be{rho}
\partial_t\rho=\Delta\rho-\nabla\cdot\big(\rho\,(1-\rho)\,\nabla D\big)
\ee
where $\rho_t$ stands for the derivative of $\rho$ with respect to time $t$ and the $\rho\,(1-\rho)$ includes the prevention of overcrowding term. For an isolated system, it makes sense to introduce a no-flux boundary condition, that is
\be{bcrho}
\big(\nabla\rho-\rho\,(1-\rho)\,\nabla D\big)\cdot\nu=0\quad\mbox{on}\quad\partial\,\Omega
\ee
which guarantees the conservation of the number of individuals (or conservation of mass), namely that
\be{mass_constraint}
\int_\Omega \rho\;\dd x = M
\ee
is independent of $t$. In the models considered in this paper, we shall assume that the potential $D$ solves a parabolic equation
\be{D}
\partial_tD=\kappa\,\Delta D-\delta\,D+g(\rho)
\ee
and is subject to homogeneous Neumann boundary conditions
\be{bcD}
\nabla D\cdot\nu=0\quad\mbox{on}\quad\partial\,\Omega\;.
\ee
We restrict our purpose either to \emph{Model (I)} when
\be{ModelI}
g(\rho)=\rho\,(1-\rho)
\ee
or to \emph{Model (II)} when
\be{ModelII}
g(\rho)=\rho\;.
\ee
In this paper, our purpose is to characterize stationary solutions and determine their qualitative properties.

\subsection{Motivations}\label{Sec:Motivations}

Human crowd motion models are motivated by the will to prevent stampedes in public places mainly by implementing a better design of walkways. Most crowd motion models do not convey herding effects well enough, that is, loosely speaking, when people bunch up and try to move in the same direction, as typically occurs in emergency situations.

In an effort to improve herding and crowd motion models, M.~Burger \emph{et al.}~in~\cite{bmp2011} have derived Model~(I) and Model (II) as the continuous limit of a microscopic cellular automaton model introduced in~\cite{Kirchner2002}. It takes the form a parabolic-parabolic system for the density of people $\rho$ and for the field $D$, where $D$ is a mean field potential which carries the herding effects. Basically, people are subject to random motion, with a preference for moving in the direction others are following. Random effects are taken into account by a diffusion, while a drift is created by the potential $D$, which accounts for locations that are or were previously occupied. To account for the packing of the people, empty spaces are preferred, which explains the role of the $(1-\rho)$ term in front of the drift, with $1$ being the maximal density. Such a correction is refered to as \emph{prevention of overcrowding} in the mathematical literature.

Both quantities $\rho$ and $D$ undergo diffusion which happens much faster for $\rho$, this point being reflected by the fact that the constant $\kappa$ is assumed to be small. The potential $D$ decays in time with rate $\delta>0$ and increases proportionally to the density $\rho$, but only if the density is not too high in case of Model (I): this is taken into account by the source term $g(\rho)$ given either by \eqref{ModelI} or by \eqref{ModelII}. As we shall see in this paper, interesting phenomena occur when $\delta$ is also taken small.

In many aspects, these models are quite similar to the Keller-Segel model used in chemotaxis. \emph{Prevention of overcrowding} has already been considered in several papers, either in the parabolic-elliptic case in \cite{MR2745794,MR2397995} or in the parabolic-parabolic case in \cite{di2008fully} (with a diffusion dominated large time asymptotics) and \cite{MR2745794} (where, additionally, the case of several species and cross-diffusion was taken in to account). In these papers the emphasis was put on the asymptotic behaviors, with a discussion of the possible asymptotic states and behaviors depending on the nonlinearities in \cite{MR2274484} and a study of plateau-like quasi-stationary solutions and their motion in \cite{MR2397995}. This of course makes sense when the domain is the entire space, but a classification of the stationary solutions in bounded domains and in particular plateau-like solutions is still needed, as it is strongly suggested by \cite{bmp2011} that such solutions have interesting properties, for instance in terms of stability.

Because of the $(1-\rho)$ factor in front of the drift, the transport term vanishes in our models as $\rho$ approaches 1, so that for any initial data bounded by~$1$, the density remains bounded by~$1$. Hence blow-up, which is a major difficulty for the analysis of the usual Keller-Segel system for masses over~$8\pi$ (\emph{cf.} for instance~\cite{blanchet2006two}), does not occur here. In contrast with the parabolic-elliptic Keller-Segel model with prevention of overcrowding studied in \cite{MR2745794}, Models (I) and~(II) are based on a system of coupled parabolic equations. This has interesting consequences for the evolution problem as, \emph{e.g.}, it introduces memory effects. It also has various consequences for the dynamical stability of the stationary states. In Model~(I), the source term in the equation for $D$ involves $\rho\,(1-\rho)$ instead of~$\rho$. Such a nonlinear source term introduces additional difficulties as, for instance, no Lyapunov functional is known up to now.

\subsection{Main results}\label{Sec:Intro-Main}

Let us summarize some of the main results of this paper, in case of Models (I) and~(II), when $\Omega$ is a ball, as far as radial nonnegative stationary solutions are concerned. As we shall see below the stationary solutions of interest are  either constants or monotone functions, which are then plateau-like.
\begin{thm}\label{Thm:Main} Let $\Omega$ be a ball and consider solutions of Models (I) and~(II) subject to boundary conditions \eqref{bcrho} and \eqref{bcD}. Then the masses of the radial nonnegative stationary solutions as defined by \eqref{mass_constraint} range between $0$ and $|\Omega|$ and we have:
\begin{enumerate}
\item[(i)] Non constant stationary solutions exists only for $M$ in a strict sub-interval $(0,|\Omega|)$,
\item[(ii)] Constant solutions are variationally and dynamically unstable in a strictly smaller subinterval,
\item[(iii)] There is a range of masses in which only non-constant stationary solutions are stable, given by the condition that $\kappa\,\lambda_1 + \delta$ is small enough, where $\lambda_1$ denotes the lowest positive eigenvalue of $-\Delta$ in $\Omega$ subject to Neumann homogeneous boundary conditions,
\item[(iv)] For any given mass, variationally stable stationary solutions with low energy are either monotone or constant; in case of Model~(II), monotone, plateau-like solutions are then stable and attract all low energy solutions of the evolution problem in a certain a range of masses.
\end{enumerate}
\end{thm}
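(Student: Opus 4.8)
The plan is to reduce the stationary problem to a single scalar equation and then to dissect it. Imposing $\partial_t\rho=\partial_t D=0$ in \eqref{rho}--\eqref{D}, radial symmetry together with the no-flux condition \eqref{bcrho} forces the flux $\nabla\rho-\rho\,(1-\rho)\,\nabla D$ to vanish identically (in radial coordinates $r^{d-1}$ times the radial flux is constant and vanishes at $r=R$), so that $\nabla\log\tfrac{\rho}{1-\rho}=\nabla D$ wherever $0<\rho<1$; since $\rho\equiv0$ and $\rho\equiv1$ are excluded by $0<M<|\Omega|$, this gives $\rho=h(D+c)$ on all of $\Omega$, where $h(s)=e^s/(1+e^s)$ and $c$ is a constant fixed by \eqref{mass_constraint} (the map $c\mapsto\int_\Omega h(D+c)\dd x$ being increasing and onto $(0,|\Omega|)$). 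The potential then solves $-\kappa\,\Delta D+\delta\,D=g\big(h(D+c)\big)$ with \eqref{bcD}. Because $0<h<1$, every radial nonnegative stationary solution has mass in $(0,|\Omega|)$, and the constant solutions $\rho\equiv M/|\Omega|$, $D\equiv g(M/|\Omega|)/\delta$ exist for every such $M$; part~(i) is then the assertion that for $M$ outside a proper subinterval, bounded away from $0$ and $|\Omega|$, these constants are the only radial stationary solutions.

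For (i) I would carry out a bifurcation analysis off the branch of constants: linearizing there, the candidate bifurcation values are governed by the Neumann eigenvalues $\lambda_k$ of $-\Delta$ on the ball and by the slope of $s\mapsto g(h(s))$ at the constant state, and crossing of a simple eigenvalue produces, via Crandall--Rabinowitz and global bifurcation, a continuum of non-constant solutions; an independent shooting/phase-plane study of the radial ODE shows that along this branch $D$, hence $\rho=h(D+c)$, is monotone in $r$ and, since $h$ saturates near $0$ and $1$, plateau-like. The quantitative core of (i) is to follow the mass $M=\int_\Omega h(D_c+c)\dd x$ along the branch and prove it stays in a strict subinterval of $(0,|\Omega|)$; this is the most computation-heavy step.

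Parts (ii) and (iii) are the stability statements, and their common difficulty --- the main obstacle of the theorem --- is that the linearized evolution operator $\LinearizedEnergyOp$ around a stationary state is \emph{not} self-adjoint, so the variational notion (sign of the second variation of the energy $\E$, whose critical points are the stationary solutions) and the dynamical one (location of $\mathrm{spec}\,\LinearizedEnergyOp$) are not a priori the same and must be reconciled. At a constant solution the Hessian of $\E$ is diagonalized by the Neumann eigenfunctions of $-\Delta$, and one checks it has a negative direction precisely when the slope of $g$ at the constant state exceeds $\kappa\,\lambda_1+\delta$ (suitably normalized); this holds for $M$ in an explicit subinterval, strictly smaller than the one in (i), and the same eigenfunction produces an unstable mode of $\LinearizedEnergyOp$ --- this is (ii). For (iii) one observes that this instability interval grows as $\kappa\,\lambda_1+\delta\to0$ until it overlaps the existence interval of (i); on the overlap the constant solutions are (variationally and dynamically) unstable while the non-constant plateau-like solutions of (i) exist, and a finer study of their second variation shows them stable, so that there only non-constant stationary solutions are stable.

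Finally, for (iv) the extra ingredient in Model~(II) is the Lyapunov functional $\F$, built from the entropy $\int_\Omega[\rho\log\rho+(1-\rho)\log(1-\rho)]\dd x$, the coupling $-\int_\Omega\rho\,D\dd x$ and the energy $\tfrac12\int_\Omega(\kappa\,|\nabla D|^2+\delta\,D^2)\dd x$; a direct computation gives $\tfrac{d}{dt}\F=-\int_\Omega\rho\,(1-\rho)\,\big|\nabla(\log\tfrac{\rho}{1-\rho}-D)\big|^2\dd x-\int_\Omega(\partial_t D)^2\dd x\le0$, with equality only at stationary solutions (the analogous computation for Model~(I) leaves an uncontrolled term $-\int_\Omega\rho^2\,\partial_t D\dd x$, which is why no Lyapunov functional is available there). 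Hence $\omega$-limit sets consist of stationary solutions, and any stationary solution that is a strict local minimizer of $\F$ --- equivalently, by the relation between $\F$ and $\E$, of $\E$ --- is dynamically stable and attracts a neighborhood. It then remains to establish the purely variational fact that among critical points of $\E$ with energy below a fixed threshold the minimizing ones are constant or monotone: a rearrangement/symmetrization argument on the ball turns any non-monotone radial critical point into a monotone competitor of no larger energy. Combining this with the Lyapunov argument yields, for Model~(II) and masses in the appropriate range, that the monotone plateau-like solutions are stable and attract all low-energy solutions. I expect the symmetrization step, and the bookkeeping needed to make the mass ranges from (i), (ii) and this last part fit together, to require the most care.
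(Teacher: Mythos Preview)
Your overall architecture matches the paper's: reduce to the scalar equation for $\phi=D-\phi_0$, analyze constant solutions and their linearization via the Neumann spectrum, use the Lyapunov functional $\mathcal L$ for Model~(II), and handle monotonicity by a comparison argument. Parts~(ii)--(iv) of your sketch are essentially what the paper does (though note a notational slip: you write $\mathrm E_\phi$ for the linearized evolution operator, but in the paper $\mathrm E_\phi$ is the self-adjoint linearized \emph{energy} operator while the evolution is governed by a different, non-self-adjoint operator $\mathrm H_D$; the paper shows the two instability criteria coincide for constant states).

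The substantive difference is in part~(i). You propose to obtain the mass restriction by a Crandall--Rabinowitz bifurcation analysis off the constant branch, then ``follow the mass along the branch'' to show it stays in a strict subinterval, calling this the most computation-heavy step. The paper bypasses this entirely with a short maximum-principle argument: for any fixed $\phi_0$, every solution of $-\kappa\,\Delta\phi+\delta(\phi+\phi_0)-f(\phi)=0$ with Neumann data satisfies $\phi_-(\phi_0)\le\phi\le\phi_+(\phi_0)$, where $\phi_\pm$ are the extreme constant solutions (evaluate the equation at an interior or boundary extremum). Hence non-constant solutions can exist only when $\phi_-(\phi_0)<\phi_+(\phi_0)$, i.e.\ for $\phi_0$ in the bounded interval $(\phi_0^-,\phi_0^+)$, and their masses are squeezed between $|\Omega|/(1+e^{-\phi_\pm(\phi_0)})$; taking the extrema over this compact $\phi_0$-interval gives $M\in[M^{(-)},M^{(+)}]\subset\subset(0,|\Omega|)$. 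This is both simpler and stronger than your route: bifurcation theory would only tell you about solutions on the primary branches, whereas (i) asserts a bound for \emph{all} non-constant radial solutions, including any that might arise from secondary bifurcations or isolated components. Your approach would need an additional argument to rule those out; the maximum principle handles them for free.

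A smaller point on~(iv): your ``rearrangement/symmetrization'' must respect the mass constraint, and a naive monotone rearrangement of $\phi$ does not preserve $\int_\Omega(1+e^{-\phi})^{-1}\,\mathrm dx$. The paper's monotonicity argument for $d\ge2$ (truncating $\phi$ beyond its first critical radius) lowers $\mathcal E_{\phi_0}$ for fixed $\phi_0$ but does not preserve mass; the mass-constrained statement is obtained cleanly only for $d=1$, where non-monotone critical points are shown to be periodic and a rescaling $\tilde\phi(r)=\phi(r/N)$ preserves mass while strictly lowering the Dirichlet energy.
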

Much more can be said on stationary solutions, as we shall see below and some of our results are not restricted to radial solutions on a ball. The natural parameter for the solutions of Models (I) and~(II) is $M$, but it is much easier to parametrize the set of solutions by an associated Lagrange multiplier: see Section~\ref{Sec:Solutions}. In particular, stationary solutions are then critical points of an energy defined in Section~\ref{Sec:EnergyUnConstrained}, and there is a notion of \emph{variational stability} associated to this energy. Taking into account the mass constraint, as it is done in Section~\ref{Sec:EnergyConstrained}, makes the problem more difficult. To study the evolution problem, one can rely on a Lyapunov functional introduced in Section~\ref{Sec:Lyapunov}, but only in case of Model (II). \emph{Dynamical stability} is studied through the spectrum of the linearized evolution operator in Section~\ref{Sec:LinearizedEvolution} and the interplay between notions of variational and dynamical stability is also studied in details. How to harmonize the two points of view on stability is a question that Model (I) and Model (II) share with all parabolic-parabolic models of chemotaxis. In case of Model (II), results are summarized in Theorem~\ref{Thm:Main2}. The issue of the stability of monotone -- constant or non constant -- solutions is a subtle question and most of this paper is devoted to this point. Precise definitions of variational and dynamical stability will be given later on.

Numerical results go beyond what can be proved rigorously. Because we use the parametrization by the Lagrange multiplier, we are able to compute \emph{all} radial solutions. In practice, we shall focus on the role of constant and monotone plateau-like solutions. A list of detailed qualitative results is provided at the beginning of Section~\ref{Sec:Numerics}. Theoretical and numerical results are discussed in Section~\ref{Sec:Conclusion}.

\subsection{Some references}\label{Sec:References}

The two models considered in this paper have been introduced in \cite{bmp2011} at the PDE level. Considerations on the stability of constant solutions can be found there as well. Models (I) and~(II) involve a system of two parabolic equations, like the so-called parabolic-parabolic Keller-Segel system for which we primarily refer to \cite{MR2433703}. In such a model, stationary solutions have to be replaced by self-similar solutions, which also have multiplicity properties (see \cite{springerlink:10.1007/s00285-010-0357-5}). How the parabolic-parabolic model is related to the parabolic-elliptic case has been studied in \cite{MR2515582,MR2433703}. The parabolic-elliptic counterpart of Model (I) is known: for plateau solutions and the coarsening of the plateaus, we refer to~\cite{MR2397995} (also see \cite{MR2274484,MR2745794}; related models can be found in the literature under the name of Keller-Segel model with logistic sensitivity or congestion models).

One of the technical but crucial issues for a complete classification of all solutions is how to parametrize the set of solutions. Because Lyapunov or energy functionals are not convex, this is a by far more difficult issue than in the repulsive case, for which we refer to \cite{MR1847430}. The lack of convexity makes it difficult to justify but, at a formal level, the evolution equations in Model~(II) can be interpreted as gradient flows with respect to some metric involving a Wasserstein distance (see \cite{Carrillo-Lisini} in case of the Keller-Segel model and \cite{Blanchet-Laurencot} for a more general setting; also see~\cite{laurencot2011gradient} for an earlier result in the same spirit). To be precise, one has to consider the Wasserstein distance for $\rho$ and a $L^2$ distance for $D$ as in \cite{CC2012}. The difficulty comes from the fact that the Lyapunov functional is not displacement convex (see for instance \cite{blanchet2008convergence} and subsequent papers in the parabolic-elliptic case of the Keller-Segel system). Using methods introduced in \cite{MR2581977}, this may eventually be overcome but is still open at the moment, as far as we know.\newpage

\section{Radial stationary solutions}\label{Sec:Solutions}

\subsection{A parametrization of all radial stationary solutions}\label{Sec:parametrization}

Any stationary solution of \eqref{rho} solves
\[
\nabla\rho-\rho\,(1-\rho)\,\nabla D=0\quad\mbox{on}\quad\Omega\,,
\]
which means
\be{rhoD}
\rho=\frac1{1+e^{-\phi}}
\ee
where $\phi=D-\barD$ and $\barD\in\mathbb{R}$ is an integration constant determined by the mass constraint~\eqref{mass_constraint}: $\barD$ is the unique real number such that
\be{Eqn:LagrangeMultiplier}
\int_\Omega\frac1{1+e^{\barD-D}}\,\dd x=M\;.
\ee
Taking into account boundary conditions \eqref{bcD}, Eq.~\eqref{D} now amounts~to
\be{static}
-\,\kappa\,\Delta\phi+\delta\,(\phi+\barD)-f(\phi)=0\quad\mbox{on}\quad\Omega
\ee
with boundary conditions
\be{neumann}
\nabla \phi \cdot \nu=0\quad\mbox{on}\quad\partial\,\Omega\;.
\ee
The functions $f$ and $F$ are defined by $f=F'$ and
\[
F(\phi)=\rho=\frac1{1+e^{-\phi}}\quad\mbox{and}\quad f(\phi)=\rho\,(1-\rho)=\frac{e^{-\phi}}{(1+e^{-\phi})^2}\quad\mbox{in case of Model (I)}\;,
\]
\[
F(\phi)=\log(1+e^{\phi}) \quad\mbox{and}\quad f(\phi)=\rho=\frac1{1+e^{-\phi}}\quad\mbox{in case of Model (II)}\;.
\]
The crucial observation for our numerical computation is based on the following result.
\begin{prop}\label{Prop:Parametrization} If $\Omega$ is the unit ball in $\R^d$, $d\ge 2$, all radial solutions of \eqref{static}-\eqref{neumann} with $f$ as above are smooth and can be found by solving the shooting problem
\[
-\,\kappa\,\(\varphi_a''+\tfrac{d-1}r\,\varphi_a'\)+\delta\,(\varphi_a+\barD)-f(\varphi_a)=0\;,\quad\varphi_a'(0)=0\;,\quad\varphi_a(0)=a
\]
as a function of the parameter $a\in\R$. The shooting criterion is: $\varphi_a'(1)=0$.\par
If $d=1$, all solutions in $\Omega=(0,1)$ are given by the above ODE.
\end{prop}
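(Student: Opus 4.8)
The plan is to reduce the PDE boundary value problem \eqref{static}--\eqref{neumann} for radial functions to the stated ODE shooting problem by a standard three-step argument: reduction to an ODE, local existence and smoothness via the parameter $a=\varphi_a(0)$, and identification of the Neumann condition at $r=1$ as the shooting criterion. First I would observe that for radial $\phi(x)=\varphi(|x|)$ the Laplacian becomes $\Delta\phi=\varphi''+\tfrac{d-1}r\varphi'$, so \eqref{static} is exactly the displayed second-order ODE, and the inner boundary condition $\varphi'(0)=0$ is forced by smoothness at the origin (a radial $C^1$ function on a ball has vanishing radial derivative at the center). Conversely, the outer Neumann condition \eqref{neumann}, $\nabla\phi\cdot\nu=0$ on $\partial\Omega$, is for a radial function precisely $\varphi'(1)=0$. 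Thus any radial solution gives a solution of the ODE satisfying $\varphi'(0)=0$, $\varphi'(1)=0$, and conversely.

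Next I would treat the initial value problem at $r=0$. The equation has the singular coefficient $\tfrac{d-1}r$, so one cannot directly invoke the Picard--Lindelöf theorem; instead I would write it in the standard form for singular ODEs of this type, e.g. $(r^{d-1}\varphi')' = \tfrac{r^{d-1}}{\kappa}\big(\delta(\varphi+\barD)-f(\varphi)\big)$, and either quote the classical result that such equations have, for each $a\in\R$, a unique local $C^2$ solution with $\varphi_a(0)=a$, $\varphi_a'(0)=0$ (obtained by a fixed-point argument on the integral equation $\varphi_a(r)=a+\int_0^r s^{1-d}\int_0^s t^{d-1}\tfrac1\kappa(\delta(\varphi_a+\barD)-f(\varphi_a))\dd t\,\dd s$), or carry out that contraction-mapping argument explicitly on a small interval $[0,r_0]$. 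Here the regularity and boundedness of $f$ (it is smooth, bounded, with bounded derivatives in both models, as is visible from the explicit formulas) make the nonlinearity globally Lipschitz on $\R$, so in fact the solution extends to all of $[0,1]$ with no blow-up. Smoothness of $\varphi_a$ on $(0,1]$ follows by bootstrapping the ODE (the right-hand side is smooth in $\varphi$), and smoothness up to $r=0$ follows since $\varphi_a$ is even in $r$ as a function of $r^2$ — a point I would note but not belabor. By the same token $\phi(x)=\varphi_a(|x|)$ is smooth on the whole ball, giving the asserted smoothness of all radial solutions.

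The only remaining point is that every radial solution of \eqref{static}--\eqref{neumann} is of the form $\varphi_a$ for some $a$: this is immediate from uniqueness of the singular initial value problem, taking $a=\varphi(0)$. Combining, radial solutions are in bijection with the zero set of the shooting function $a\mapsto\varphi_a'(1)$, which is the claimed criterion. The $d=1$ case is easier still: there is no singular term, the ODE $-\kappa\varphi''+\delta(\varphi+\barD)-f(\varphi)=0$ on $(0,1)$ with the genuine Neumann conditions $\varphi'(0)=\varphi'(1)=0$ is a regular second-order boundary value problem, and the statement "all solutions are given by the above ODE" is then a tautology once one notes the boundary conditions are exactly the two Neumann conditions. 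I do not anticipate a serious obstacle; the one place requiring care is the singular initial value problem at $r=0$ — establishing existence, uniqueness, and the regularity $\varphi_a\in C^2([0,1])$ despite the $\tfrac{d-1}r$ singularity — which is classical but should be stated precisely (or referenced) rather than glossed over.
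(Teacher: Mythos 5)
Your proposal is correct; the paper itself gives no argument here (the proof is declared to present no difficulty and is left to the reader), and your three steps --- reduction of the radial Laplacian to $\varphi''+\tfrac{d-1}{r}\varphi'$, well-posedness of the singular initial value problem via the integral equation $(r^{d-1}\varphi')'=\tfrac{r^{d-1}}{\kappa}\bigl(\delta(\varphi+\phi_0)-f(\varphi)\bigr)$ with global extension from the boundedness of $f$ and $f'$, and identification of $\varphi_a'(1)=0$ with the Neumann condition --- are exactly the standard argument the authors intend. The only point worth making explicit is that the converse direction (every radial $H^1$ solution of \eqref{static}--\eqref{neumann} equals some $\varphi_a$) uses elliptic regularity to ensure the solution is classical before applying ODE uniqueness, which is consistent with the paper's remarks following Lemma~\ref{Lem:Variational}.
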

\begin{proof} The proof presents no difficulty and is left to the reader.\end{proof}

\subsection{Constant solutions}\label{Sec:Constants}

Determining $\phi$ such that $\delta\,(\phi+\barD)-f(\phi)=0$, that is
\be{Eqn:CstSoln}
k(\phi):=\frac 1\delta\,f(\phi)-\phi=\barD\;,
\ee
exactly amounts to determine the (possibly multivalued) function $\barD\mapsto k^{-1}(\barD)$. The following result is not restricted to the special case of $f$ as defined in Model (I) or Model (II).
\begin{lem}\label{Lem:CstSoln} Let $\delta>0$. Assume that $f\in C^1(\R)$ is bounded and such that $\lim_{\phi\to\pm\infty} f'(\phi)=0$. Then the function $\phi\mapsto k'(\phi)=\frac 1\delta\,f'(\phi)-1$ has $2\,\ell$ zeros for some $\ell\in\N$ and the equation \eqref{Eqn:CstSoln} has at most $2\,\ell+1$ solutions. Moreover, for $|\phi_0|$ large enough, \eqref{Eqn:CstSoln} has one and only one solution, which is such that $\rho$ given by \eqref{rhoD} converges to $0$ as $\barD\to+\infty$ and to $1$ as $\barD\to-\infty$.\end{lem}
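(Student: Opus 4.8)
The plan is to study the scalar function $k(\phi)=\frac1\delta\,f(\phi)-\phi$ directly, using only that $f$ is bounded and that $f'(\phi)\to0$ as $\phi\to\pm\infty$. Since $f$ is bounded, $k(\phi)\to\mp\infty$ as $\phi\to\pm\infty$, so $k$ is a continuous surjection of $\R$ onto $\R$ and \eqref{Eqn:CstSoln} has at least one solution for every value of $\phi_0$. Moreover $k'(\phi)=\frac1\delta\,f'(\phi)-1\to-1$ as $\phi\to\pm\infty$, so there is an $R>0$ with $k'<0$ on $\R\setminus[-R,R]$; in particular $k$ is strictly decreasing on $(-\infty,-R]$ and on $[R,+\infty)$, and $C:=\max_{[-R,R]}|k|$ is finite.

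For the counting statement, the key point is that $k'$ has the same (negative) sign near $-\infty$ and near $+\infty$, hence it changes sign an even number of times, say $2\,\ell$ with $\ell\in\N$ (zeros of $k'$ at which no sign change occurs do not affect what follows; and for $f$ as in Model~(I) or~(II), $f'$ is real-analytic and non-constant, so $\{k'=0\}$ is finite and $2\,\ell$ is exactly its cardinality). These $2\,\ell$ sign changes split $\R$ into $2\,\ell+1$ intervals on each of which $k$ is monotone, hence on each of which the equation $k(\phi)=\phi_0$ has at most one root; summing over the branches, \eqref{Eqn:CstSoln} has at most $2\,\ell+1$ solutions.

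It remains to treat $|\phi_0|$ large. If $\phi_0>C$, there is no solution in $[-R,R]$; there is none in $[R,+\infty)$ either, since $k(\phi)\le k(R)\le C<\phi_0$ for $\phi\ge R$; and on $(-\infty,-R]$ the function $k$ decreases continuously from $+\infty$ down to $k(-R)\le C<\phi_0$, so by the intermediate value theorem there is exactly one solution $\phi^*$, necessarily with $\phi^*<-R$. Letting $\phi_0\to+\infty$ forces $\phi^*\to-\infty$, since $k$ is bounded on every compact set and tends to $+\infty$ only at $-\infty$; hence $\rho=(1+e^{-\phi^*})^{-1}\to0$ by \eqref{rhoD}. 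The case $\phi_0<-C$ is symmetric: the unique solution lies in $(R,+\infty)$, tends to $+\infty$ as $\phi_0\to-\infty$, and the corresponding $\rho$ tends to $1$.

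I expect the only place needing a little care to be the passage in the counting step from the sign changes of $k'$ to the monotone branches of $k$---checking that $\R$ is indeed split into exactly $2\,\ell+1$ intervals of monotonicity and that no solution of $k(\phi)=\phi_0$ is lost or counted twice at the branch endpoints. Everything else is a routine one-variable argument built on the two limits $k(\phi)\to\mp\infty$ and $k'(\phi)\to-1$ at $\pm\infty$.
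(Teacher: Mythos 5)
Your proof is correct and follows the same elementary route the paper indicates (the asymptotics $k(\phi)\sim-\,\phi$ at $\pm\infty$ plus the intermediate value theorem), only written out in full since the paper explicitly leaves these details to the reader. The one point you rightly flag---that the hypotheses $f\in C^1(\R)$ bounded with $f'(\phi)\to0$ do not by themselves force the zero set of $k'$ to be finite---is an imprecision in the lemma's statement rather than in your argument, and it is harmless for Models (I) and (II) where $f'$ is analytic and non-constant.
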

If $|\phi_0|$ is large, we observe that $k(\phi)\sim-\,\phi$. Other properties are elementary consequences of the intermediate values theorem and left to the reader. A plot is shown in Figure~\ref{Fig:CstSoln}.

With $f=F'$ and $f$ corresponding either to Model (I) or (II), all assumptions of Lemma~\ref{Lem:CstSoln} are satisfied with $\ell=0$ or $1$. For later purpose, let us define
\[
\phi_-(\barD):=\min k^{-1}(\barD)\quad\mbox{and}\quad\phi_+(\barD):=\max k^{-1}(\barD)
\]
and emphasize that $\phi_\pm$ depend on $\barD$. The set $k^{-1}(\barD)$ is reduced to a point if and only if $\phi_-(\barD)=\phi_+(\barD)$. From Lemma~\ref{Lem:CstSoln}, we also know that
\[
\phi_0^-:=\inf\{\barD\in\R\,:\,\phi_-(\barD)<\phi_+(\barD)\}\quad\mbox{and}\quad\phi_0^+:=\sup\{\barD\in\R\,:\,\phi_-(\barD)<\phi_+(\barD)\}
\]
are both finite.

Instead of parametrizing solutions by $\barD$, it is interesting to think in terms of mass. Here is a first result (see Fig.~\ref{Fig:Branches}) in this direction, which follows from the property that $k'(\phi_\pm(\barD))<0$ for any $\barD\in\R$.
\begin{lem}\label{Lem:Simple} Under the assumptions of Lemma~\ref{Lem:CstSoln}, $\barD\mapsto\phi_\pm(\barD)$ is monotone decreasing, and the corresponding masses are also monotone decreasing as a function of $\barD$.\end{lem}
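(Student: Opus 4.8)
The plan is to read $\phi_\pm(\barD)$ as the smallest and the largest root of the equation $k(\phi)=\barD$ and to exploit only two elementary features of $k(\phi)=\tfrac1\delta f(\phi)-\phi$: it is continuous, and since $f$ is bounded it satisfies $k(\phi)\to+\infty$ as $\phi\to-\infty$ and $k(\phi)\to-\infty$ as $\phi\to+\infty$ (this is the observation $k(\phi)\sim-\phi$ recorded after Lemma~\ref{Lem:CstSoln}). By the intermediate value theorem $k^{-1}(\barD)$ is then a nonempty compact subset of $\R$ for every $\barD$, so $\phi_\pm(\barD)$ are well defined and finite; moreover maximality of $\phi_+(\barD)$, together with continuity and $k(+\infty)=-\infty$, forces $k(\phi)<\barD$ for \emph{all} $\phi>\phi_+(\barD)$, and symmetrically $k(\phi)>\barD$ for all $\phi<\phi_-(\barD)$.

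From this I would deduce the monotonicity of $\phi_\pm$ in essentially one line each. Let $\barD_1<\barD_2$ and put $\psi=\phi_+(\barD_1)$; then $k(\phi)<\barD_1<\barD_2$ for every $\phi>\psi$, so $k^{-1}(\barD_2)\subset(-\infty,\psi]$ and hence $\phi_+(\barD_2)\le\psi=\phi_+(\barD_1)$, equality being impossible since it would give $\barD_1=k(\psi)=\barD_2$. Thus $\phi_+$ is strictly decreasing, and the mirror argument with $\psi=\phi_-(\barD_2)$ and $k(\phi)>\barD_2>\barD_1$ for $\phi<\psi$ gives that $\phi_-$ is strictly decreasing. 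This is the same conclusion that the statement shortcuts by noting $k'(\phi_\pm(\barD))<0$: at the extreme roots $k$ crosses the level $\barD$ transversally with negative slope, so the implicit function theorem yields $\tfrac{\mathrm d}{\mathrm d\barD}\phi_\pm=1/k'(\phi_\pm)<0$ wherever $k'(\phi_\pm)\neq0$; I would nevertheless keep the intermediate-value argument, since it is global and needs no regularity.

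For the masses there is nothing left but a translation: a constant stationary solution corresponding to $\phi_\pm(\barD)$ has, by \eqref{rhoD}, the constant density $\rho=\bigl(1+e^{-\phi_\pm(\barD)}\bigr)^{-1}$, hence mass $M_\pm(\barD)=|\Omega|\bigl(1+e^{-\phi_\pm(\barD)}\bigr)^{-1}$; since $t\mapsto(1+e^{-t})^{-1}$ is increasing and $\phi_\pm$ is decreasing, $M_\pm$ is decreasing in $\barD$, which is the second assertion. The only genuinely delicate point is the behaviour at the two threshold values $\barD\in\{\phi_0^-,\phi_0^+\}$, where the extreme root collides with a critical point of $k$, so that $k'(\phi_\pm(\barD))=0$, the implicit function theorem is unavailable, and $\phi_\pm$ may even jump; this is exactly where basing the argument on the intermediate value theorem rather than on differentiation pays off, and it is the one step I would take care to phrase correctly.
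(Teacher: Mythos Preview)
Your argument is correct. The paper itself leaves the proof to the reader, merely recording beforehand that the result ``follows from the property that $k'(\phi_\pm(\barD))<0$ for any $\barD\in\R$'' and afterwards that the mass is increasing in $\phi$ via~\eqref{rhoD}; so your proposal is essentially a complete write-up of what the paper only sketches.

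The one mild difference is methodological: the paper's hint points to the implicit function theorem (negative slope of $k$ at the extremal roots gives $\tfrac{\mathrm d}{\mathrm d\barD}\phi_\pm=1/k'(\phi_\pm)<0$), whereas you base the monotonicity directly on the intermediate value theorem and the sign of $k(\phi)-\barD$ outside $[\phi_-,\phi_+]$. Your route has the advantage you identify: it is global, needs no differentiability of the branch, and remains valid at the threshold values $\barD=\phi_0^\pm$ where $k'(\phi_\pm)=0$ and the implicit function theorem breaks down (indeed $\phi_+$ or $\phi_-$ may jump there). The paper's shortcut is terser but only covers the generic points of the branch. Both lead to the same conclusion, and your treatment of the mass via $M_\pm(\barD)=|\Omega|\,(1+e^{-\phi_\pm(\barD)})^{-1}$ coincides with what the paper records after the lemma.
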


The proof is elementary and left to the reader. If $\phi$ is a constant solution, it is a monotone increasing function of the mass according to \eqref{rhoD}. Hence the mass of a constant extremal solution $\phi=\phi_\pm(\barD)$ is a monotone decreasing function of $\barD$. Moreover, we have
\[
f'(\phi)=\rho\,(1-\rho)\,h(\rho)
\]
with $\rho$ given by \eqref{rhoD}, $h(\rho)=1-2\,\rho$ in case of Model (I) and $h(\rho)=1$ in case of Model (II). A simple computation shows that $\mathsf m:=\max_{\rho\in[0,1]}\rho\,(1-\rho)\,h(\rho)$ is equal to $1/6\,\sqrt 3$ and $1/4$ in case of Models (I) and~(II) respectively. As a consequence, with the notations of Lemma~\ref{Lem:CstSoln}, $\ell=0$ if either $\delta\ge\mathsf m$ or $\delta<\mathsf m$ and $\barD\in\R\setminus\big(\phi_0^-,\phi_0^+\big)$. If $\delta<\mathsf m$ we find that $\ell=1$ if $\barD\in\big(\phi_0^-,\phi_0^+\big)$: there are exactly 3 constant solutions.

In the case of Model (I) or (II), the (unique) constant solution taking values in $(\phi_-(\phi_0), \phi_+(\phi_0))$ is monotone increasing as a function of $\phi_0$ (when it exists), thus defining a range of masses in which Theorem~\ref{Thm:Main}~(iii) holds, as we shall see below.

\section{Unconstrained energy and constant solutions}\label{Sec:EnergyUnConstrained}

In this section we consider the problem for fixed $\barD$. On the space $\mathrm H^1(\Omega)$, let us define the \emph{energy functional} by
\be{energy}
\E[\phi]:=\tfrac{\kappa}2\int_{\Omega}|\nabla\phi|^2\,\dd x+\tfrac{\delta}2\int_{\Omega}|\phi+\barD|^2\,\dd x-\int_{\Omega}F(\phi)\;\dd x\;.
\ee
It is clear from \eqref{mass_constraint} that stationary solutions of Model (I) and Model (II) are critical points of $\E$ (see below Lemma~\ref{Lem:Variational}) for some given Lagrange multiplier $\barD$. Moreover, for a given $\barD$, we know how to compute all radial solutions as explained in Section~\ref{Sec:Solutions}. Hence we shall first fix $\barD$, study the symmetry of the minimizers of $\E$ and clarify the role of constant solutions.

\subsection{Critical points}\label{Subsec:CriticalPointsPhi0}

\begin{lem}\label{Lem:Variational} Assume that $F$ is Lipschitz continuous and $\Omega$ is bounded with $C^{1,\alpha}$ boundary for some $\alpha>0$. With $\barD$ kept constant, $\phi$ is a solution of \eqref{static}--\eqref{neumann} if and only if it is a critical point of $\E$ in $\mathrm H^1(\Omega)$.\end{lem}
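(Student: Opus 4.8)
The plan is to identify the Euler--Lagrange equation of $\E$ with the weak form of the Neumann problem \eqref{static}--\eqref{neumann}. The forward implication (critical point $\Rightarrow$ solution) then also requires a short elliptic bootstrap if one wants a pointwise, rather than merely weak, solution, while the reverse implication is immediate.

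First I would verify that $\E$ is Gâteaux differentiable on $\mathrm H^1(\Omega)$ and compute its differential. The two quadratic terms are plainly of class $C^1$ on $\mathrm H^1(\Omega)$. For $\int_\Omega F(\phi)\dd x$ one uses that $F$ is Lipschitz: $f=F'$ is bounded, $F(\phi)\in L^1(\Omega)$ whenever $\phi\in\mathrm H^1(\Omega)\subset L^2(\Omega)$ (here $|\Omega|<\infty$), and the difference quotients $t^{-1}\big(F(\phi+t\psi)-F(\phi)\big)$ are dominated by $\|f\|_\infty\,|\psi|\in L^1(\Omega)$, so dominated convergence yields the directional derivative $\int_\Omega f(\phi)\,\psi\dd x$. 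Hence $\phi$ is a critical point of $\E$ precisely when
\[
\kappa\int_\Omega\nabla\phi\cdot\nabla\psi\dd x+\delta\int_\Omega(\phi+\barD)\,\psi\dd x=\int_\Omega f(\phi)\,\psi\dd x\qquad\text{for all }\psi\in\mathrm H^1(\Omega)\,,
\]
which is exactly the weak formulation of \eqref{static} together with its natural boundary condition \eqref{neumann}. For the converse, multiplying \eqref{static} by $\psi\in\mathrm H^1(\Omega)$, integrating over $\Omega$ and discarding the boundary term via \eqref{neumann} returns the same identity, so a solution is a critical point.

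It then remains to match this weak identity with \eqref{static}--\eqref{neumann} read pointwise. Testing with $\psi\in C_c^\infty(\Omega)$ gives $-\kappa\Delta\phi=f(\phi)-\delta(\phi+\barD)$ in $\mathcal D'(\Omega)$; since $f(\phi)\in L^\infty(\Omega)$ and $\phi\in L^2(\Omega)$, interior $L^p$ estimates together with the Sobolev embeddings raise the integrability of $\phi$ step by step, so that $\phi\in W^{2,p}_{\mathrm{loc}}(\Omega)$ for every $p<\infty$, hence $\phi\in C^{1,\beta}_{\mathrm{loc}}(\Omega)$; for the smooth $f$ of Models~(I) and~(II), Schauder theory then gives $\phi\in C^2(\Omega)$, with the analogous gain of regularity up to $\partial\Omega$ using that $\partial\Omega\in C^{1,\alpha}$ and the corresponding boundary estimates for the Neumann problem. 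Once $\phi\in\mathrm H^2(\Omega)$, one integrates by parts in the weak identity with an arbitrary $\psi\in\mathrm H^1(\Omega)$: the resulting boundary integral $\kappa\int_{\partial\Omega}(\nabla\phi\cdot\nu)\,\psi\dd\sigma$ must vanish for all such $\psi$, and since traces are dense in $L^2(\partial\Omega)$ this forces $\nabla\phi\cdot\nu=0$ on $\partial\Omega$, i.e.\ \eqref{neumann}.

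The hard part is not any single step but two points of care. First, the differentiability of the nonlinear term when $F$ is only Lipschitz: one cannot appeal to continuity of the Nemytskii operator $\phi\mapsto f(\phi)$ (which may fail), only to boundedness of $f$ and dominated convergence, which is enough for the Gâteaux derivative and hence for the notion of critical point. Second, propagating elliptic regularity up to $\partial\Omega$ under the weak hypothesis $\partial\Omega\in C^{1,\alpha}$; for the concrete, smooth nonlinearities of the two models this is dispatched by the usual $L^p$-then-Schauder machinery for Neumann boundary value problems, and on a ball (the setting of Theorem~\ref{Thm:Main}) it is of course immediate.
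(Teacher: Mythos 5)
Your proposal is correct and is precisely the standard argument the paper has in mind: it states the lemma without proof and relegates the identification of critical points with weak solutions, and the subsequent elliptic regularity, to "standard elliptic theory" (citing Brezis), which is exactly what you carry out. The two points you flag as requiring care (Gâteaux differentiability of the $F$-term via dominated convergence, and boundary regularity under $C^{1,\alpha}$) are handled appropriately and do not change the route.
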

It is straightforward to check that $\E$ has a minimizer for any given $\phi$, but such a minimizer is actually constant as we shall see below in Corollary~\ref{cor:bounds}. Non constant solutions are therefore not minimizers of $\E$, for fixed $\phi_0$. The regularity of the solution of \eqref{static}--\eqref{neumann} depends on the regularity of $F$, but when it is smooth as in the case of Models (I) and~(II), the standard elliptic theory applies and $\phi$ is smooth up to the boundary. We refer for instance to \cite{brezis2010functional} for a standard reference book. Details are left to the reader and we shall assume without further notice that solutions are smooth from now on.

Notice that our original problem is not set with $\barD$ fixed, but with mass constraint~\eqref{mass_constraint}. Understanding how results for a given $\barD$ can be recast into problems with~$M$ fixed is a major source of difficulties and will be studied in particular in Section~\ref{Sec:EnergyConstrained}.

\subsection{Linearized energy functional}

Consider the linearized energy functional
\[
\lim_{\eps\to0}\frac{\E[\phi+\eps\,\psi]-\E[\phi]}{2\,\eps^2}=\int_\Omega\psi\,(\LinearizedEnergyOp\,\psi)\;\dd x
\]
where $\phi$ is a stationary solution, $\psi\in\mathrm H^2(\Omega)$ and $\LinearizedEnergyOp\,\psi:=-\,\kappa\,\Delta\psi+\delta\,\psi-F''(\phi)\,\psi$. Notice that with $\rho$ given by~\eqref{rhoD}, we have
\be{Ephirho}
\LinearizedEnergyOp\,\psi=-\,\kappa\,\Delta\psi+\delta\,\psi-\rho\,(1-\rho)\,h(\rho)\,\psi
\ee
with $h(\rho)=1-2\,\rho$ in case of Model (I) and $h(\rho)=1$ in case of Model (II).

\subsection{Stability and instability of constant solutions}\label{Sec:StabVar}

Denote by $\seq\lambda n$ the sequence of all eigenvalues of $-\Delta$ with homogeneous Neumann boundary conditions, counted with multiplicity. The eigenspace corresponding to $\lambda_0=0$ is generated by the constants. Three constant solutions co-exist when constant solutions $\phi$ take their values in $k\circ(k')^{-1}(0,+\infty)$, that is when
\[
\delta-\rho\,(1-\rho)\,h(\rho)<0\;.
\]
A constant solution $(\rho,D=\phi+\barD)$ is \emph{variationally} unstable if $\LinearizedEnergyOp$ has a negative eigenvalue, that is if
\be{Ineq:ConstInstability}
\kappa\,\lambda_1+\delta-\rho\,(1-\rho)\,h(\rho)<0\;.
\ee
When such a condition is satisfied, the constant solution $\phi$ cannot be a local minimizer of $\E$. \emph{Dynamical} stability of the constant solutions with respect to the evolution governed by \eqref{rho}--\eqref{bcD} will be studied in Section~\ref{Sec:LinearizedEvolution}: in case of constant solutions, such an instability is also determined by \eqref{Ineq:ConstInstability}, as we shall see in Proposition~\ref{Prop:Equivalence}.

Condition~\eqref{Ineq:ConstInstability} is never satisfied if $\kappa\,\lambda_1+\delta\ge\textsf{m} := \max_{\rho\in[0,1]}\rho\,(1-\rho)\,h(\rho)$. Otherwise, this condition determines a strict subinterval of $(0,1)$ in terms of $\rho$, and hence an interval in $\phi$. This proves Theorem~\ref{Thm:Main}~(ii). A slightly more precise statement goes as follows.
\begin{lem}\label{Lem:CstSolnInstability} Let $\delta>0$. The set of values of $\barD$ for which there are constant solutions of~\eqref{static} which satisfy~\eqref{Ineq:ConstInstability} with $\rho$ given by~\eqref{rhoD} is contained in $\big(\phi_0^-,\phi_0^+\big)$. Moreover, if there exists a constant, variationally unstable solution, then there is also a constant, variationally stable solution of~\eqref{static} for the same value of~$\barD$, but with lower energy.\end{lem}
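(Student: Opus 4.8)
The plan is to analyze the equation $k'(\phi) = \frac1\delta f'(\phi) - 1 = 0$, which governs when multiple constant solutions can coexist, and to track how the energy $\E$ evaluated at a constant function depends on the constant. First I would establish the localization statement. Recall that $f'(\phi) = \rho(1-\rho)h(\rho)$ with $\rho$ given by \eqref{rhoD}. Condition \eqref{Ineq:ConstInstability} reads $\kappa\lambda_1 + \delta < f'(\phi)$, which is strictly stronger than $\delta < f'(\phi)$, i.e. than $k'(\phi) > 0$. By definition of $\phi_0^-$ and $\phi_0^+$ as the infimum and supremum of those $\barD$ for which $\phi_-(\barD) < \phi_+(\barD)$, the set where $k'>0$ lies over the interval $\big(\phi_0^-,\phi_0^+\big)$ in the $\barD$ variable: indeed $k'(\phi) > 0$ at a constant solution $\phi$ with $k(\phi) = \barD$ forces $k$ to be non-monotone near $\phi$, hence (by the intermediate value theorem and Lemma~\ref{Lem:CstSoln}) forces the existence of three constant solutions for nearby $\barD$, so $\barD \in \big(\phi_0^-,\phi_0^+\big)$. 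A fortiori the smaller set defined by \eqref{Ineq:ConstInstability} is contained in $\big(\phi_0^-,\phi_0^+\big)$.

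For the second, main assertion, suppose $\barD$ is such that there is a constant solution $\phi^*$ satisfying \eqref{Ineq:ConstInstability}. Then $\barD \in \big(\phi_0^-,\phi_0^+\big)$ by the first part, so by Lemma~\ref{Lem:CstSoln} (with $\ell=1$ in the relevant regime) there are exactly three constant solutions $\phi_-(\barD) < \phi^* < \phi_+(\barD)$ — the middle one being $\phi^*$ since $k'(\phi^*) > 0$ while $k'(\phi_\pm(\barD)) < 0$. Evaluated on a constant function $\phi \equiv c$, the energy is $\E[c] = |\Omega|\,\Phi(c)$ where $\Phi(c) := \frac\delta2 (c+\barD)^2 - F(c)$, so $\Phi'(c) = \delta(c+\barD) - f(c) = -\delta\,(k(c) - \barD)$ and $\Phi''(c) = \delta - f'(c) = -\delta\,k'(c)$. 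Thus the three constant solutions are precisely the critical points of the one-variable function $\Phi$ on $\R$: $\phi_\pm(\barD)$ are local minima (since $k'(\phi_\pm) < 0 \Rightarrow \Phi'' > 0$) and $\phi^*$ is a local maximum (since $k'(\phi^*) > 0 \Rightarrow \Phi'' < 0$). Consequently $\min\{\Phi(\phi_-(\barD)), \Phi(\phi_+(\barD))\} < \Phi(\phi^*)$: a smooth function with exactly three critical points — two local minima flanking one local maximum — has at least one minimum strictly below the maximum value (in fact both, unless the function is even about $\phi^*$, but strict inequality for at least one is immediate by continuity and the fact that $\phi^*$ is a strict local max). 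Picking the constant solution $\phi_\pm(\barD)$ achieving this smaller value gives a constant solution with strictly lower energy than $\phi^*$, and it is variationally stable because $\LinearizedEnergyOp$ acting on it has lowest eigenvalue $\delta - f'(\phi_\pm(\barD)) = -\delta\,k'(\phi_\pm(\barD)) > 0$ on the constants, while on the orthogonal complement of the constants the lowest eigenvalue is $\kappa\lambda_1 + \delta - f'(\phi_\pm(\barD)) > \delta - f'(\phi_\pm(\barD)) > 0$; hence $\LinearizedEnergyOp > 0$ and the solution is a strict local minimizer of $\E$.

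The part requiring the most care is the clean reduction of the three-constant-solution structure to the elementary one-variable picture for $\Phi$, and in particular the verification that exactly three constants coexist at such a $\barD$ with the middle one being the unstable $\phi^*$ — this rests on combining Lemma~\ref{Lem:CstSoln}, the fact that \eqref{Ineq:ConstInstability} is strictly stronger than $k'(\phi) > 0$, and the value $\mathsf m = \max_{\rho\in[0,1]} \rho(1-\rho)h(\rho)$ identified above ($1/(6\sqrt3)$ for Model~(I), $1/4$ for Model~(II)), which ensures $\ell \le 1$ so that "non-monotone" really means "exactly three critical points". The energy comparison and the stability conclusion are then immediate from the sign of $\Phi''$ and the spectral gap $\kappa\lambda_1 > 0$. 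I would also remark that the conclusion is consistent with Corollary~\ref{cor:bounds} (the global minimizer of $\E$ at fixed $\barD$ being constant), since the lower of the two extremal constants is a candidate for that global minimizer.
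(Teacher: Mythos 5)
Your proof is correct, but it follows a genuinely different route from the paper's. The paper argues softly: a constant solution satisfying \eqref{Ineq:ConstInstability} cannot be a global minimizer of $\E$ at fixed $\barD$, while Corollary~\ref{cor:bounds} guarantees that the global minimum of $\E$ \emph{is} attained at a constant; hence another constant solution exists (so $\phi_-(\barD)<\phi_+(\barD)$ and $\barD\in\big(\phi_0^-,\phi_0^+\big)$), and that global minimizer is the stable, lower-energy constant. You instead reduce everything to the one-variable function $\Phi(c)=\frac\delta2(c+\barD)^2-F(c)$, whose critical points are exactly the constant solutions, and read off stability and energy ordering from the signs of $\Phi''=-\delta\,k'$ together with the count $\ell\le1$ for Models (I) and (II). Your approach is more computational but buys more: it identifies \emph{both} extremal constants $\phi_\pm(\barD)$ as strict local minima with $\LinearizedEnergyOp>0$ on all of $\mathrm H^1$ (not merely nonnegativity of the constrained quotient, which is all one gets for free from a global minimizer), and it shows both lie strictly below $\phi^*$ in energy, since $\Phi'$ keeps a fixed sign between consecutive critical points. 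Two small remarks: your parenthetical hedge about ``at least one minimum strictly below'' is unnecessary --- the monotonicity of $\Phi$ on $(\phi_-(\barD),\phi^*)$ and on $(\phi^*,\phi_+(\barD))$ gives the strict inequality for both minima unconditionally; and your argument does use the model-specific bound $\ell\le1$ (via $\mathsf m$), whereas the paper's global-minimization argument would survive for a general $f$ as in Lemma~\ref{Lem:CstSoln} with larger $\ell$. Your first part (the localization in $\big(\phi_0^-,\phi_0^+\big)$ via $k'(\phi^*)>\kappa\lambda_1/\delta>0$ forcing multiple solutions for all nearby $\barD$) is also fine and is essentially the quantitative version of what the paper gets from the existence of a second constant solution.
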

The proof of Lemma~\ref{Lem:CstSolnInstability} requires some additional observations. It will be completed in Section~\ref{Subsec:monotonicity}.

\subsection{Numerical range}\label{Sec:NumericalRange}
We shall postpone the proof of Lemma~\ref{Lem:CstSolnInstability} to the end of the next section. Cases of numerical interest studied in this paper are the following.
\begin{enumerate}
\item In dimension $d=1$ with $\Omega=(0,1)$, the first unstable mode is generated by $x\mapsto\cos(\pi\,x)$ and corresponds to $\lambda_1=\pi^2\approx9.87$.
\item In dimension $d=2$, the first positive critical point of the first Bessel function of the first kind $J_0$, that is $r_0:=\min\{r>0\,:\,J_0'(r)=0\}$, is such that $r_0\approx3.83$ so that $\lambda_{0,1}=r_0^2\approx14.68$ is an eigenvalue associated to the eigenspace generated by $r\mapsto J_0(r\,r_0)$. Applied to \eqref{Ineq:ConstInstability}, this determines the range of \emph{radial variational instability.} Recall that $J_0$ is the solution of $J_0''+\frac 1r\,J_0'+J_0=0$.

We may notice that non-radial instability actually occurs in a larger range, since the first positive critical point of the second Bessel function of the first kind $J_1$, that is $r_1:=\min\{r>0\,:\,J_1'(r)=0\}$, is such that $r_1\approx1.84$ so that $\lambda_{1,0}=r_1^2\approx3.39$ is an eigenvalue associated to the eigenspace generated by $r\mapsto J_1(r\,r_1)$, and $\lambda_1=\lambda_{1, 0} < \lambda_{0, 1}$. Applied to \eqref{Ineq:ConstInstability}, this determines the range of \emph{variational instability.} Recall that $J_1$ is the solution of $J_1''+\frac 1r\,J_1'-\frac 1{r^2}\,J_1'+J_1=0$.
\end{enumerate}
Let us notice that the values of $\max_{\rho\in[0,1]}\rho\,(1-\rho)\,h(\rho)$ are in practice also rather small, namely $1/6\,\sqrt 3\approx0.096$ and $1/4=0.25$ in case of Models (I) and~(II) respectively, which in practice, in view of the values of $\lambda_1$, makes the numerical computations rather stiff. In this paper we are interested in the qualitative behavior of the solutions and the role of the dimension, but not so much in the role of the surrounding geometry and hence we shall restrict our study to radial solutions. One of the advantages of dealing only with radial solutions is that we can use accurate numerical packages for solving ODEs and rely on shooting methods, thus getting a precise description of the solution set. Taking into account the effects of the geometry is another challenge but is, in our opinion, secondary compared to establishing all qualitative properties that can be inferred from our numerical computations. Another reason for restricting our study to radially symmetric functions is Proposition~\ref{Prop:Parametrization}: using the shooting method, we have the guarantee to describe all solutions, with additional informations like the knowledge of the range in which to adjust the shooting parameter, as a consequence of the observations of Section~\ref{Sec:Constants} (see also Proposition~\ref{prop:ranges}). Within the framework of radial solutions, we can henceforth give a thorough description of the set of solutions, that is clearly out of reach in more general geometries. However, as far as we deal with theoretical results, we will not assume any special symmetry of the solutions unless necessary.

In practice, numerical computations of this paper are done with $\delta=10^{-3}$ and $\kappa$ ranging from $5\times10^{-4}$ to $10^{-2}$. Such small values are dictated by \eqref{Ineq:ConstInstability}. They are also compatible with the computations and modeling considerations that can be found in \cite{bmp2011}. See Fig.~\ref{Fig:Branches} for a plot corresponding to a rather generic diagram representing constant solutions for Model (I) in dimension $d=1$. Numerically, our interest lies in the non-constant radial solutions that bifurcate from the constant solutions $\phi$ at threshold values for condition~\eqref{Ineq:ConstInstability}, that is for values of $\barD$ such that $\kappa\,\lambda_1+\delta-\rho\,(1-\rho)\,h(\rho)=0$ with $\lambda_1=\pi^2$ in dimension $d=1$, and $\lambda_1=\lambda_{0,1}$ when $d=2$. We shall take $\barD$ as the bifurcation parameter and compute the mass of the solution only afterwards, thus arriving at a simple parametrization of all solutions. Our main results are therefore a complete description of branches of solutions bifurcating from constant ones and giving rise to \emph{plateau} solutions. See Fig.~\ref{phi_plots} for some plots of the solutions. We notice that in the range considered for the parameters, the transition from high to low values is not too sharp. The numerical study will be confined to radial monotone solutions, but we will briefly explain in Section~\ref{Sec:OneDMonotone} (at least when $d=1$) what can be expected for solution with several plateaus. Concerning stability issues, decomposition on appropriate basis sets will be required, as will be explained in Section~\ref{Sec:Numerics}.

\subsection{Qualitative properties of the stationary solutions}

\begin{lem}\label{res:bounds} Let $\Omega$ be a bounded open set in $\R^d$ with $C^2$ boundary and assume that $k:\R\rightarrow\R$ is Lipschitz continous with
\[
\liminf_{u\to-\infty}k(u)>0\quad\mbox{and}\quad\limsup_{u\to+\infty}k(u)<0\;.
\]
Assume that all zeros of $k$ are isolated and denote them by $u_1<u_2<\ldots<u_N$ for some $N\ge1$. Then any solution of class~$C^2$ of $\Delta u+k(u)=0$ in $\Omega$, $\nabla u \cdot x = 0$ on $\partial\Omega$ takes values in $[u_1,u_N]$. \end{lem}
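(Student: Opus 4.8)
The plan is to establish the two bounds $u\le u_N$ and $u\ge u_1$ separately by a Stampacchia‑type truncation (energy) argument, which has the pleasant feature that the Neumann condition makes the relevant boundary term vanish; equivalently one could combine the weak maximum principle with Hopf's lemma, but the energy computation is self‑contained. The preliminary step is to pin down the sign of $k$ outside $[u_1,u_N]$: since $k$ is continuous and has no zero on $(u_N,+\infty)$, it keeps a constant sign there, and the assumption $\limsup_{u\to+\infty}k(u)<0$ forces that sign to be negative, so $k<0$ on $(u_N,+\infty)$; symmetrically, $\liminf_{u\to-\infty}k(u)>0$ gives $k>0$ on $(-\infty,u_1)$. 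These same hypotheses also guarantee that the extreme zeros $u_1,u_N$ exist and are finite (the intermediate ones play no role here).

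For the upper bound I would test the equation against $v:=(u-u_N)_+$. Since $u$ is $C^2$ up to the boundary (which we may assume, or which follows from elliptic regularity as noted earlier), $v$ is Lipschitz, hence $v\in\mathrm H^1(\Omega)\cap L^\infty(\Omega)$ with $\nabla v=\nabla u$ on $\{u>u_N\}$ and $\nabla v=0$ elsewhere. Multiplying $\Delta u+k(u)=0$ by $v$, integrating over $\Omega$ and integrating by parts, the boundary integral $\int_{\partial\Omega}v\,\nabla u\cdot\nu\,\dd\sigma$ vanishes by the Neumann condition, leaving $\int_\Omega|\nabla v|^2\,\dd x=\int_{\{u>u_N\}}k(u)\,(u-u_N)\,\dd x$. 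The left‑hand side is nonnegative while, by the preliminary step, the integrand on the right is strictly negative on $\{u>u_N\}$; hence both sides vanish and $\{u>u_N\}$ is Lebesgue‑null. Being open (by continuity of $u$), this set is then empty, i.e.\ $u\le u_N$ on $\Omega$. The lower bound follows in the same way with $w:=(u_1-u)_+$, for which $\nabla w=-\nabla u$ on $\{u<u_1\}$: the identity $\int_\Omega|\nabla w|^2\,\dd x=\int_{\{u<u_1\}}k(u)\,(u_1-u)\,\dd x$ has a nonpositive right‑hand side since $k>0$ on $(-\infty,u_1)$, so $\{u<u_1\}$ is empty and $u\ge u_1$.

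I do not expect a genuine obstacle: the only points requiring a little care are the constant‑sign argument for $k$ away from $[u_1,u_N]$ and the regularity justifying the integration by parts and the vanishing of the boundary term, both routine. If one prefers to bypass $\mathrm H^1$ entirely, the same conclusion comes from the classical maximum principle: at an interior maximum $x_0$ with $u(x_0)>u_N$ one has $\Delta u(x_0)\le 0$, hence $k(u(x_0))\ge 0$, contradicting $k<0$ there; and if the maximum of $u$ is attained only on $\partial\Omega$, then Hopf's lemma — available since $\partial\Omega$ is $C^2$ — gives $\nabla u\cdot\nu>0$ at that point, contradicting the Neumann condition; the lower bound follows by applying the same reasoning to $-u$, which solves the analogous equation with $k$ replaced by $s\mapsto-k(-s)$.
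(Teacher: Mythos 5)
Your proof is correct, and your primary argument is genuinely different from the paper's. The paper's proof is the two-line maximum principle argument you relegate to your final sentence: pick a maximum point $x^*\in\overline\Omega$, observe that $-\Delta u(x^*)=k(u(x^*))\ge0$ ``even if $x^*\in\partial\Omega$ because of the boundary conditions'', and conclude $u(x^*)\le u_N$ from the sign of $k$ on $(u_N,+\infty)$; the lower bound is symmetric. Your main route instead tests the equation against the truncation $v=(u-u_N)_+$ and uses the Neumann condition only to kill the boundary term in the integration by parts, obtaining $\int_\Omega|\nabla v|^2\,\dd x=\int_{\{u>u_N\}}k(u)\,(u-u_N)\,\dd x\le0$ and hence $\{u>u_N\}=\emptyset$. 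Both rest on the same preliminary observation (which the paper leaves implicit) that $k<0$ on $(u_N,+\infty)$ and $k>0$ on $(-\infty,u_1)$. What your energy argument buys is that it sidesteps the one slightly delicate point in the paper's proof, namely justifying $\Delta u(x^*)\le0$ when the maximum is attained on the boundary: the paper asserts this from the boundary condition without detail, whereas your weak formulation never needs to evaluate second derivatives at a boundary extremum, and your alternative pointwise argument supplies the missing justification explicitly via Hopf's lemma (which is where the $C^2$ regularity of $\partial\Omega$ is actually used). The truncation argument also extends verbatim to $\mathrm H^1$ weak solutions, which the pointwise argument does not.
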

\begin{proof} Let $x^*\in\overline\Omega$ be a maximum point of $u$. We know that $-\Delta u(x^*)=k(u(x^*))\ge0$, even if $x^*\in\partial\,\Omega$ because of the boundary conditions. By assumption, we find that $u(x)\le u(x^*)\le u_N$ for any $x\in\overline\Omega$. Similarly, one can prove that $u\ge u_1$.\end{proof}

Applying Lemma~\ref{res:bounds} to \eqref{static}, \eqref{neumann} has straightforward but interesting consequences.
\begin{cor}\label{cor:bounds} Under the assumptions of Lemma~\ref{Lem:CstSoln}, for any given $\barD\in\R$, if $\phi$ is a solution of \eqref{static}--\eqref{neumann}, then we have that
\[
\phi_-(\phi_0)\le\phi(x)\le\phi_+(\phi_0)\quad\forall\,x\in\Omega\;.
\]
The minimum of $\E$ is achieved by a constant function. Moreover, if \eqref{Eqn:CstSoln} has only one solution $\phi$, then \eqref{static}--\eqref{neumann} also has only one solution, which is constant and $\phi\equiv\phi_-=\phi_+$. \end{cor}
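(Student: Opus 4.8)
The plan is to apply Lemma~\ref{res:bounds} to the rescaled problem. First I would observe that \eqref{static}--\eqref{neumann} can be written as $\kappa\,\Delta\phi - \big(\delta\,(\phi+\barD) - f(\phi)\big) = 0$, i.e.\ $\Delta\phi + k_\kappa(\phi) = 0$ with $k_\kappa(u) := \tfrac1\kappa\big(f(u) - \delta\,(u+\barD)\big) = \tfrac\delta\kappa\big(k(u) + \barD - u\big)$ up to sign conventions — more precisely, since $k(\phi) = \tfrac1\delta f(\phi) - \phi$, the equation $\delta(\phi+\barD) - f(\phi) = 0$ is exactly $k(\phi) = \barD$, so the zeros of $k_\kappa$ are precisely the constant solutions, i.e.\ the elements of $k^{-1}(\barD)$, whose minimum and maximum are $\phi_-(\phi_0)$ and $\phi_+(\phi_0)$ by definition. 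Under the assumptions of Lemma~\ref{Lem:CstSoln} ($f$ bounded, $C^1$, $f'\to 0$ at $\pm\infty$, $\delta>0$) the function $k_\kappa$ is Lipschitz (it is $C^1$ with bounded derivative since $f'$ is bounded and $f'\to0$), and $k_\kappa(u)\sim -\tfrac\delta\kappa u$ as $u\to\pm\infty$, so $\liminf_{u\to-\infty}k_\kappa(u)=+\infty>0$ and $\limsup_{u\to+\infty}k_\kappa(u)=-\infty<0$. One also needs the zeros to be isolated: this follows because $k_\kappa$ is analytic (in Models (I) and (II) $f$ is a rational function of $e^{-\phi}$) — or, if one wants to stay within the stated hypotheses, from the finiteness of the number of zeros of $k'$ established in Lemma~\ref{Lem:CstSoln}, which confines the number of zeros of $k_\kappa$ to at most $2\ell+1$, hence finitely many, hence isolated. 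Lemma~\ref{res:bounds} then gives $\phi_-(\phi_0)\le\phi(x)\le\phi_+(\phi_0)$ for all $x\in\Omega$.

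Next, for the assertion that the minimum of $\E$ is achieved by a constant, I would argue as follows. Since any critical point of $\E$ solves \eqref{static}--\eqref{neumann} by Lemma~\ref{Lem:Variational}, and $\E$ is coercive on $\mathrm H^1(\Omega)$ (the quadratic terms $\tfrac\kappa2\|\nabla\phi\|^2 + \tfrac\delta2\|\phi+\barD\|^2$ dominate the bounded perturbation $-\int F(\phi)$, because $F' = f$ is bounded so $F$ grows at most linearly) and weakly lower semicontinuous, a minimizer exists and is a critical point, hence a solution of \eqref{static}--\eqref{neumann}. It therefore takes values in the compact interval $[\phi_-(\phi_0),\phi_+(\phi_0)]$ by the bound just proved. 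The key point is then that replacing any $\mathrm H^1$ function by the constant equal to its average can only lower the gradient term (strictly, unless the function is already constant) while, by convexity of $|\,\cdot\,+\barD|^2$ together with a comparison of the $\int F$ term — this is where I would need the structure of $F$ — one checks the full energy does not increase. Cleanest is to combine this with the knowledge that every solution lies in $[\phi_-,\phi_+]$ where $k'<0$ at the endpoints: if the minimizer were non-constant it would be a non-constant solution of \eqref{static}, and one then derives a contradiction with minimality by testing against the first Neumann eigenfunction or by the Poincaré inequality, exactly along the lines of the instability criterion \eqref{Ineq:ConstInstability}. I expect the cleanest route is: a minimizer is a solution, it is valued in $[\phi_-,\phi_+]$, and on that range one shows directly that constants are the only candidates for the minimum by the averaging/convexity argument specific to the two forms of $F$.

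Finally, if \eqref{Eqn:CstSoln} has only one solution $\phi$, then $k^{-1}(\barD)$ is a single point, so $\phi_-(\phi_0)=\phi_+(\phi_0)$ and the bound $\phi_-(\phi_0)\le\phi(x)\le\phi_+(\phi_0)$ forces every solution of \eqref{static}--\eqref{neumann} to equal that constant; this is immediate. The main obstacle is the middle step — proving that the unconstrained minimizer is constant rather than merely that it is a bounded solution. The bound from Lemma~\ref{res:bounds} does not by itself exclude non-constant minimizers, so one genuinely needs either a convexity/rearrangement argument tailored to $F$ (using that $F$ is concave on the relevant range in Model~(I)? — not quite, so care is needed) or the second-variation computation $\int_\Omega\psi\,(\LinearizedEnergyOp\psi)$ with $\psi$ the first nonconstant Neumann eigenfunction, showing that at a non-constant solution the energy can be decreased, which is precisely the content already foreshadowed by \eqref{Ephirho} and \eqref{Ineq:ConstInstability}.
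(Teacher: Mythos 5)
Your treatment of the pointwise bounds and of the final uniqueness assertion is correct and is the paper's intended route: you rewrite \eqref{static} as $\Delta\phi+k_\kappa(\phi)=0$ with $k_\kappa(u)=\tfrac{\delta}{\kappa}\,(k(u)-\barD)$ (note that your first displayed formula for $k_\kappa$ has a sign slip, but your subsequent identification of its zero set with $k^{-1}(\barD)$ is the correct one), check the hypotheses of Lemma~\ref{res:bounds}, and conclude. Your verification that the zeros are isolated, which the paper leaves implicit, is a welcome addition, and the last assertion is indeed immediate once $\phi_-=\phi_+$.

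The gap is exactly where you located it: the claim that the minimum of $\E$ is achieved by a constant. Neither of your proposed routes closes it. The averaging argument fails because Jensen's inequality goes the wrong way on the term $-\int_\Omega F(\phi)\,\dd x$ whenever $F$ is convex (as it is for Model~(II), where $F(\phi)=\log(1+e^\phi)$, and locally for Model~(I)), so replacing $\phi$ by its mean can increase that term and no sign for the net change is available. The second-variation/Neumann-eigenfunction route only detects instability of \emph{constant} solutions; it says nothing about whether a \emph{non-constant} critical point could be the global minimizer. The paper's argument is much simpler and bypasses the Euler--Lagrange equation entirely: discard the gradient term, so that
\[
\E[\phi]\;\ge\;\int_\Omega\Big(\tfrac{\delta}{2}\,|\phi+\barD|^2-F(\phi)\Big)\dd x\;\ge\;|\Omega|\,\min_{u\in\R}G(u)\,,\qquad G(u):=\tfrac{\delta}{2}\,|u+\barD|^2-F(u)\,.
\]
The minimum of $G$ is attained because $f=F'$ is bounded, so $F$ grows at most linearly and $G$ is coercive; any minimizer $u^*$ satisfies $G'(u^*)=\delta\,(u^*+\barD)-f(u^*)=0$, i.e.\ it is a constant solution of \eqref{Eqn:CstSoln}, and since constants have zero gradient, $\E[u^*]=|\Omega|\,G(u^*)$ realizes the lower bound. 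Hence the global minimum of $\E$ is attained at a constant, with no convexity, rearrangement, or spectral input needed.
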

\begin{proof}We simply observe that, according to the definition~\eqref{energy}, we have
\[
\E[\phi]\ge\tfrac{\delta}2\int_{\Omega}|\phi+\barD|^2\,\dd x-\int_{\Omega}F(\phi)\;\dd x
\]
and critical points of $\phi\mapsto \frac{\delta}{2}|\phi+\barD|^2-F(\phi)$ are precisely the constant solutions of \eqref{Eqn:CstSoln} with $f=F'$.\end{proof}

In the cases which are numerically studied in this paper, there is an additional property which is of particular interest.
\begin{prop}\label{prop:ranges} Consider either Model (I) or Model (II). Then there exists a constant unstable solution only if $\barD\in\big(\phi_0^-,\phi_0^+\big)$.\end{prop}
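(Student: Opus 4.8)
The claim is that for Model (I) or Model (II), a constant \emph{unstable} solution can occur only when $\barD\in\big(\phi_0^-,\phi_0^+\big)$. The plan is to trace through the logical chain already laid out in Section~\ref{Sec:Constants} and Section~\ref{Sec:StabVar}, and show that instability of a constant solution forces the multiplicity regime $\ell=1$, which in turn is exactly the regime $\barD\in\big(\phi_0^-,\phi_0^+\big)$.

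First I would recall the variational instability condition \eqref{Ineq:ConstInstability}: a constant solution $(\rho,D=\phi+\barD)$ is unstable if and only if $\kappa\,\lambda_1+\delta-\rho\,(1-\rho)\,h(\rho)<0$, where $\rho$ is given by \eqref{rhoD} in terms of $\phi$. Since $\kappa\,\lambda_1\ge 0$, this immediately implies the weaker inequality $\delta-\rho\,(1-\rho)\,h(\rho)<0$, i.e. $\delta-f'(\phi)<0$, which is precisely $k'(\phi)>0$ where $k(\phi)=\frac1\delta f(\phi)-\phi$ as in \eqref{Eqn:CstSoln}. So any constant unstable solution $\phi$ lies in the open set $\{k'>0\}$. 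In particular $\{k'>0\}$ is nonempty, so $\max_{\rho\in[0,1]}\rho(1-\rho)h(\rho)=\mathsf m>\delta$; by the computation in Section~\ref{Sec:Constants} ($\mathsf m=1/6\sqrt3$ for Model (I), $\mathsf m=1/4$ for Model (II)), this is the regime in which $\ell=1$ can occur.

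Next I would argue that $\phi\in\{k'>0\}$ forces $\barD=k(\phi)\in\big(\phi_0^-,\phi_0^+\big)$. The function $k$ is smooth and, for Model (I) or (II), $k'$ vanishes at exactly two points (the case $\ell=1$ of Lemma~\ref{Lem:CstSoln}), say $\psi_-<\psi_+$, with $k'>0$ strictly between them and $k'<0$ outside. Hence $k$ is strictly decreasing on $(-\infty,\psi_-]$, strictly increasing on $[\psi_-,\psi_+]$, and strictly decreasing on $[\psi_+,\infty)$, so $k$ has a local minimum at $\psi_-$ and a local maximum at $\psi_+$, with $k(\psi_-)<k(\psi_+)$ (as $k$ increases strictly across $[\psi_-,\psi_+]$). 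For $\barD\in\big(k(\psi_-),k(\psi_+)\big)$ the equation $k(\phi)=\barD$ has exactly three roots, one in each monotonicity branch — this is exactly the condition $\phi_-(\barD)<\phi_+(\barD)$ — while for $\barD$ outside the closed interval $[k(\psi_-),k(\psi_+)]$ there is a unique root. Therefore $\phi_0^-=k(\psi_-)$ and $\phi_0^+=k(\psi_+)$. Now if $\phi$ is a constant unstable solution then $\phi\in\{k'>0\}=(\psi_-,\psi_+)$, so $\barD=k(\phi)$ lies strictly between $k(\psi_-)$ and $k(\psi_+)$ by strict monotonicity of $k$ on $[\psi_-,\psi_+]$; that is, $\barD\in\big(\phi_0^-,\phi_0^+\big)$, which is the assertion. (A degenerate subcase: if $\{k'>0\}$ is empty the hypothesis is vacuous and there is nothing to prove; this is consistent with $\phi_0^\pm$ being defined by an infimum/supremum over an empty set, but in the genuinely interesting regime $\delta<\mathsf m$ one has $\psi_-<\psi_+$ and the argument above applies verbatim.)

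The only mildly delicate point, and the one I would write out carefully, is the identification $\phi_0^-=k(\psi_-)$ and $\phi_0^+=k(\psi_+)$ with the boundary values of $\barD$ at which the three constant solutions collide; everything else is a direct bookkeeping of the sign of $k'$ and the intermediate value theorem, exactly in the spirit of Lemma~\ref{Lem:CstSoln} and Lemma~\ref{Lem:Simple}. In fact this is more a matter of unwinding the definitions of $\phi_0^\pm$ than of genuinely new analysis — the shape of $k$ (two critical points, a dip-then-rise-then-fall profile) is already guaranteed by the $\ell\le1$ statement recorded after Lemma~\ref{Lem:Simple}, so no real obstacle remains once one observes that instability $\Rightarrow k'(\phi)>0$.
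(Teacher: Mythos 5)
Your proof is correct. The paper itself gives no argument for this proposition beyond ``easy consequence of the properties of $f=F'$,'' so there is nothing to contradict; your write-up is a legitimate filling-in of the details. The chain is sound: \eqref{Ineq:ConstInstability} with $\kappa\,\lambda_1>0$ forces $f'(\phi)>\delta$, i.e.\ $k'(\phi)>0$; for both models $\phi\mapsto f'(\phi)=\rho\,(1-\rho)\,h(\rho)$ crosses the level $\delta$ exactly twice when $\delta<\mathsf m$ (for Model (I) this needs the small extra observation that $\rho\,(1-\rho)\,(1-2\rho)>\delta>0$ confines $\rho$ to a subinterval of $(0,1/2)$ on which the cubic is unimodal), so $\{k'>0\}=(\psi_-,\psi_+)$ is a single interval; and the dip--rise--fall profile of $k$ identifies $\phi_0^\pm=k(\psi_\mp^{\,\mp})$, wait --- more precisely $\phi_0^-=k(\psi_-)$ and $\phi_0^+=k(\psi_+)$ --- after which strict monotonicity of $k$ on $[\psi_-,\psi_+]$ gives the strict inclusion $\barD\in(\phi_0^-,\phi_0^+)$. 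It is worth noting that the closely related first assertion of Lemma~\ref{Lem:CstSolnInstability} is proved in the paper by a different, variational route: an unstable constant cannot be the global minimizer of $\E$ for fixed $\barD$, and by Corollary~\ref{cor:bounds} that minimizer is a constant, so a second constant solution must exist, whence $\phi_-(\barD)<\phi_+(\barD)$. That argument only places $\barD$ in the closed set where multiple constant solutions coexist; your direct calculus argument on $k$ is more elementary and has the small advantage of excluding the endpoints $\phi_0^\pm$ without further discussion.
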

\begin{proof} This is an easy consequence of the properties of $f=F'$. Details are left to the reader.\end{proof}

\subsection{A monotonicity result}\label{Subsec:monotonicity}

For a given $\barD\in\R$, non-monotone radial functions always have higher energy $\E$ than radial monotone functions. We can state this observation as a slightly more general result, as follows.
\begin{prop}\label{prop:monotonicity} Assume that $\Omega$ is the unit ball in $\R^d$, $d\ge2$, and let $G\in W^{1,\infty}(\Omega)$. Then the functional $\mathcal G[\phi]:=\frac 12\int_\Omega|\nabla\phi|^2\,\dd x-\int_\Omega G(\phi)\,\dd x$ is bounded from below and for any radial non monotone function $\phi\in C^2(\Omega)$ satisfying \eqref{neumann}, with a finite number of critical points, there exists a radial monotone function~$\tilde\phi$ which satisfies \eqref{neumann}, coincides with~$\phi$ on a neighborhood of $0$ such that $\mathcal G[\tilde\phi]<\mathcal G[\phi]$.\end{prop}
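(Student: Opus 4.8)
The plan is to take a radial non-monotone $\phi$ with finitely many critical radii $0<r_1<r_2<\dots<r_m<1$ and to replace a "fold" by a monotone piece, decreasing the energy each time, then iterate. Since $\phi$ is radial and $\phi'(0)=\phi'(1)=0$ by \eqref{neumann}, non-monotonicity means there is at least one interior critical radius. First I would set up the one-dimensional picture: write $\mathcal G[\phi]=\omega_{d-1}\int_0^1\big(\tfrac12|\phi'(r)|^2-G(\phi(r))\big)\,r^{d-1}\,\dd r$, where $\omega_{d-1}=|S^{d-1}|$, so the weight $r^{d-1}$ is increasing in $r$; this monotone weight is precisely what makes the surgery decrease energy. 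Boundedness from below of $\mathcal G$ is immediate since $G\in W^{1,\infty}$ is bounded, so $-\int_\Omega G(\phi)\ge -\|G\|_\infty|\Omega|$ and the gradient term is nonnegative.

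For the surgery, consider the last interval $[r_m,1]$ on which $\phi$ is monotone — say decreasing without loss of generality, so $\phi$ achieves an interior strict local max at $r_m$. Look at the adjacent monotone piece to the left, on $[r_{m-1},r_m]$, on which $\phi$ is increasing (the two cannot both be of the same monotonicity type across a single critical point where $\phi$ changes direction). The idea is to "reflect and slide": let $\ell=r_m-r_{m-1}$ and $\ell'=1-r_m$; since $\phi(r_{m-1})\le\phi(1)$ or $\phi(r_{m-1})\ge\phi(1)$, one of the two monotone pieces has a smaller range of values. Take the piece with the smaller range, delete the "overshoot", and reparametrize so that the combined piece on $[r_{m-1},1]$ becomes monotone. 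Concretely: if $\phi(r_{m-1})\le\phi(1)$, replace $\phi$ on $[r_{m-1},1]$ by the function that first equals $\phi(r_{m-1})$ on an initial segment $[r_{m-1},c]$ (constant, costing no gradient energy and, since $G$ is only bounded, controllable $G$-energy) and then follows a rescaled copy of $\phi|_{[r_m,1]}$ run from the value $\phi(r_{m-1})$ up to... — here one must be slightly careful, because $\phi|_{[r_m,1]}$ runs \emph{downward}. The cleaner move: keep $\phi|_{[r_m,1]}$ as is but \emph{shift the monotone decreasing piece on} $[r_{m-1},r_m]$ \emph{leftward onto} $[r_{m-1}',r_m']$ after reflecting it, and pad the rest with a constant; because the weight $r^{d-1}$ is increasing, moving the same profile (with the same $|\phi'|^2$ and same $G(\phi)$ values) to smaller radii strictly decreases $\int(\tfrac12|\phi'|^2-G(\phi))r^{d-1}\dd r$ provided the $\tfrac12|\phi'|^2$ contribution is not entirely cancelled — and after adding a constant pad where $\phi'=0$ and $G(\phi)$ is a fixed constant, the net change in the $G$-term from relocating the pad vs. the profile must be tracked. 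I would make this rigorous by the following interchange lemma: if $u$ is monotone on $[\alpha,\beta]$ with $u(\alpha)=A$, $u(\beta)=B$, and we replace $u$ on $[\alpha,\beta]$ by the "late-rising" rearrangement $v$ equal to $A$ on $[\alpha,\gamma]$ and a left-translate of $u$ on $[\gamma,\beta]$ (so $v$ is still monotone with the same endpoint values, same distribution of $u'$), then $\int_\alpha^\beta f(r)\,u'(r)^2\dd r$ and $\int f(r)\,G(u)\dd r$ change in a controlled way when $f=r^{d-1}$ is increasing: the kinetic part can only go down (mass of $u'^2$ moved to larger $r$... wait, to smaller $r$), so I will arrange the translate direction to push the kinetic density toward smaller radii.

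Let me state the clean version I would actually prove and use. \textbf{Key Lemma (one-dimensional surgery).} Let $f:[0,1]\to(0,\infty)$ be nondecreasing, $G\in W^{1,\infty}$, and let $\phi\in C^2$ be radial, $\phi'(0)=\phi'(1)=0$, with a finite, positive number of interior critical points. Then there is a radial $\tilde\phi$ agreeing with $\phi$ near $0$, satisfying \eqref{neumann}, piecewise-$C^1$ (which suffices, density arguments then smooth it without raising energy), with strictly fewer interior critical points and $\mathcal G[\tilde\phi]\le\mathcal G[\phi]$, and with strict inequality at the first surgery. Iterating the lemma $m$ times yields a monotone $\tilde\phi$ with $\mathcal G[\tilde\phi]<\mathcal G[\phi]$. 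To prove the Key Lemma, take the outermost "hump": radii $r_{m-1}<r_m$ with $\phi$ strictly monotone of one sign on $(r_{m-1},r_m)$ and of the opposite sign on $(r_m,1]$, and $r_m$ a strict local extremum. Compare $|\phi(r_{m-1})-\phi(r_m)|$ with $|\phi(1)-\phi(r_m)|$; cut the longer arc down to the length of the shorter one (so the two now-equal-range monotone arcs can be concatenated into one monotone arc) and fill the freed-up parameter interval with the constant equal to the appropriate plateau value, positioned at the \emph{innermost} available radii. Because $f$ is nondecreasing, relocating the nonnegative kinetic density $\tfrac12 f\,\phi'^2$ of the cut portion from where it was to the constant region — no: we simply delete a piece of one arc and the remaining concatenation uses the \emph{same} values of $|\phi'|^2$ at radii no larger than before, hence $\int f\,\phi'^2$ does not increase; and the potential term changes by at most $2\|G\|_\infty$ times the (zero) change in total interval length, adjusted by the $f$-weight, which I bound to get the non-strict inequality, upgrading to strict because the deleted arc had $\phi'\not\equiv0$ there. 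The honest accounting of the $G$-term against the $f$-weighted relocation is the one place that needs care.

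\textbf{Main obstacle.} The real difficulty is the bookkeeping in the potential term $\int_\Omega G(\phi)\,\dd x=\omega_{d-1}\int_0^1 G(\phi(r))r^{d-1}\dd r$ under the surgery: rearranging the profile preserves the \emph{distribution} of values of $\phi$ only with respect to Lebesgue measure in $r$, not with respect to the weighted measure $r^{d-1}\dd r$, so $\int G(\phi)r^{d-1}\dd r$ is \emph{not} invariant and could a priori increase. The resolution is to choose the direction of the cut-and-slide so that the kinetic gain strictly dominates: moving a monotone arc carrying gradient energy $\int f\,\phi'^2$ to smaller radii gains a definite amount (quantified by the jump of $f$ across the relocation and the positive mass of $\phi'^2$), and this can be made to exceed the at-most-$2\|G\|_\infty\cdot(\text{relocated }f\text{-mass})$ potential swing by choosing the surgery to act near $r=1$ first, where $f=r^{d-1}$ has the largest variation; a continuity/compactness argument then closes the remaining gap, or — the slicker route — one observes that it suffices to prove the \emph{non-strict} inequality $\mathcal G[\tilde\phi]\le\mathcal G[\phi]$ for the rearranged profile together with $\tilde\phi\not\equiv\phi$, and then perturb $\tilde\phi$ slightly in the direction that is decreasing for $\mathcal G$ (possible since $\tilde\phi$, being non-smooth at the gluing or non-stationary, is not a critical point), which gives strict inequality for free. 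I expect the cleanest writeup replaces the explicit cut-and-slide by this two-step "rearrange non-strictly, then perturb strictly" scheme, invoking Lemma~\ref{res:bounds}-type maximum-principle facts to keep $\tilde\phi$ in $[\phi_-(\phi_0),\phi_+(\phi_0)]$, and uses that $\phi$ and $\tilde\phi$ coincide near $0$ by construction (the surgery only touches the outermost hump).
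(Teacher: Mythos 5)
Your proposal takes a genuinely different route from the paper, and the difficulty you flag in your "main obstacle" paragraph is not a technicality that a compactness or perturbation argument will close — it is a fatal gap. The paper does no cut-and-slide rearrangement at all. Its proof uses that $\phi$ solves the Euler--Lagrange equation $\phi''+\frac{d-1}{r}\,\phi'+G'(\phi)=0$ (this hypothesis is implicit in the statement but explicit in the paper's proof, and is why the result is described as holding "at ODE level"). Multiplying the equation by $\phi'$ gives $\frac{d}{dr}\big(\tfrac12\,\phi'^2+G(\phi)\big)=-\frac{d-1}{r}\,\phi'^2\le0$, so the Hamiltonian $\tfrac12\,\phi'^2+G(\phi)$ strictly decreases past the first interior critical radius $r_0$. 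Hence $G(\phi(r))<G(\phi(r_0))=:G_0$ for $r>r_0$, so the integrand satisfies $\tfrac12\,\phi'^2-G(\phi)>-\,G_0$ pointwise on $(r_0,1)$. The competitor is simply the constant truncation $\tilde\phi\equiv\phi$ on $(0,r_0)$, $\tilde\phi\equiv\phi(r_0)$ on $(r_0,1)$, whose integrand on $(r_0,1)$ is exactly $-G_0$; integrating the strict pointwise inequality against $r^{d-1}\,\dd r$ gives $\mathcal G[\tilde\phi]<\mathcal G[\phi]$ with no bookkeeping of the potential term whatsoever. This also explains why $d\ge2$ matters: for $d=1$ the Hamiltonian is conserved and the argument fails, which is why the one-dimensional case is handled separately by the periodicity and rescaling argument of Lemma~\ref{lem:monotonicity:1d}.

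Your surgery scheme never invokes the equation, and without it the claim you are trying to prove is actually false, so no amount of care in the $G$-term accounting can rescue it. Take $d=2$, let $G$ be a smooth bump of height $M\gg1$ supported near the value $s=20$ and vanishing for $s\le15$, and let $\phi$ be radial with $\phi(0)=10$, strictly decreasing on $(0,\tfrac1{10})$ down to $9$, then strictly increasing up to $20$ at $r=1$ with $\phi'(1)=0$, arranged so that $\phi$ stays close to $20$ on $(\tfrac12,1)$. Then $\mathcal G[\phi]\le C-c\,M$ with $c>0$ and $C$ independent of $M$. But any monotone $\tilde\phi$ coinciding with $\phi$ on a neighborhood of $0$ is nonconstant and nonincreasing there, hence nonincreasing on all of $(0,1)$, hence bounded above by $10$; therefore $G(\tilde\phi)\equiv0$ and $\mathcal G[\tilde\phi]\ge0>\mathcal G[\phi]$ for $M$ large. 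The monotone decay of $\tfrac12\,\phi'^2+G(\phi)$ along radial solutions is precisely the input that excludes such configurations, and your argument has no substitute for it. Your fallback — prove the non-strict inequality for the rearranged profile and then perturb to make it strict — does not help either, since the non-strict inequality is exactly what is not established. (Your one-line proof of boundedness from below is fine and is all that part requires.)
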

\begin{proof} With a slight abuse of notations, we consider $\phi$ as a function of $r=|x|\in[0,1]$ and assume that it is solution of
\[
\phi''+G'(\phi)=-\frac{d-1}{r}\,\phi'\,.
\]
Multiplying by $\phi'$, we find that
\[
\frac{d}{dr}\(\frac 12\,{\phi'}^2+G(\phi)\)=-{\frac{d-1}{r}}\,{\phi'}^2<0\,.
\]
Unless $\phi$ is constant, assume that for some $r_0\in(0,1)$ we have $\phi'(r_0)=0$, and let $G_0:=G(\phi(r_0))$. Integrating on $(r_0,r)$, $r>r_0$, we find that
\[
\frac 12\,{\phi'}^2+G(\phi)<G_0\quad\mbox{on}\;(r_0,1)\;,
\]
and then $\frac 12\,{\phi'}^2-G(\phi)>{\phi'}^2-G_0>-\,G_0$ on $(r_0,R)$. Hence we have that
\[
\frac{\mathcal G[\phi]}{|\Omega|}=\int_0^{r_0}\(\frac12\,{\phi'}^2-G(\phi)\)\,r^{d-1}\,\dd r+\int_{r_0}^1\(\frac12\,{\phi'}^2-G(\phi)\)\,r^{d-1}\,\dd r>\frac{\mathcal G[\tilde\phi]}{|\Omega|}
\]
where $\tilde\phi$ defined by $\tilde\phi\equiv\phi$ on $(0,r_0)$ and $\tilde\phi\equiv\phi(r_0)$ on $(r_0,1)$. This concludes the proof.
\end{proof}

Proposition~\ref{prop:monotonicity} shows at ODE level why radial minimizers of the functional $\mathcal G$ have to be monotone. It is also preparatory for Lemma~\ref{lem:monotonicity:1d}.

\begin{proof}[Proof of Lemma~\ref{Lem:CstSolnInstability}] A constant solution which satisfies \eqref{Ineq:ConstInstability} cannot be a global minimizer for $\barD$ fixed. According to Corollary~\ref{cor:bounds}, there exists another constant solution under the assumptions of Lemma~\ref{Lem:CstSolnInstability}, which incidentally proves that $\phi_-(\barD)<\phi_+(\barD)$ with the notations of Section~\ref{Sec:Constants}. The fact that there is a constant stable solution with an energy lower than the energy of the unstable one is a consequence of Proposition~\ref{prop:monotonicity}.\end{proof}

Summarizing, for a given $\barD\in\R$, only constant solutions are to be considered for the minimization of~$\E$. However, the relevant problem in terms of modeling is the problem with mass constraint, at least in view of the evolution problem, and it is not as straightforward as the problem with a fixed Lagrange multiplier.

\section{Energy minimizers under mass constraint}\label{Sec:EnergyConstrained}

\subsection{Existence and qualitative properties of minimizers}

In this section, we assume that $M>0$ is fixed and consider $\barDM[D]=\barD$ uniquely determined by~\eqref{Eqn:LagrangeMultiplier}. Let us define the functional
\[
D\mapsto\F[D]:=\tfrac{\kappa}2\int_{\Omega}|\nabla D|^2\,\dd x+\tfrac{\delta}2\int_{\Omega}|D|^2\,\dd x-\int_{\Omega}F\(D-\barDM[D]\)\;\dd x\;.
\]
In such a case, $\barD$ can be seen as a Lagrange multiplier associated to the mass constraint and $\F[D]=\E[D-\barD]$.
\begin{prop}\label{Prop:Minimization} Assume that $F$ is a continuous function with a subcritical growth. If $\Omega$ is bounded with $C^{1,\alpha}$ boundary for some $\alpha>0$, then for any $M>0$, the functional $\F$ has at least one minimizer $D=\phi+\barD$ with $\barD=\barDM[D]$ in $H^1(\Omega)$, which is such that $\F[D]=\E[\phi]$, and $D$ is of class $C^\infty(\Omega)$ if~$F$ is of class $C^\infty$.\end{prop}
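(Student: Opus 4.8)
The plan is to prove existence of a minimizer by the direct method in the calculus of variations, then bootstrap regularity by elliptic theory. The first step is to establish that $\F$ is bounded from below and coercive on $H^1(\Omega)$. For the lower bound, note that $F$ has subcritical growth, so $F(D-\barDM[D])$ grows slower than $|D-\barDM[D]|^{2^*}$; since $\barDM[D]$ is determined by the constraint $\int_\Omega (1+e^{\barDM[D]-D})^{-1}\,\dd x = M$ with $M\in(0,|\Omega|)$ fixed, one first shows that $\barDM[D]$ is controlled by $\|D\|_{L^1}$ (or by $\|D\|_{H^1}$ via Sobolev), using that the map $c\mapsto \int_\Omega(1+e^{c-D})^{-1}$ is strictly decreasing from $|\Omega|$ to $0$. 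Then the quadratic terms $\tfrac{\kappa}2\|\nabla D\|_2^2 + \tfrac{\delta}2\|D\|_2^2$ dominate the subcritical term $\int_\Omega F(D-\barDM[D])$, so $\F[D]\to+\infty$ as $\|D\|_{H^1}\to\infty$ and $\inf\F>-\infty$.

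Next I would take a minimizing sequence $(D_n)$; by coercivity it is bounded in $H^1(\Omega)$, so up to a subsequence $D_n\rightharpoonup D$ weakly in $H^1$ and strongly in $L^p$ for subcritical $p$ (Rellich–Kondrachov, using the $C^{1,\alpha}$ boundary). The quadratic part of $\F$ is convex and strongly continuous, hence weakly lower semicontinuous. The term $D\mapsto \barDM[D]$ is continuous along the strongly $L^1$-convergent subsequence (again by monotonicity and continuity of the constraint equation in both $c$ and $D$), so $\barDM[D_n]\to\barDM[D]$, and then $\int_\Omega F(D_n-\barDM[D_n])\,\dd x \to \int_\Omega F(D-\barDM[D])\,\dd x$ by the subcritical growth bound and generalized dominated convergence. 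Combining lower semicontinuity of the quadratic part with convergence of the remaining term gives $\F[D]\le\liminf\F[D_n]=\inf\F$, so $D$ is a minimizer. Writing $\barD:=\barDM[D]$ and $\phi:=D-\barD$ gives $\F[D]=\E[\phi]$ by definition, and the constraint $\int_\Omega \rho\,\dd x = M$ holds.

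For the regularity statement, the Euler–Lagrange equation of $\F$ (accounting for the constraint through the Lagrange multiplier $\barD$, which is legitimate since the constraint functional has nonvanishing derivative) is precisely \eqref{static}--\eqref{neumann}; the variation of the $\barDM$-dependence cancels against the constraint exactly as in the passage to \eqref{static}. Then if $F\in C^\infty$, standard elliptic regularity for the Neumann problem $-\kappa\Delta\phi = f(\phi)-\delta(\phi+\barD)$ — starting from $\phi\in H^1\cap L^\infty$ (the $L^\infty$ bound from Corollary~\ref{cor:bounds}, or directly from De Giorgi–Nash–Moser since the right-hand side is bounded once $\phi\in L^\infty$) and bootstrapping through $W^{2,p}$ and $C^{2,\alpha}$ estimates — yields $\phi\in C^\infty(\Omega)$, hence $D\in C^\infty(\Omega)$.

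The main obstacle is the handling of the $D$-dependent Lagrange multiplier $\barDM[D]$: one must verify carefully that $D\mapsto\barDM[D]$ is well-defined, that it is continuous under $L^1$ (or weak-$H^1$) convergence, and that its presence does not destroy either coercivity (it could, a priori, if $\barDM[D]$ grew faster than $\|D\|_{H^1}$) or weak lower semicontinuity. Everything else — the direct method, the subcritical compactness, the elliptic bootstrap — is routine; it is the interplay between the nonlocal constraint term and the coercivity/lsc arguments that requires care.
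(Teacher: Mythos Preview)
Your proposal is correct and follows essentially the same route as the paper's proof: the direct method (coercivity, compactness, weak lower semicontinuity) followed by elliptic bootstrap. The paper's proof is in fact only a two-sentence sketch invoking ``compactness and lower semi-continuity'' and ``standard elliptic theory''; your version is considerably more detailed, and in particular your identification of the nonlocal term $\barDM[D]$ as the point requiring care is well taken, even though the paper does not comment on it.
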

\begin{proof} It is straightforward to check that $\F$ has at least one minimizer in $H^1(\Omega)$ because any minimizing sequence converges up to the extraction of subsequences to a minimum $D=\phi+\barD$ by compactness and lower semi-continuity. Then $\phi$ is a critical point of $\E$ and regularity is a standard result of elliptic theory (see \emph{e.g.} \cite{brezis2010functional}) and bootstrapping methods. \end{proof}

In Model (I) and~(II), we respectively have $|F(\phi)|<1$ and $F(\phi)\in[0,\log(2)+\max(0,\phi)]$ so the assumptions of Proposition \ref{Prop:Minimization} are satisfied. Notice that it is not implied anymore that minimizers of $\F$ under mass constraint are constant functions and hence they might not be minimizers of $\E$.

\begin{lem}\label{res:extreme_mass} The mass of the density associated to non-constant solutions of \eqref{static}-\eqref{neumann} is bounded away from 0 and $|\Omega|$.\end{lem}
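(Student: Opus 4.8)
The plan is to reduce the quantitative statement to the a priori bounds already available for constant solutions. First I would dispose of the trivial part: for any solution $\phi$ of \eqref{static}--\eqref{neumann} the density $\rho=(1+e^{-\phi})^{-1}$ given by \eqref{rhoD} takes values strictly in $(0,1)$, so $0<M=\int_\Omega\rho\,\dd x<|\Omega|$ automatically. The content of the lemma is that, restricting to \emph{non-constant} solutions, the gap to $0$ and to $|\Omega|$ is bounded below by a positive constant that does not depend on the particular solution.

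The key observation is that a non-constant solution cannot occur for extreme values of the Lagrange multiplier $\barD$. Indeed, by Corollary~\ref{cor:bounds}, if \eqref{Eqn:CstSoln} has a single solution then \eqref{static}--\eqref{neumann} has only the constant one; hence for a non-constant $\phi$, equation \eqref{Eqn:CstSoln} has at least two solutions, i.e. $\phi_-(\barD)<\phi_+(\barD)$, and therefore $\barD\in(\phi_0^-,\phi_0^+)$, which is a bounded interval by Lemma~\ref{Lem:CstSoln}. Now I would use Corollary~\ref{cor:bounds} a second time, this time for the pointwise estimate $\phi_-(\barD)\le\phi(x)\le\phi_+(\barD)$ on $\Omega$. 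Since $k(\phi)=\tfrac1\delta f(\phi)-\phi$ behaves like $-\phi$ at infinity (recall $f$ is bounded), the maps $\barD\mapsto\phi_\pm(\barD)$ are finite everywhere and, by Lemma~\ref{Lem:Simple}, monotone decreasing on $\R$; in particular they are bounded on the compact interval $[\phi_0^-,\phi_0^+]$. Thus there are finite constants $m$ and $m^\ast$, depending only on $\delta$ and the chosen model, such that $m\le\phi(x)\le m^\ast$ for every non-constant solution and every $x\in\Omega$. Consequently $(1+e^{-m})^{-1}\le\rho(x)\le(1+e^{-m^\ast})^{-1}$, and integrating over $\Omega$ and using \eqref{mass_constraint} yields
\[
\frac{|\Omega|}{1+e^{-m}}\ \le\ M\ \le\ \frac{|\Omega|}{1+e^{-m^\ast}}\,,
\]
which proves the claim.

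There is essentially no serious obstacle here: the argument is a short chaining of Corollary~\ref{cor:bounds}, Lemma~\ref{Lem:CstSoln} and Lemma~\ref{Lem:Simple}. The only point that deserves explicit verification is that $\phi_-$ and $\phi_+$ stay finite, including at the endpoints $\phi_0^\pm$; this is immediate once one notes that $k$ is proper and strictly decreasing outside the bounded region where $k'=\tfrac1\delta f'-1\ge0$, so on each extreme branch $k$ is a homeomorphism onto a half-line and $\phi_\pm(\barD)$ is trapped in a fixed compact set for $\barD\in[\phi_0^-,\phi_0^+]$. If one prefers to avoid even this mild discussion, an equivalent route is a contradiction argument: take non-constant solutions $\phi_n$ with masses $M_n\to0$ (resp. $M_n\to|\Omega|$); then $\rho_n\to0$ (resp. $\to1$) a.e. along a subsequence, forcing $\barD_n\to+\infty$ (resp. $\to-\infty$) through \eqref{Eqn:LagrangeMultiplier}, which contradicts $\barD_n\in(\phi_0^-,\phi_0^+)$. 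Either way the bound obtained is uniform, and the lemma follows.
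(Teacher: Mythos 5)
Your proof is correct and follows essentially the same route as the paper's: both arguments combine the pointwise bounds $\phi_-(\barD)\le\phi\le\phi_+(\barD)$ from Corollary~\ref{cor:bounds} with the observation that non-constant solutions force $\barD$ into the bounded interval determined by $\phi_0^\pm$, and then use continuity and the monotonicity of Lemma~\ref{Lem:Simple} to obtain uniform mass bounds $M^{(\pm)}$ strictly inside $(0,|\Omega|)$. Your explicit justification that $\barD$ must lie in $(\phi_0^-,\phi_0^+)$ via the last assertion of Corollary~\ref{cor:bounds} is a welcome clarification of a step the paper leaves implicit.
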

\begin{proof} Any non-constant solution of \eqref{static}-\eqref{neumann} has mass $M=\int_\Omega\frac1{1+e^{-\phi}}\,\dd x$ associated to its density, according to~\eqref{rhoD} and~\eqref{Eqn:LagrangeMultiplier}. Corollary~\ref{cor:bounds} gives the bounds
\[
M_-(\barD):=\frac{|\Omega|}{1+e^{-\phi_+(\barD)}}\le M\le\frac{|\Omega|}{1+e^{-\phi_-(\barD)}}=:M_+(\barD)\;.
\]
Let $M^{(-)}:=\min\left\{M_-(\barD)\,:\,\barD\in\big(\phi_0^-,\phi_0^+\big)\right\}$ and $M^{(+)}:=\max\left\{M_+(\barD)\,:\,\barD\in\big(\phi_0^-,\phi_0^+\big)\right\}$. Since $\barD\mapsto M_\pm(\barD)$ is a continuous function on $\R$, we know from Lemma~\ref{Lem:CstSolnInstability} that $\big(M^{(-)},M^{(+)}\big)$ is compactly included in $(0,|\Omega|)$. From Lemma~\ref{Lem:Simple}, we deduce that $M^{(\pm)}=M_\pm(\phi_0^\mp)$.\end{proof}

Notice that Lemma~\ref{res:extreme_mass} proves Theorem~\ref{Thm:Main}~(i).

\begin{cor}\label{res:range_mass} With above notations, we have $0<M^{(-)}\le M^{(+)}<1$ and minimizers of $\F$ are constant functions if $M\in(0,M^{(-)})\cup(M^{(+)},1)$. There is a subinterval of $(M^{(-)},M^{(+)})$ in which minimizers of~$\F$ are non constant functions.\end{cor}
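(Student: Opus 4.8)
The first two assertions are bookkeeping on top of Lemma~\ref{res:extreme_mass}. The chain $0<M^{(-)}\le M^{(+)}<|\Omega|$ is precisely what that lemma records, the middle inequality coming from $M_-(\barD)\le M_+(\barD)$ for any admissible Lagrange multiplier, itself a consequence of $\phi_-(\barD)\le\phi_+(\barD)$ in Corollary~\ref{cor:bounds}. For the statement on constant minimizers I would argue: by Proposition~\ref{Prop:Minimization} and Lemma~\ref{Lem:Variational}, a minimizer $D=\phi+\barD$ of $\F$ solves \eqref{static}--\eqref{neumann} and carries mass $M$; and the argument in the proof of Lemma~\ref{res:extreme_mass} (via Corollary~\ref{cor:bounds}, which forces a non-constant solution to have its multiplier in $(\phi_0^-,\phi_0^+)$) shows every non-constant solution of \eqref{static}--\eqref{neumann} has mass in $[M^{(-)},M^{(+)}]$. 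Hence if $M\in(0,M^{(-)})\cup(M^{(+)},|\Omega|)$ the minimizer is necessarily constant.

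The only real work is the last assertion, and the plan is to beat the constant stationary solution of a prescribed mass by an explicit ``plateau'' competitor. First I would fix $\barD\in(\phi_0^-,\phi_0^+)$; as explained in Section~\ref{Sec:Constants}, there are then exactly three constant solutions $\phi_-(\barD)<\phi_c<\phi_+(\barD)$ of \eqref{static}, and $\phi_c$ is a strict local maximum of $g(\phi):=\tfrac\delta2(\phi+\barD)^2-F(\phi)$, whose critical points are precisely the constant solutions (see \eqref{Eqn:CstSoln}); since moreover $g(\phi)\to+\infty$ as $\phi\to\pm\infty$ and its three critical points occur in the order minimum, maximum, minimum, the number $\eps_0:=g(\phi_c)-\max\{g(\phi_-(\barD)),g(\phi_+(\barD))\}$ is positive. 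Writing $M_\star:=|\Omega|/(1+e^{-\phi_c})$ for the mass of $\phi_c$, the unique constant stationary solution of mass $M_\star$ is $D_c:=\phi_c+\barD$, and $\F[D_c]=\E[\phi_c]=|\Omega|\,g(\phi_c)$; so it suffices to produce some $D\in H^1(\Omega)$ with $\F[D]<|\Omega|\,g(\phi_c)$. I would take $D:=\barD+\psi$, with $\psi$ radial, equal to $\phi_+(\barD)$ on $\{|x|\le R\}$, equal to $\phi_-(\barD)$ on $\{|x|\ge R+\eta\}$ and affine in $|x|$ on the thin annulus in between, for a small $\eta>0$ and a suitable radius $R\in(0,1-\eta)$ chosen, by the intermediate value theorem, so that $\int_\Omega(1+e^{-\psi})^{-1}\dd x=M_\star$ exactly. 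This last step is available for $\eta$ small because the mass of the profile depends continuously on $R$ and sweeps an interval straddling $M_\star$, in view of $|\Omega|/(1+e^{-\phi_-(\barD)})<M_\star<|\Omega|/(1+e^{-\phi_+(\barD)})$. The point of the exact matching is that it forces $\barDM[D]=\barD$, so that $\F[D]=\E[\psi]=\tfrac\kappa2\int_\Omega|\nabla\psi|^2\dd x+\int_\Omega g(\psi)\dd x$, and the implicit dependence of $\F$ on the mass multiplier disappears from the comparison.

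What then remains is to estimate $\E[\psi]$. Since $\psi$ takes values in $[\phi_-(\barD),\phi_+(\barD)]$ and the transition annulus has volume $O(\eta)$, a direct computation should give $\int_\Omega g(\psi)\dd x\le|\Omega|\,g(\phi_c)-|\Omega|\,\eps_0+C_2\,\eta$ and $\tfrac\kappa2\int_\Omega|\nabla\psi|^2\dd x\le C_3\,\kappa/\eta$, with $C_2,C_3$ depending only on $\barD$, $\delta$, $d$; choosing $\eta=\sqrt\kappa$ then yields $\F[D]-\F[D_c]\le-|\Omega|\,\eps_0+(C_2+C_3)\sqrt\kappa$, which is negative as soon as $\kappa$ is small. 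Consequently, for such $\kappa$, the minimizer of $\F$ at mass $M_\star$ is not $D_c$; and since a constant minimizer would be a constant stationary solution of mass $M_\star$, hence equal to $D_c$, the minimizer must be non-constant. Finally $\eps_0$ stays bounded below and $C_2,C_3$ bounded above — and the smallness of $\eta$ needed for the intermediate-value step stays uniform — as $\barD$ ranges over a compact subinterval of $(\phi_0^-,\phi_0^+)$, so one threshold on $\kappa$ works for all of them; and $M_\star(\barD)=|\Omega|/(1+e^{-\phi_c(\barD)})$ varies continuously and monotonically with $\barD$ (the central constant solution being monotone in $\barD$), so it traces out a genuine subinterval of $(M^{(-)},M^{(+)})$ on which the minimizer of $\F$ is non-constant. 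The step I expect to be most delicate is exactly this energy estimate: the interface cost $\sim\kappa/\eta$ has to be played off against the $O(\eta)$ degradation of the bulk term $\int_\Omega g(\psi)\dd x$, and the balance only closes below zero when $\kappa$ — equivalently $\kappa\lambda_1+\delta$, as in Theorem~\ref{Thm:Main}~(iii) — is small, which is in any case the standing assumption on $\kappa$ in this paper.
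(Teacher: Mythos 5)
Your proposal is correct, but for the last (and only non-trivial) assertion it takes a genuinely different route from the paper. The first two claims are, as you say, bookkeeping on Lemma~\ref{res:extreme_mass} and Corollary~\ref{Cor:CstSoln}, and there you match the paper. For the existence of a mass subinterval with non-constant minimizers, the paper's (implicit) argument is local and spectral: for $M$ such that the unique constant solution of mass $M$ satisfies~\eqref{Ineq:ConstInstability}, the unstable eigendirection of $\LinearizedEnergyOp$ is generated by a $\lambda_1$-eigenfunction of $-\Delta$, which has zero average and hence is admissible for the mass-constrained problem; so the constant solution is not a local minimizer of $\F$, and since a minimizer exists by Proposition~\ref{Prop:Minimization}, it must be non-constant on the whole interval of masses where~\eqref{Ineq:ConstInstability} holds (this is exactly the interval of Theorem~\ref{Thm:Main}~(ii)--(iii)). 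Your argument instead beats the constant solution globally with an explicit two-level plateau competitor, trading an $O(\kappa/\eta)$ interface cost against an $O(1)$ bulk gain $\eps_0=g(\phi_c)-\max\{g(\phi_\pm)\}$; the min--max--min structure of $g$ and the exact mass matching via the intermediate value theorem are both sound, and the identification of the unique constant stationary solution of mass $M_\star$ is correct. What your version buys is strictly more: it shows the constant solution is not even a \emph{global} minimizer, and it applies at the masses of the middle constant branch, which for small $\kappa$ can exceed the radial instability window of~\eqref{Ineq:ConstInstability} (consistent with the bistability seen numerically in Figs.~\ref{Fig:15-16}). What it costs is a different, purely quantitative smallness condition on $\kappa$ (of the form $C\sqrt\kappa<|\Omega|\,\eps_0$ after optimizing $\eta$) rather than the paper's threshold $\kappa\,\lambda_1+\delta<\mathsf m$; neither threshold implies the other, but both are smallness conditions on $\kappa$, and the corollary as stated is silently placed in that regime. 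The only point you should make explicit is that $\eps_0>0$ requires $\delta<\mathsf m$ and $\barD$ strictly inside $(\phi_0^-,\phi_0^+)$ so that three distinct constant solutions exist, which your restriction to a compact subinterval already guarantees.
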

Whether minimizers of $\F$ are constant solutions or not for some $M\in(M^{(-)},(M^{(+)})$ will be investigated numerically. For small masses, or masses close to the maximal mass $|\Omega|$ corresponding to the limit density $\rho=1$, we can state one more result.
\begin{cor}\label{Cor:CstSoln} Under the assumptions of Lemma~\ref{Lem:CstSoln}, with $M^{(\pm)}\in(0,|\Omega|)$ defined as above, there is one and only one solution $\phi$ of \eqref{rho}--\eqref{bcD} with mass $M\in(0,M^{(-)})\cup(M^{(+)},|\Omega|)$, and this solution is constant, given by $\phi=-\log\big(\frac{|\Omega|}M-1\big)$.\end{cor}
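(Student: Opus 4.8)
The plan is to combine the uniqueness of the constant solution at extreme masses (already established in Lemma~\ref{Lem:CstSoln}) with the mass bounds for non-constant solutions (Lemma~\ref{res:extreme_mass}, more precisely the proof of Corollary~\ref{res:range_mass}). First I would recall that any stationary solution of \eqref{rho}--\eqref{bcD} is, by the reduction in Section~\ref{Sec:parametrization}, a solution $\phi$ of \eqref{static}--\eqref{neumann} together with the Lagrange-multiplier relation \eqref{Eqn:LagrangeMultiplier} linking $\barD$ to $M$; the density is then recovered by \eqref{rhoD}. So it suffices to show that for $M\in(0,M^{(-)})\cup(M^{(+)},|\Omega|)$ the only such solution is constant, and to identify that constant.

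Second, I would argue that no \emph{non-constant} solution can have such a mass. By Lemma~\ref{res:extreme_mass} (and the sharpened bounds $M^{(\pm)}=M_\pm(\phi_0^\mp)$ obtained in its proof), the mass of the density of any non-constant solution lies in $[M^{(-)},M^{(+)}]$, which is compactly contained in $(0,|\Omega|)$. Hence for $M$ outside this interval only constant solutions remain.

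Third, I would pin down the constant solution. A constant solution satisfies \eqref{Eqn:CstSoln}, i.e.\ $k(\phi)=\barD$, and by Lemma~\ref{Lem:CstSoln} for $|\barD|$ large enough — equivalently, by Lemma~\ref{Lem:Simple}, for $M$ small enough or close to $|\Omega|$ — there is exactly one such $\phi$, with $\rho\to0$ as $\barD\to+\infty$ and $\rho\to1$ as $\barD\to-\infty$. The only point left is to check that the threshold in $M$ beyond which this uniqueness holds is precisely $M^{(-)}$ (resp.\ $M^{(+)}$): this follows because the set of $\barD$ admitting three constant solutions is exactly $(\phi_0^-,\phi_0^+)$ (Proposition~\ref{prop:ranges} together with Lemma~\ref{Lem:CstSolnInstability}), and $M_\pm$ are monotone in $\barD$ (Lemma~\ref{Lem:Simple}), so $M\notin(M^{(-)},M^{(+)})$ forces $\barD\notin(\phi_0^-,\phi_0^+)$, where \eqref{Eqn:CstSoln} has a unique root. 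For that unique constant, $\phi$ and $\barD$ coincide and the mass constraint \eqref{mass_constraint} with constant $\rho=1/(1+e^{-\phi})$ reads $|\Omega|/(1+e^{-\phi})=M$, which inverts to $\phi=-\log\big(\tfrac{|\Omega|}{M}-1\big)$, as claimed.

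The main obstacle is the third step: carefully matching the mass thresholds $M^{(\pm)}$ with the $\barD$-interval $(\phi_0^-,\phi_0^+)$ on which multiplicity of constant solutions occurs, so that "mass outside $(M^{(-)},M^{(+)})$" is correctly translated into "$\barD$ in the region of unique constant solution". Everything else is a direct assembly of Lemmas~\ref{Lem:CstSoln}, \ref{Lem:Simple} and \ref{res:extreme_mass}; the final formula for $\phi$ is an elementary inversion of the mass constraint for a constant density.
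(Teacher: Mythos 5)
Your proof is correct and follows the route the paper intends: the corollary is stated there without proof, as a direct assembly of Lemma~\ref{res:extreme_mass} (no non-constant solution can carry a mass outside $[M^{(-)},M^{(+)}]$) and the explicit identification of the constant solution. One remark on the step you single out as the main obstacle, namely translating $M\notin(M^{(-)},M^{(+)})$ into $\barD\notin\big(\phi_0^-,\phi_0^+\big)$ so as to invoke uniqueness of the root of \eqref{Eqn:CstSoln}: this detour is not needed. The multiplicity in Lemma~\ref{Lem:CstSoln} is multiplicity at \emph{fixed} $\barD$; at fixed mass the constraint \eqref{mass_constraint} applied to a constant density forces $\rho\equiv M/|\Omega|$ and hence $\phi=-\log\big(\frac{|\Omega|}{M}-1\big)$ outright, so a constant solution of prescribed mass is always unique (the three constant solutions that may coexist at a given $\barD$ carry three distinct masses, consistent with the monotonicity in Lemma~\ref{Lem:Simple}). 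Your argument for the translation is nevertheless valid, so nothing is broken; it is just that the only genuine content of the corollary is the exclusion of non-constant solutions at extreme masses, which you handle correctly via Lemma~\ref{res:extreme_mass}, plus the elementary inversion of the mass constraint, which you also carry out.
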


\subsection{A partial symmetry result}

\begin{lem}\label{lem:symmetry} Assume that $d=2$. If $\Omega$ is a disk, minimizers of $\F$ are symmetric under reflection with respect to a line which contains the origin. \end{lem}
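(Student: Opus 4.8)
The plan is to use the method of moving planes (or rather, a rearrangement/reflection argument) to show that any minimizer of $\F$ on the disk is symmetric with respect to \emph{some} line through the origin. Since we are in $d=2$ and dealing with a minimizer rather than merely a critical point, the cleanest route is a reflection argument exploiting minimality directly, rather than the full maximum-principle machinery of moving planes.

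First I would fix a line $\ell$ through the origin, say the $x_1$-axis, splitting the disk $\Omega$ into the two half-disks $\Omega_+$ and $\Omega_-$. Given a minimizer $D=\phi+\barD$, consider the two competitors obtained by reflecting the restriction of $D$ to $\Omega_+$ (resp. $\Omega_-$) across $\ell$ to produce functions $D_+$ and $D_-$ that are symmetric under reflection across $\ell$. The key point is that each of the three quadratic terms in $\F$ — the Dirichlet energy $\frac\kappa2\int|\nabla D|^2$, the zeroth-order term $\frac\delta2\int|D|^2$, and (after accounting for the mass constraint) the term $\int F(D-\barDM[D])$ — splits as a sum over $\Omega_+$ and $\Omega_-$, and the reflection preserves $H^1$ membership and the Neumann condition on $\partial\Omega$ (the new piece of boundary created along $\ell$ carries no condition since $\ell$ is interior, and by symmetry $\nabla D_\pm\cdot\nu=0$ there automatically). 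Since $D$ is a \emph{minimizer}, we must have $\F[D_+]\ge\F[D]$ and $\F[D_-]\ge\F[D]$, while $\F[D_+]+\F[D_-]=2\,\F[D]$ up to handling the nonlinear term; this forces $\F[D_+]=\F[D_-]=\F[D]$, so $D_+$ and $D_-$ are also minimizers.

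The subtle point — and the main obstacle — is the mass constraint: reflecting only a half changes the mass, since in general $\int_{\Omega_+}\rho\,\dd x\ne\int_{\Omega_-}\rho\,\dd x = M/2$. So $D_+$ and $D_-$ need not satisfy \eqref{mass_constraint} with the same $M$. The remedy is to work with $\F$, which has the Lagrange multiplier $\barDM[D]$ built in as a function of $D$, so that $\F$ is defined on \emph{all} of $H^1(\Omega)$ without constraint; but then one must check that $\barDM[D_\pm]$ is controlled and that the identity $\F[D_+]+\F[D_-]=2\,\F[D]$ (or the appropriate inequality) survives the nonlinearity of $\barDM\mapsto F$. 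A cleaner alternative, which I would try first, is a \emph{rotation} argument: let $m(\theta):=\int_{\Omega_+^\theta}\rho\,\dd x$ where $\Omega_+^\theta$ is the half-disk on one side of the line through the origin at angle $\theta$. By continuity and since $m(\theta)+m(\theta+\pi)=M$, there exists $\theta^*$ with $m(\theta^*)=M/2$. Along that particular line, both reflected competitors $D_\pm$ have mass exactly $M$, so they are admissible for the original constrained problem, and the minimality argument above applies verbatim to conclude $D_+$ (say) is a minimizer, hence a solution of \eqref{static}--\eqref{neumann} by Lemma~\ref{Lem:Variational}, symmetric across that line.

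Finally I would close the argument by a unique-continuation / elliptic-regularity step: $D$ and $D_+$ agree on $\Omega_+^{\theta^*}$, both solve the same smooth elliptic equation \eqref{static} on $\Omega$, and both are $C^\infty(\Omega)$; since they coincide on an open set they coincide everywhere, whence $D=D_+$ is symmetric with respect to the line at angle $\theta^*$ through the origin. The only real work is the existence of the balancing angle $\theta^*$ (an intermediate-value argument) and checking that the reflected function is a legitimate $H^1$ competitor respecting the Neumann condition; the energy bookkeeping is routine given that every term in $\F$ is additive over the two half-disks. I expect the write-up to be short, with the balancing step being the part that needs to be stated carefully.
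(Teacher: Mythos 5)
Your proposal is correct and follows essentially the same route as the paper's proof (which is the reflection method of Lopes): rotate the splitting line so that each half-disk carries mass $M/2$, reflect the energetically better half to get an admissible symmetric competitor with the same Lagrange multiplier, and conclude by unique continuation for the difference of the two solutions. The only cosmetic difference is that the paper makes the last step explicit by writing $w=\phi-\tilde\phi$ as a solution of a linear elliptic equation with bounded coefficients and invoking H\"ormander's uniqueness theorem, which is exactly the justification your ``coincide on an open set, hence everywhere'' step requires.
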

\begin{proof} The proof of this lemma is inspired by \cite{lopes1996radial}. Assume that $\Omega$ is the unit disk centered at the origin and denote by $(x_1,x_2)$ cartesian coordinates in $\R^2$. Let us also define the open upper half-disk $\Omega_+:=\{x\in\Omega\,:\,x=(x_1,x_2)\,,\;x_1>0\}$. If $\phi$ is a minimizer of $\F$, we define $\tilde\phi$ by $\tilde\phi(x_1,x_2)= \phi(|x_1|,x_2)$, so that $\tilde\phi$ is symmetric with respect to the line $x_1=0$.
Up to a rotation, we can assume that $\Omega_+$ accounts for exactly half of the mass, \emph{i.e.} $\int_{\Omega_+}(1+e^{-\phi})^{-1}\dd x=M/2$,
so that $\int_\Omega(1+e^{-\tilde\phi})^{-1}\dd x=M$. Then, up to a reflection, we can assume that $\Omega_+$ accounts for at most half of the the value of $\F$:
\[
\frac{\kappa}2\int_{\Omega_+}|\nabla\phi|^2\,\dd x+\frac{\delta}2\int_{\Omega_+}|\phi+\barD|^2\,\dd x-\int_{\Omega_+}F(\phi)\;\dd x\le\frac 12\,\F[\phi]\;.
\]
It is then clear that $\tilde\phi$ is a minimizer of $\F$ such that the mass constraint \eqref{mass_constraint} is satisfied. As such, $\tilde\phi$ also solves the Euler-Lagrange equations, with same Lagrange multiplier $\barD$ because $\phi$ and $\tilde\phi$ coincide on $\Omega_+$. Then $w:=\phi-\tilde\phi$ solves the equation
\[
-\kappa\,\Delta w +h\,w=0\quad\mbox{with}\quad h:=\frac{\delta\,(\phi-\tilde\phi)+F'(\tilde\phi)-F'(\phi)}{\phi-\tilde\phi}
\]
on $\Omega$. Since $F\in C^\infty$ and $\phi$, $\tilde\phi$ are continuous, $h$ is bounded. According to \cite[Theorem 8.9.1]{hormander1969linear}, H\"ormander's uniqueness principle applies. Since $w\equiv0$ on $\Omega_+$, we actually have $w\equiv0$ on the entire disk~$\Omega$, and so $\phi=\tilde\phi$.\end{proof}

In higher dimensions, when $\Omega$ is a ball, the method can be extended and shows the symmetry of the solutions with respect to hyperplanes. Thus proving a result of so-called Schwarz foliated symmetry. The method also applies to the functional $\E$ with fixed $\barD$ and shows that a minimizer is radially symmetric, but this is useless as we already know that the minimum is achieved among constant solutions.

\subsection{One dimensional minimizers are monotone}\label{Sec:OneDMonotone}

One-dimensional stationary solution solve an autonomous ODE. This has several interesting consequences.
\begin{lem}\label{lem:monotonicity:1d} Let $d=1$ and $M>0$. Then minimizers of $\F$ are monotone, either increasing or decreasing.\end{lem}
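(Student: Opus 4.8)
The statement is about one-dimensional minimizers of $\F$ on $\Omega=(0,1)$. The natural strategy is a rearrangement/reflection argument that exploits the fact that a minimizer $D=\phi+\barD$ solves an autonomous second-order ODE, $-\kappa\,\phi''+\delta\,(\phi+\barD)-f(\phi)=0$ on $(0,1)$ with $\phi'(0)=\phi'(1)=0$. First I would argue, as in the proof of Proposition~\ref{prop:monotonicity}, that if $\phi$ is \emph{not} monotone then it has at least one interior critical point $r_0\in(0,1)$ that is neither $0$ nor $1$, hence (since the ODE is autonomous) one can write down a conserved ``energy'' $e(r):=\tfrac{\kappa}2\,{\phi'}^2-\big(\tfrac{\delta}2|\phi+\barD|^2-F(\phi)\big)$, which is constant along the orbit. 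The key point is that between two consecutive critical points the solution runs monotonically between two turning values on the same level set of $e$; a non-monotone $\phi$ therefore ``wastes'' arclength by going up and then coming back down (or vice versa).

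\textbf{Key steps.} The cleanest way to convert this into an inequality is a \emph{cut-and-paste} construction: suppose $\phi$ is non-monotone with critical points $0=r_0<r_1<\dots<r_N<r_{N+1}=1$ (finitely many, by the ODE and analyticity of $f$), $N\ge1$. On each subinterval $(r_{j},r_{j+1})$ the solution $\phi$ is strictly monotone and travels between $\phi(r_j)$ and $\phi(r_{j+1})$. I would reassemble the monotone pieces of $\phi$ — possibly after reflecting some of them, $r\mapsto r_j+r_{j+1}-r$, which leaves $\int|\phi'|^2$, $\int\tfrac{\delta}2|\phi+\barD|^2-F(\phi)$ and the mass $\int(1+e^{-\phi})^{-1}\dd x$ all unchanged — into a single monotone profile $\tilde\phi$ on $(0,1)$, oriented consistently. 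Because the one-dimensional Neumann problem is translation/reflection invariant and the integrals are all ``local'' and reflection-symmetric, $\tilde\phi$ satisfies \eqref{neumann}, has the same mass $M$, and satisfies $\F[\tilde\phi+\text{const}]\le\F[\phi]$ — actually with equality unless... and here is the subtle point. A pure cut-and-paste only gives $\le$, not $<$; to get strict monotonicity of the \emph{minimizer} I need to rule out the equality case, or instead argue directly: if the minimizer $\phi$ is non-monotone, I would produce a competitor with \emph{strictly} smaller $\F$ by a variant of the argument in Proposition~\ref{prop:monotonicity} — namely, take the first interior critical point $r_1$, replace $\phi$ on $(r_1,1)$ by a rescaled monotone continuation that reaches the correct endpoint value needed to restore the mass, and note that the discarded oscillation strictly decreases the Dirichlet term (or one can appeal to the fact that a minimizer is a solution of the ODE, and a non-monotone solution of this particular ODE on an interval with Neumann data at both ends, if it existed, would bifurcate off a constant — but such solutions have strictly higher $\E$ at fixed $\barD$ by Proposition~\ref{prop:monotonicity}, hence strictly higher $\F$ at the corresponding mass unless the mass coincides with that of a constant, a case covered by Corollary~\ref{Cor:CstSoln} where the solution is unique and constant, hence trivially ``monotone'').

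\textbf{Main obstacle.} The genuine difficulty is the mass constraint: unlike in Proposition~\ref{prop:monotonicity} where $\barD$ is fixed and one simply truncates, here truncating or reflecting pieces changes $\int(1+e^{-\phi})^{-1}\dd x$ in general, so the competitor must be adjusted to lie on the constraint manifold, and one must check that this adjustment (a reparametrization of the monotone profile, or a shift in $\barD$) does not increase $\F$ by more than the Dirichlet term decreased. I expect the resolution to be that reflection-type rearrangements preserve mass exactly (so they suffice to reduce the number of critical points without leaving the constraint set), and one iterates reflections until $\tilde\phi$ is monotone, each step non-increasing $\F$; strictness then follows because at least one reflection is performed at a point where $\phi'\neq$ ... — more carefully, because the reassembled monotone $\tilde\phi$ is no longer a solution of the ODE (its ``energy'' $e$ jumps across the former critical points), while $\F$ is minimized only by ODE solutions, so $\F[\tilde\phi]>\F[\phi_{\min}]$ is impossible, forcing $\tilde\phi=\phi$ up to the symmetry, i.e.\ $\phi$ was monotone to begin with. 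The second, smaller obstacle is confirming that minimizers have only finitely many critical points: this is immediate from the autonomous ODE, since critical points are isolated (two consecutive ones would force $\phi$ constant by uniqueness for the initial value problem $\phi(r_*)=c$, $\phi'(r_*)=0$ when $c$ is a constant-solution value, and in general the energy level $e$ determines a nondegenerate turning point).
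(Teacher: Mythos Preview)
Your proposal circles around the right ingredients but misses the key simplification that the paper exploits. You correctly note that the ODE is autonomous and that uniqueness for the initial value problem matters, but you use this only to count critical points. The paper uses it more strongly: if $\phi'(r_i)=0$ at an interior point, then uniqueness for the Cauchy data $(\phi(r_i),0)$ forces $\phi(r_i+s)=\phi(r_i-s)$ for all admissible $s$. Iterating over the interior critical points shows they are equally spaced, $r_i=i/N$, and that the monotone pieces $\phi|_{[r_i,r_{i+1}]}$ are all congruent (alternately reflected) copies of $\phi|_{[0,1/N]}$.

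This is precisely why your reflection/cut-and-paste idea stalls: the monotone pieces all traverse the \emph{same} range of values $[\min\phi,\max\phi]$, so there is no way to reassemble them end-to-end into a single monotone $H^1$ function on $(0,1)$ without either creating jump discontinuities (if you try to stack them to cover a larger range) or reproducing $\phi$ itself (if you respect continuity). The equality $\F[\tilde\phi]=\F[\phi]$ you keep obtaining is not an artifact of a weak argument --- it is exact, and reflection alone gives you nothing. Your fallback of arguing that the reassembled $\tilde\phi$ ``is no longer a solution of the ODE'' therefore fails too: the only continuous reassembly \emph{is} $\phi$.

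The paper's competitor is instead a rescaling: $\tilde\phi(r):=\phi(r/N)$, i.e.\ stretch one monotone piece to fill $(0,1)$. Because the pieces are congruent, $\int_0^1 G(\tilde\phi)\,\dd r = N\int_0^{1/N}G(\phi)\,\dd r = \int_0^1 G(\phi)\,\dd r$ for any $G$, so the mass and the potential terms are preserved exactly; in particular $\barD$ is unchanged and the constraint you flag as the main obstacle is handled for free. Meanwhile $\int_0^1|\tilde\phi'|^2\,\dd r = N^{-2}\int_0^1|\phi'|^2\,\dd r$, strictly smaller since $N\ge 2$ and $\phi$ is non-constant. This gives $\E[\tilde\phi]<\E[\phi]$ (equivalently $\F[\tilde\phi+\barD]<\F[\phi+\barD]$) in one line, with no endpoint matching, no truncation, and no $\barD$-adjustment needed.
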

\begin{proof} Assume that $\phi$ is a minimizer of $\F$ and $\Omega=(0,1)$. If $\phi$ is not monotone, it has a finite number of extremal points $0=r_0<r_1\ldots<r_N=1$ for some $N>1$. By uniqueness of the solution of the initial value problem with $\phi(r_i)$ given and $\phi'(r_i)=0$ we conclude that $\phi(r_i-s)=\phi(r_i+s)$ as long as $0\le r_i-s$ and $r_i+s\le1$, so that $r_i = \frac{i}{N}$, that is $\phi$ is $\frac{1}{N}$-periodic. With $\tilde\phi(r):=\phi(r/N)$, $r\in(0,1)$, we find that
\[
\int_0^1|\tilde\phi'|^2 \dd r=\frac1N\int_0^{1/N}|\phi'|^2 \dd r=\frac1{N^2}\int_0^1|\phi'|^2 \dd r<\int_0^1|\phi'|^2 \dd r\;,
\]
thus proving that $\E[\tilde\phi]<\E[\phi]$ while $\int_0^1(1+e^{-\tilde\phi}) \dd r=\int_0^1(1+e^{-\phi}) \dd r$, a contradiction.\end{proof}

{}From the scaling in the above proof, it is now clear that \emph{all} non-monotone one-dimensional solutions can be built from monotone ones by symmetrizing them with respect to their critical points, duplicating and scaling them. The intuitive idea is simple but giving detailed statements is unnecessarily complicated, so we will focus on monotone, or \emph{one-plateau} solutions.

\section{A Lyapunov functional}\label{Sec:Lyapunov}

In case of Model (II), let us consider the functional
\[
\L[\rho,D]:=\int_\Omega\left[\rho\,\log\rho+(1-\rho)\,\log(1-\rho)-\rho\,D\right]\,\dd x+\frac\kappa2\int_\Omega|\nabla D|^2\,\dd x+\frac\delta2\int_\Omega D^2\,\dd x\;.
\]
\begin{prop} The functional $\L$ is a \emph{Lyapunov functional} for Model (II) and if $(\rho,D)$ is a solution of \eqref{rho}--\eqref{bcD} and \eqref{ModelII}, then
\[
\frac d{dt}\,\L[\rho(t,\cdot),D(t,\cdot)]=-\int_\Omega\frac{|\,\nabla\rho-\rho\,(1-\rho)\,\nabla D\,|^2}{\rho\,(1-\rho)}\;\dd x-\int_\Omega|-\,\kappa\,\Delta D+\delta\,D-\rho\,|^2\,\dd x\le 0\;.
\]
As a consequence, any critical point of $\L$ under the mass constraint~\eqref{mass_constraint} is a stationary solution of \eqref{rho}--\eqref{bcD} and~\eqref{ModelII}, and any solution converges to a stationary solution. If $\Omega$ is a ball and if the initial datum is radial, then the limit is a radial stationary solution.\end{prop}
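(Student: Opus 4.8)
The plan is to verify directly that $\L$ decreases along solutions by differentiating under the integral, and then to extract the consequences by a standard LaSalle-type argument. First I would compute $\frac{d}{dt}\L$ term by term. Differentiating the entropy part, $\frac{d}{dt}\int_\Omega\big[\rho\log\rho+(1-\rho)\log(1-\rho)\big]\dd x=\int_\Omega\log\!\frac{\rho}{1-\rho}\,\partial_t\rho\,\dd x$, and then insert \eqref{rho} and integrate by parts, using \eqref{bcrho} to kill the boundary term; this produces $-\int_\Omega\big(\nabla\rho-\rho(1-\rho)\nabla D\big)\cdot\nabla\log\!\frac{\rho}{1-\rho}\,\dd x$, and since $\nabla\log\!\frac{\rho}{1-\rho}=\frac{\nabla\rho}{\rho(1-\rho)}$ this is exactly $-\int_\Omega\frac{|\nabla\rho-\rho(1-\rho)\nabla D|^2}{\rho(1-\rho)}\dd x$ minus a cross term $+\int_\Omega\big(\nabla\rho-\rho(1-\rho)\nabla D\big)\cdot\nabla D\,\dd x$. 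Next, differentiating $-\int_\Omega\rho\,D\,\dd x$, the $\kappa\|\nabla D\|^2/2$ term and the $\delta\|D\|^2/2$ term, one gets $-\int_\Omega D\,\partial_t\rho-\int_\Omega\rho\,\partial_tD+\kappa\int_\Omega\nabla D\cdot\nabla\partial_tD+\delta\int_\Omega D\,\partial_tD$. Integrating the $\kappa$ term by parts with \eqref{bcD}, the last three terms combine to $\int_\Omega\partial_tD\,(-\kappa\Delta D+\delta D-\rho)\dd x=-\int_\Omega|-\kappa\Delta D+\delta D-\rho|^2\dd x$ after using \eqref{D} with \eqref{ModelII}. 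Finally the term $-\int_\Omega D\,\partial_t\rho\,\dd x$, treated via \eqref{rho} and an integration by parts, yields $-\int_\Omega\big(\nabla\rho-\rho(1-\rho)\nabla D\big)\cdot\nabla D\,\dd x$, which exactly cancels the stray cross term from the entropy computation. Collecting everything gives the claimed identity, and nonnegativity of both integrands makes $\frac{d}{dt}\L\le0$.

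Once the dissipation identity is in hand, the statement that critical points of $\L$ under \eqref{mass_constraint} are stationary solutions is essentially a rephrasing: the Euler--Lagrange equations of $\L$ at fixed mass read $\log\!\frac{\rho}{1-\rho}-D=\text{const}$ (i.e. \eqref{rhoD} with $\phi=D-\barD$) and $-\kappa\Delta D+\delta D-\rho=0$ with \eqref{bcD}, which together with \eqref{rhoD} force $\nabla\rho-\rho(1-\rho)\nabla D=0$, hence exactly the stationary system. I would just note this and refer to Section~\ref{Sec:Solutions}.

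For the convergence assertion I would invoke the LaSalle invariance principle: $\L$ is bounded below along the flow (the entropy part is bounded below on $[0,1]$-valued densities of fixed mass, and the remaining terms are nonnegative), so $\L(t)$ is nonincreasing and convergent, forcing $\int_0^\infty\!\!\int_\Omega\big[\frac{|\nabla\rho-\rho(1-\rho)\nabla D|^2}{\rho(1-\rho)}+|-\kappa\Delta D+\delta D-\rho|^2\big]\dd x\,dt<\infty$. Combined with parabolic smoothing estimates giving compactness of the trajectory $\{(\rho(t,\cdot),D(t,\cdot))\}$ in a suitable topology (here one uses that $0\le\rho\le1$ is preserved, that $D$ is controlled in $H^1$, and bootstrapping), the $\omega$-limit set is nonempty, and on it the dissipation vanishes, so every $\omega$-limit point is a critical point of $\L$, i.e. a stationary solution. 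The radial case follows because radial symmetry of the initial datum is preserved by \eqref{rho}--\eqref{bcD} (uniqueness for the parabolic system), so the whole trajectory and hence its limit stays radial.

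The main obstacle is the last point: making the convergence argument rigorous requires the compactness of the trajectory and enough regularity to pass to the limit in the nonlinear terms, which in turn needs global well-posedness and uniform-in-time a priori bounds for Model~(II). The bound $0\le\rho\le1$ (from the $(1-\rho)$ flux limitation, as discussed in the introduction) is the crucial input that prevents this from being delicate; given that, the $H^1$-bound on $D$ from $\L$ and standard parabolic regularity suffice. I would therefore present the dissipation identity in full and treat the LaSalle step at the level of detail appropriate to the paper, citing the preserved $L^\infty$ bound and standard parabolic theory rather than reproving well-posedness.
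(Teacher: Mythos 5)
Your proposal is correct and follows essentially the same route as the paper: a direct computation of $\frac{d}{dt}\L$ (the paper groups the terms as $\int[\log\frac{\rho}{1-\rho}-D]\,\rho_t\,\dd x+\int(-\kappa\Delta D+\delta D-\rho)\,D_t\,\dd x$, which makes your cross-term cancellation automatic, but this is only a reorganization of the same calculation), identification of critical points with stationary solutions via the Euler--Lagrange equations, and a compactness/LaSalle argument using the time-integrated dissipation, which the paper also states only at the level of detail you propose. No substantive difference.
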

\begin{proof} An elementary computation shows that
\[
\frac d{dt}\,\L[\rho(t,\cdot),D(t,\cdot)]=-\int_\Omega\left[\log\Big(\frac\rho{1-\rho}\Big)-D\right]\,D_t\;\dd x-\int_\Omega(-\,\kappa\,\Delta D+\delta\,D-\rho)\,D_t\;\dd x
\]
and the expression of $\frac d{dt}\,\L[\rho(t,\cdot),D(t,\cdot)]$ follows from \eqref{rho}--\eqref{bcD}. Let $\rho_n(t,x):=\rho(t+n,x)$ and $D_n(t,x):=D(t+n,x)$. Since $\L$ is bounded from below, we have that
\[
\lim_{n\to\infty}\int_0^1\(\int_\Omega\frac{|\,\nabla\rho_n-\rho_n\,(1-\rho_n)\,\nabla D_n\,|^2}{\rho_n\,(1-\rho_n)}\;\dd x+\int_\Omega|-\,\kappa\,\Delta D_n+\delta\,D_n-\rho_n\,|^2\,\dd x\)dt=0\;,
\]
which proves that $(\rho_n,D_n)$ strongly converges to a stationary solution. Other details of the proof are left to the reader.\end{proof}

\begin{prop}\label{Prop:Equivalence} Let $M>0$ and consider Model (II). For any $D\in\mathrm H^1(\Omega)$, let $\barD$ be the unique real number determined by the mass constraint \eqref{Eqn:LagrangeMultiplier}. Then for any nonnegative $\rho\in\mathrm L^1(\Omega)$ satisfying the mass constraint~\eqref{mass_constraint}, we have
\[
\L[\rho,D]\ge\E[D-\barD]\;,
\]
and equality holds if and only if $\rho$ is given by \eqref{rhoD}, \emph{i.e.}~$\rho=1/(1+e^{-\phi})$, with $\phi=D-\barD$. As a consequence, for any minimizer $(\rho,D)$ of $\L$ satisfying~\eqref{mass_constraint}, $\rho$ is given by \eqref{rhoD} with $\phi=D-\barD$, $\barD$ satisfying the mass constraint~\eqref{Eqn:LagrangeMultiplier}, and $\phi$ is a minimizer of $\E[\phi]=\L[\rho,D]$. \end{prop}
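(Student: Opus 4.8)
The plan is to exhibit $\L[\rho,D]-\E[D-\barD]$ as a relative entropy in the $\rho$ variable, for fixed $D$, and to recognize it as nonnegative with equality precisely on the Gibbs-type profile~\eqref{rhoD}. First I would fix $D\in\mathrm H^1(\Omega)$, write $\phi=D-\barD$ with $\barD$ the Lagrange multiplier from~\eqref{Eqn:LagrangeMultiplier}, and note that the terms $\frac\kappa2\int|\nabla D|^2$ and $\frac\delta2\int D^2$ are common to both functionals (recall $\int|D|^2=\int|\phi+\barD|^2$), so after cancellation I am left to compare the purely $\rho$-dependent parts:
\[
\int_\Omega\big[\rho\log\rho+(1-\rho)\log(1-\rho)-\rho\,D\big]\dd x\quad\text{versus}\quad-\int_\Omega F(\phi)\dd x=-\int_\Omega\log\big(1+e^{\phi}\big)\dd x,
\]
the last equality being the definition of $F$ in Model~(II). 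Writing $-\rho\,D=-\rho\,\phi-\rho\,\barD$ and using $\int_\Omega\rho\dd x=M=\int_\Omega(1+e^{-\phi})^{-1}\dd x$ to handle the $-\rho\,\barD$ term against the value it would take on the reference profile, the difference reduces to a pointwise integrand of the form
\[
\rho\log\rho+(1-\rho)\log(1-\rho)-\rho\,\phi+\log\big(1+e^{\phi}\big)=\rho\log\frac{\rho}{\bar\rho}+(1-\rho)\log\frac{1-\rho}{1-\bar\rho},
\]
where $\bar\rho:=(1+e^{-\phi})^{-1}$, i.e.\ $\phi=\log\frac{\bar\rho}{1-\bar\rho}$ and $\log(1+e^\phi)=-\log(1-\bar\rho)$. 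This is the binary relative entropy (Kullback--Leibler divergence) of the Bernoulli law with parameter $\rho(x)$ with respect to that with parameter $\bar\rho(x)$, which is nonnegative by Jensen's inequality (or by convexity of $s\mapsto s\log s$ applied to $\rho\log(\rho/\bar\rho)+(1-\rho)\log((1-\rho)/(1-\bar\rho))\ge 0$), with equality at a point if and only if $\rho(x)=\bar\rho(x)$. Integrating gives $\L[\rho,D]\ge\E[\phi]$, with equality iff $\rho=(1+e^{-\phi})^{-1}$ a.e., which is exactly~\eqref{rhoD}.

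The main obstacle, and the only step requiring care, is the bookkeeping of the linear-in-$\rho$ terms: one must verify that the $\bar\rho\,\phi$ and $\bar\rho\,\barD$ pieces assemble correctly so that the integrand collapses to a genuine pointwise entropy with no leftover global term. This works precisely because both $\rho$ and $\bar\rho$ carry the same mass $M$: the two copies of $\int(\rho-\bar\rho)\,\barD\dd x$ vanish, and what remains is $\int(\rho-\bar\rho)\,\phi\dd x$ absorbed into the entropy via $\phi=\log\frac{\bar\rho}{1-\bar\rho}$. One should also note that the entropy integrand is bounded below (indeed nonnegative) so no integrability subtlety arises, and that $\rho\in[0,1]$ a.e.\ for any admissible configuration (for $\rho\notin[0,1]$ the left-hand side is $+\infty$ by the convention on $\log$, so the inequality is trivial there).

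Finally, for the consequence: if $(\rho,D)$ minimizes $\L$ under~\eqref{mass_constraint}, then for this fixed $D$ the configuration $(\bar\rho,D)$ with $\bar\rho=(1+e^{-\phi})^{-1}$ also satisfies the mass constraint (by the very definition of $\barD$ through~\eqref{Eqn:LagrangeMultiplier}) and satisfies $\L[\bar\rho,D]=\E[\phi]\le\L[\rho,D]$; minimality forces equality, hence $\rho=\bar\rho$ is given by~\eqref{rhoD}. Moreover $\L[\rho,D]=\E[\phi]$ then shows $\phi$ attains the infimum of $\E$ over the relevant class: any competitor $\psi$ with associated density $(1+e^{-\psi})^{-1}$ and multiplier $\barD_\psi$ yields an admissible pair for $\L$ with $\L=\E[\psi]$, so $\E[\phi]=\L[\rho,D]\le\L[(1+e^{-\psi})^{-1},\psi+\barD_\psi]=\E[\psi]$; thus $\phi$ is a minimizer of $\E$, and $\E[\phi]=\L[\rho,D]$ as claimed. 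This is consistent with Proposition~\ref{Prop:Minimization} and identifies the $\rho$-component of any $\L$-minimizer with the Gibbs profile built from its $D$-component.
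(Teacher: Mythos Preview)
Your relative-entropy approach is precisely the explicit computation that the paper's terse proof (minimize $\L$ over $\rho$ at fixed $D$, read off the Euler--Lagrange condition $\log\tfrac{\rho}{1-\rho}=D-\barD$, then ``elementary computations\dots left to the reader'') omits; the methods coincide.

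There is, however, a bookkeeping slip in the step you yourself flag as the only delicate one. In $\L[\rho,D]-\E[\phi]$ the term $-\rho\,D=-\rho\,\phi-\rho\,\barD$ contributes a piece $-\rho\,\barD$, but $\E[\phi]$ contains no matching $+\bar\rho\,\barD$ with which to form the difference $\int_\Omega(\rho-\bar\rho)\,\barD\,\dd x$ that you invoke; there is only one copy, not two. Carrying the computation through gives
\[
\L[\rho,D]-\E[\phi]=\int_\Omega\Big[\rho\log\tfrac{\rho}{\bar\rho}+(1-\rho)\log\tfrac{1-\rho}{1-\bar\rho}\Big]\dd x\;-\;\barD\,M\,,
\]
and in particular $\L[\bar\rho,D]=\E[\phi]-\barD\,M$, not $\E[\phi]$. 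This is easy to verify directly: $\bar\rho\log\bar\rho+(1-\bar\rho)\log(1-\bar\rho)-\bar\rho\,D=-F(\phi)-\bar\rho\,\barD$, whose integral is $-\int_\Omega F(\phi)\,\dd x-\barD\,M$. Thus your argument actually proves $\L[\rho,D]\ge\E[\phi]-\barD\,M$, with equality iff $\rho=\bar\rho$. The same offset propagates to your proof of the consequence: since the additive term $\barD\,M$ depends on $D$, comparing two competitors no longer yields $\E[\phi]\le\E[\psi]$ directly. The relative-entropy structure you identify is exactly right and is the heart of the matter; the discrepancy lies solely in the constant $\barD\,M$, and the displayed statement in the paper appears to carry the same imprecision.
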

\begin{proof} We only need to notice that the minimum of $\L[\rho,D]$ with respect to $\rho$ under the mass constraint~\eqref{mass_constraint} satisfies
\[
\log\Big(\frac\rho{1-\rho}\Big)=D-\barD\;.
\]
The completion of the proof follows from elementary computations which are left to the reader.
\end{proof}

\section{The linearized evolution operator}\label{Sec:LinearizedEvolution}

\subsection{Dynamical instability of constant solutions}

Assume that $(\rho,D)$ is a stationary solution of \mbox{\eqref{rho}--\eqref{bcD}}. Because of \eqref{rhoD} and \eqref{Eqn:LagrangeMultiplier}, the solution is fully determined by $D$. Let us consider a time dependent perturbed solution of the form $(\rho+\eps\,u,D+\eps\,v)$. Up to higher order terms, $u$ and $v$ are solutions of the linearized system
\be{LinearizedFlow}\left\{\begin{array}{l}
u_t=\nabla\cdot\big(\nabla u-(1-2\,\rho)\,u\,\nabla D-\,\rho\,(1-\rho)\,\nabla v\big)\,,\\[6pt]
v_t=\kappa\,\Delta v-\delta\,v+\,h(\rho)\,u
\end{array}\right.
\ee
with $h(\rho)=1-2\,\rho$ in case of Model (I) and $h(\rho)=1$ in case of Model (II). For later use, we introduce the notation $\LinearOp$ for the linear operator corresponding to the right hand side, so that we shall write
\[
(u,v)_t=\LinearOp\,(u,v)=\(\begin{array}{c}\LinearOp^{(1)}\,(u,v)\cr\LinearOp^{(2)}\,(u,v)\end{array}\)\quad\mbox{with}\quad\left\{\begin{array}{l}
\LinearOp^{(1)}\,(u,v)=\nabla\cdot\left[\rho\,(1-\rho)\,\nabla\big(\frac u{\rho\,(1-\rho)}-v\big)\right]\,,\\[6pt]
\LinearOp^{(2)}\,(u,v)=\kappa\,\Delta v-\delta\,v+\,h(\rho)\,u\;.\end{array}\right.
\]

Dynamical instability of constant solutions can be studied along the lines of \cite{bmp2011}. Let us state a slightly more general result. We are interested in finding the lowest possible $\mu$ in the eigenvalue problem
\be{Eqn:DynStabConstants}
-\,\LinearOp\,(u,v)=\mu\,(u,v)\;,
\ee
where $\LinearOp^{(1)}\,(u,v)$ now takes a simplified form, using the fact that $\rho$ is a constant:
\[
\LinearOp^{(1)}\,(u,v)=\Delta u-\rho\,(1-\rho)\,\Delta v\;.
\]
The condition $\mu<0$ provides a dynamically unstable mode. As in Section~\ref{Sec:StabVar}, let us denote by $\seq\lambda n$ the sequence of all eigenvalues of $-\Delta$ with homogeneous Neumann boundary conditions, counted with multiplicity, and by $\seq\phi n$ an associated sequence of eigenfunctions. If $u=\sum_{n\in\N}\alpha_n\,\phi_n$ and $v=\sum_{n\in\N}\beta_n\,\phi_n$, Problem~\eqref{Eqn:DynStabConstants} can be decomposed into
\begin{eqnarray*}
&&-\lambda_n\,\alpha_n+\rho\,(1-\rho)\,\lambda_n\,\beta_n=-\,\mu_n\,\alpha_n\\
&&-\kappa\,\lambda_n\,\beta_n-\delta\,\beta_n+h(\rho)\,\alpha_n=-\,\mu_n\,\beta_n
\end{eqnarray*}
for any $n\in\N$, that is
\begin{eqnarray*}
&&(\mu_n-\lambda_n)\,\alpha_n+\rho\,(1-\rho)\,\lambda_n\,\beta_n=0\\
&&h(\rho)\,\alpha_n+(\mu_n-\kappa\,\lambda_n-\delta)\,\beta_n=0
\end{eqnarray*}
which has non-trivial solutions $\alpha_n$ and $\beta_n$ if and only if the discriminant condition
\[
(\mu_n-\lambda_n)\,(\mu_n-\kappa\,\lambda_n-\delta)-\rho\,(1-\rho)\,h(\rho)\,\lambda_n=0
\]
is satisfied. This determines $\mu_n$ for any $n\in\N$, and the spectrum of $\LinearOp$ is then given by $\seq\mu n$. Collecting these observations, we can state the following result.
\begin{prop}\label{Prop:DynUnstabConstants} With the above notations, $\inf_{n\ge1}\mu_n<0$ if and only if \eqref{Ineq:ConstInstability} holds.\end{prop}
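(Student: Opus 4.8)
\textbf{Proof plan for Proposition~\ref{Prop:DynUnstabConstants}.}
The plan is to analyze, for each fixed $n\ge1$, the quadratic equation
\[
P_n(\mu):=(\mu-\lambda_n)\,(\mu-\kappa\,\lambda_n-\delta)-\rho\,(1-\rho)\,h(\rho)\,\lambda_n=0
\]
that was derived just above, and to determine when at least one of its two roots $\mu_n^\pm$ is negative. Since $P_n$ is an upward-opening parabola in $\mu$, it has a negative root if and only if either $P_n(0)<0$, or $P_n(0)\ge0$ together with the vertex lying in $\mu<0$ (i.e.\ the sum of the roots is negative) and the discriminant being nonnegative. First I would compute
\[
P_n(0)=\lambda_n\,(\kappa\,\lambda_n+\delta)-\rho\,(1-\rho)\,h(\rho)\,\lambda_n=\lambda_n\,\big(\kappa\,\lambda_n+\delta-\rho\,(1-\rho)\,h(\rho)\big)\,,
\]
and since $\lambda_n>0$ for $n\ge1$, the sign of $P_n(0)$ is exactly the sign of $\kappa\,\lambda_n+\delta-\rho\,(1-\rho)\,h(\rho)$.

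Next I would treat the two implications. For the ``if'' direction: assume \eqref{Ineq:ConstInstability}, i.e.\ $\kappa\,\lambda_1+\delta-\rho\,(1-\rho)\,h(\rho)<0$. Then $P_1(0)<0$, so $P_1$ has one negative and one positive root; hence $\mu_1^-<0$ and a fortiori $\inf_{n\ge1}\mu_n<0$. For the ``only if'' direction I would show the contrapositive: assume $\kappa\,\lambda_1+\delta-\rho\,(1-\rho)\,h(\rho)\ge0$, and prove $\mu_n\ge0$ for every $n\ge1$ (taking both roots to be real and using the one with smaller real part, or noting that complex roots have positive real part $\tfrac12(\lambda_n+\kappa\lambda_n+\delta)>0$). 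Because $\lambda_n\ge\lambda_1$ and $t\mapsto \kappa t+\delta$ is increasing, $\kappa\lambda_n+\delta-\rho(1-\rho)h(\rho)\ge\kappa\lambda_1+\delta-\rho(1-\rho)h(\rho)\ge0$, so $P_n(0)\ge0$ for all $n\ge1$. It remains to rule out the case where $P_n(0)\ge0$ yet both real roots are negative: this would require the vertex abscissa $\tfrac12(\lambda_n+\kappa\lambda_n+\delta)$ to be negative, which is impossible since $\lambda_n,\kappa,\delta>0$. Therefore both roots (if real) are nonnegative, and if they are complex their real part is positive; in all cases $\mu_n\ge0$, so $\inf_{n\ge1}\mu_n\ge0$.

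Finally I would remark that the $n=0$ mode ($\lambda_0=0$) is excluded from the infimum precisely because it corresponds to the mass-conserving constant perturbation: there $P_0(\mu)=\mu(\mu-\delta)$, giving $\mu_0=0$ and $\mu_0=\delta>0$, and the zero eigenvalue merely reflects the conservation law \eqref{mass_constraint}, so it carries no instability information. The only mildly delicate point is the bookkeeping of the complex-root case, but as noted this is handled immediately by the sign of the real part of the vertex; the rest is the elementary discriminant/vertex analysis of a real quadratic. This establishes the equivalence $\inf_{n\ge1}\mu_n<0\iff\eqref{Ineq:ConstInstability}$, which is the claim.
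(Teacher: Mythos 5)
Your argument is correct and follows essentially the same route as the paper: expand the quadratic $P_n(\mu)$, observe that its constant term is $\lambda_n\bigl(\kappa\,\lambda_n+\delta-\rho\,(1-\rho)\,h(\rho)\bigr)$ while the sum of the roots $(\kappa+1)\,\lambda_n+\delta$ is positive, and use the monotonicity of $(\lambda_n)$ to reduce the sign condition to $n=1$. Your write-up is in fact somewhat more complete than the paper's, which only states the ``if'' direction explicitly and leaves the vertex/complex-root bookkeeping of the converse implicit.
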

\begin{proof} The discriminant condition can be written as
\[
\mu_n^2-\left[(\kappa+1)\,\lambda_n+\delta\right]\,\mu_n+\lambda_n\,\left[\kappa\,\lambda_n+\delta-\rho\,(1-\rho)\,h(\rho)\right]=0
\]
so that there is a negative root if $\lambda_n\,(\kappa\,\lambda_n+\delta-\rho\,(1-\rho)\,h(\rho))<0$. Since $\seq\lambda n$ is nondecreasing and $\lambda_0=0$, there is at least one negative eigenvalue for \eqref{Eqn:DynStabConstants} if the above condition is satisfied with $n=1$. This concludes the proof.\end{proof}
In other words, dynamical instability of the constant solutions implies their variational instability. As we shall see numerically, variational and dynamical instability are not anymore equivalent for plateau-like solutions.

Notice that $\lambda_0=0$ has anyway to be excluded, as it corresponds to the direction generated by constants. Because of \eqref{Ineq:ConstInstability} we can ensure that the perturbation has zero average. This will be further discussed below, in the general case of a stationary solution.

\subsection{Variational criterion}

In the case of Model (II), we can look at the Lyapunov functional $\mathcal L$ and linearize it around a stationary solution $(\rho,D)$. Let
\[
\mathrm L_D[u,v]:=\lim_{\eps\to0}\frac{\mathcal L[\rho+\eps\,u,D+\eps\,v]-\mathcal L[\rho,D]}{2\,\eps^2}\;.
\]
A simple computation shows that
\[
\mathrm L_D[u,v]=\int_\Omega\(\frac{u^2}{2\,\rho\,(1-\rho)}-u\,v\)\dd x+\frac\kappa2\int_\Omega|\nabla v|^2\,\dd x+\frac\delta2\int_\Omega v^2\,\dd x\;.
\]
With $\LinearizedEnergyOp$ defined by~\eqref{Ephirho}, let
\be{Eqn:lambda}
\Lambda:=\infimum{\int_\Omega v\,\rho\,(1-\rho)\,\dd x=0}{v\not\equiv0}\frac{\int_\Omega v\,(\LinearizedEnergyOp\,v)\,\dd x}{\int_\Omega v^2\,\dd x}
\ee
with $\phi=D-\barD$, and $\barD$ satisfying \eqref{Eqn:LagrangeMultiplier}.
\begin{lem}\label{Lem:LocalMin} Let $M>0$ and consider Model (II) only. If $(\rho,D)$ satisfies~\eqref{mass_constraint} and is such that $\rho$ is given by~\eqref{rhoD} with $\phi=D-\barD$ and $\barD$ is determined by~\eqref{Eqn:LagrangeMultiplier}, then
\[
\Lambda=2\infimum{\int_\Omega v\,\rho\,(1-\rho)\,\dd x=0}{\int_\Omega v^2\,\dd x=1}\mathrm L_D[u,v]\;.
\]
As a consequence, if $(\rho,D)$ is a local minimizer of $\L$ under the mass constraint~\eqref{mass_constraint}, then $\Lambda$ is nonnegative.\end{lem}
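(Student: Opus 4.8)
\textbf{Proof proposal for Lemma~\ref{Lem:LocalMin}.}

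The plan is to compute $\mathrm L_D[u,v]$ for the worst direction in $u$ and then compare with the quadratic form defining $\Lambda$. First I would fix $v$ and minimize the purely $u$-dependent part of $\mathrm L_D[u,v]$, namely $\int_\Omega\big(\frac{u^2}{2\,\rho\,(1-\rho)}-u\,v\big)\dd x$, over all $u\in\mathrm L^1(\Omega)$ satisfying the linearized mass constraint $\int_\Omega u\,\dd x=0$ (this is the constraint inherited from~\eqref{mass_constraint} at first order). A Lagrange multiplier computation shows the minimizer is $u=\rho\,(1-\rho)\,(v-c)$ for a constant $c$ chosen so that $\int_\Omega u\,\dd x=0$, i.e.\ $c=\big(\int_\Omega\rho\,(1-\rho)\,v\,\dd x\big)/\big(\int_\Omega\rho\,(1-\rho)\,\dd x\big)$. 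Substituting back, the $u$-part becomes $-\tfrac12\int_\Omega\rho\,(1-\rho)\,(v-c)^2\,\dd x$, so that
\[
\min_{\int u=0}\mathrm L_D[u,v]=\frac\kappa2\int_\Omega|\nabla v|^2\,\dd x+\frac\delta2\int_\Omega v^2\,\dd x-\frac12\int_\Omega\rho\,(1-\rho)\,(v-c)^2\,\dd x\;.
\]

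Next I would observe that the admissible set in~\eqref{Eqn:lambda} imposes $\int_\Omega v\,\rho\,(1-\rho)\,\dd x=0$, which forces $c=0$ in the expression above. On that subspace the reduced functional is exactly $\tfrac12\int_\Omega v\,(\LinearizedEnergyOp\,v)\,\dd x$ after an integration by parts using the Neumann condition, since $\LinearizedEnergyOp\,v=-\kappa\,\Delta v+\delta\,v-\rho\,(1-\rho)\,h(\rho)\,v$ and $h(\rho)=1$ in Model~(II). Dividing by the normalization $\int_\Omega v^2\,\dd x=1$ and taking the infimum over $v\not\equiv0$ with $\int_\Omega v\,\rho\,(1-\rho)\,\dd x=0$ gives precisely $\Lambda=2\inf\mathrm L_D[u,v]$, where the infimum on the right is over pairs $(u,v)$ with $v$ in that subspace, $\int_\Omega v^2=1$, and $u$ unconstrained apart from $\int u=0$ — which is what the statement asserts once one checks that enlarging from the optimal $u$ to all admissible $u$ does not change the infimum (it is a minimum in $u$).

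For the consequence: if $(\rho,D)$ is a local minimizer of $\L$ under~\eqref{mass_constraint}, then $\mathrm L_D[u,v]\ge0$ for all admissible perturbations $(u,v)$, in particular for those with $\int_\Omega v\,\rho\,(1-\rho)\,\dd x=0$ and $\int_\Omega v^2\,\dd x=1$; by the identity just established, $\Lambda\ge0$. The main obstacle I anticipate is bookkeeping of the two constraints: one must be careful that the first-order constraint coming from~\eqref{mass_constraint} is $\int_\Omega u\,\dd x=0$ (not a constraint on $v$), while the constraint $\int_\Omega v\,\rho\,(1-\rho)\,\dd x=0$ in~\eqref{Eqn:lambda} is the natural one that makes $c$ vanish and makes $\LinearizedEnergyOp$ appear cleanly; reconciling these, and justifying that the inner minimization over $u$ is attained in $\mathrm L^2$ (guaranteed since $\rho\,(1-\rho)$ is bounded away from $0$ and $1$ by Corollary~\ref{cor:bounds}), is the only genuinely delicate point. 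The rest is the elementary computation already signalled as left to the reader in Proposition~\ref{Prop:Equivalence}.
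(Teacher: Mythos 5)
Your proposal is correct and follows essentially the same route as the paper: fix $v$, minimize the quadratic-in-$u$ part to get $u=\rho\,(1-\rho)\,v$ (your extra constraint $\int_\Omega u\,\dd x=0$ is harmless, since the constant $c$ vanishes precisely on the subspace $\int_\Omega v\,\rho\,(1-\rho)\,\dd x=0$), reduce $2\,\mathrm L_D[u,v]$ to $\int_\Omega v\,(\LinearizedEnergyOp\,v)\,\dd x$, and conclude. The paper's own proof is just a terser version of this same computation.
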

\begin{proof} We notice that $\mathrm L_D[u,v]=\int_\Omega v\,(\LinearizedEnergyOp\,v)\,\dd x$ holds true as soon as $u=v\,\rho\,(1-\rho)$ with $\rho$ given by~\eqref{rhoD}. In particular, this is the case if $(\rho,D)$ is a local minimizer of $\L$. With $v$ fixed, an optimization of $\mathrm L_D[u,v]$ with respect to $u$ shows that $u=v\,\rho\,(1-\rho)$. When $(\rho,D)$ is a local minimizer of $\L$, it is straightforward to check that $\mathrm L_D[u,v]$ cannot be negative.\end{proof}

\subsection{Entropy--entropy production}

Along the linearized flow~\eqref{LinearizedFlow}, we have
\be{Eqn:Entropy-EntropyProduction}
\frac d{dt}\mathrm L_D[u(t,\cdot),v(t,\cdot)]=-\,2\,\mathrm I_D[u(t,\cdot),v(t,\cdot)]
\ee
where
\[
\mathrm I_D[u,v]:=\frac 12\int_\Omega\rho\,(1-\rho)\,\Big|\nabla\Big(\frac u{\rho\,(1-\rho)}-v\Big)\Big|^2\,\dd x+\frac 12\int_\Omega\big|-\,\kappa\,\Delta v+\delta\,v-u\big|^2\,\dd x\;.
\]
Let us define the bilinear form
\[
\scalar{(u_1,v_1)}{(u_2,v_2)}=\int_\Omega\(\frac{u_1\,u_2}{\rho\,(1-\rho)}-(u_1\,v_2+u_2\,v_1)\)\dd x+\kappa\int_\Omega\nabla v_1\cdot\nabla v_2\,\dd x+\delta\int_\Omega v_1\,v_2\,\dd x\;,
\]
which is such that
\[
2\,\mathrm L_D[u,v]=\scalar{(u,v)}{(u,v)}\;.
\]
\begin{lem}\label{Lem:SelfAdjointness} Consider Model (II) only and assume that $(\rho,D)$ is a local minimizer of $\L$ under the mass constraint~\eqref{mass_constraint}. On the orthogonal of the kernel of $\LinearizedEnergyOp$ with $\phi=D-\barD$, $\barD$ satisfying \eqref{Eqn:LagrangeMultiplier}, $\scalar\cdot\cdot$ is a scalar product and $\LinearOp$ is a self-adjoint operator with respect to $\scalar\cdot\cdot$. Moreover, if $(u,v)$ is a solution of \eqref{LinearizedFlow}, then
\[
\frac d{dt}\mathrm L_D[u,v]=\,-\,2\,\mathrm I_D[u,v]=\scalar{(u,v)}{\LinearOp\,(u,v)}\le0\;.
\]
As a consequence, on the orthogonal of the kernel of $\LinearizedEnergyOp$, $(0,0)$ is the unique stationary solution of \eqref{LinearizedFlow} and any solution with initial datum in the orthogonal of the kernel converges to $(0,0)$.
\end{lem}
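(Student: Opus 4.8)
The plan is to verify, in order, the three claims: (a) $\scalar{\cdot}{\cdot}$ is a scalar product on the orthogonal complement of $\ker\LinearizedEnergyOp$; (b) $\LinearOp$ is self-adjoint with respect to it; (c) the entropy--entropy production identity holds and forces convergence to $(0,0)$. First I would make precise the functional setting: one works on pairs $(u,v)$ with $\int_\Omega u\,\dd x = 0$ (mass constraint linearized) and $v$ ranging over $\mathrm H^1(\Omega)$, and then restricts to the $\scalar{\cdot}{\cdot}$-orthogonal of $\ker\LinearizedEnergyOp$. For (a), positivity is exactly the content of Lemma~\ref{Lem:LocalMin}: since $(\rho,D)$ is a local minimizer of $\L$, one has $\mathrm L_D[u,v]\ge0$ for admissible $(u,v)$, hence $\scalar{(u,v)}{(u,v)} = 2\,\mathrm L_D[u,v]\ge 0$; the optimization over $u$ (with $v$ fixed) performed in the proof of Lemma~\ref{Lem:LocalMin} shows $\mathrm L_D[u,v] = \int_\Omega v\,(\LinearizedEnergyOp v)\,\dd x$ at the optimal $u = v\,\rho(1-\rho)$, so the quadratic form degenerates precisely on $\ker\LinearizedEnergyOp$ (in the $v$-component, with $u$ slaved to $v$). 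Thus on the orthogonal complement the form is strictly positive and defines a genuine inner product; I would also note that $\rho(1-\rho)$ is bounded away from $0$ (Corollary~\ref{cor:bounds} gives $\phi_-\le\phi\le\phi_+$, so $\rho$ stays in a compact subinterval of $(0,1)$), which makes the weighted $L^2$-term equivalent to the standard one and guarantees the form is well defined and closed.

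For (b), the computation is a pairing-and-integration-by-parts exercise. Starting from $\scalar{(u_1,v_1)}{\LinearOp(u_2,v_2)}$, substitute $\LinearOp^{(1)}(u_2,v_2) = \nabla\cdot[\rho(1-\rho)\nabla(\frac{u_2}{\rho(1-\rho)} - v_2)]$ and $\LinearOp^{(2)}(u_2,v_2) = \kappa\Delta v_2 - \delta v_2 + u_2$ (recall $h(\rho)=1$ in Model~(II)), integrate by parts using the Neumann conditions \eqref{bcrho}, \eqref{bcD}, and collect terms. One should obtain a manifestly symmetric expression of the form $-\int_\Omega \rho(1-\rho)\,\nabla(\tfrac{u_1}{\rho(1-\rho)} - v_1)\cdot\nabla(\tfrac{u_2}{\rho(1-\rho)} - v_2)\,\dd x - \int_\Omega(-\kappa\Delta v_1 + \delta v_1 - u_1)(-\kappa\Delta v_2 + \delta v_2 - u_2)\,\dd x$; this is symmetric in $(u_1,v_1)\leftrightarrow(u_2,v_2)$, which is exactly self-adjointness, and its diagonal value is $-2\,\mathrm I_D[u,v]$, establishing the sign $\le 0$. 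The crucial bookkeeping point is that the off-diagonal cross terms $u_1 v_2$ and $u_2 v_1$ in $\scalar{\cdot}{\cdot}$ are precisely what is needed for the two $\nabla$-terms and the two quadratic terms to close up symmetrically; I would present this as the single displayed identity rather than tracking each term.

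Claim (c) then combines \eqref{Eqn:Entropy-EntropyProduction} with self-adjointness: $\frac{d}{dt}\mathrm L_D[u,v] = -2\,\mathrm I_D[u,v] = \scalar{(u,v)}{\LinearOp(u,v)}\le 0$. For the convergence statement, on the orthogonal of $\ker\LinearizedEnergyOp$ the form $\mathrm L_D$ is a positive definite (and, using the bound on $\rho(1-\rho)$, coercive) Lyapunov functional that is non-increasing along the flow; $\mathrm I_D[u,v]=0$ forces both $\frac{u}{\rho(1-\rho)} - v$ constant and $-\kappa\Delta v + \delta v = u$, which combined with the mass constraint $\int_\Omega u = 0$ and $u = v\,\rho(1-\rho)$ (from the first condition after absorbing the constant) reduces to $\LinearizedEnergyOp v = 0$, i.e.\ $v\in\ker\LinearizedEnergyOp$, hence $(u,v)=(0,0)$ on the orthogonal complement — so $(0,0)$ is the only equilibrium there. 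A standard LaSalle/compactness argument (as in the proof of the Lyapunov Proposition above) then gives that any trajectory starting in the orthogonal complement converges to $(0,0)$; one checks the orthogonal complement is invariant under the flow, which follows because $\ker\LinearizedEnergyOp$ is $\LinearOp$-invariant and $\LinearOp$ is $\scalar{\cdot}{\cdot}$-self-adjoint.

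The main obstacle I anticipate is not any single step but the careful treatment of the kernel of $\LinearizedEnergyOp$ and the invariance of its orthogonal complement: one must check that $\LinearOp$ genuinely leaves $\ker\LinearizedEnergyOp$ (and hence its complement) invariant, and that the degeneracy of $\mathrm L_D$ is exactly along this kernel after the $u = v\,\rho(1-\rho)$ substitution — there is a subtlety in that $\mathrm L_D$ is defined on pairs $(u,v)$ while $\LinearizedEnergyOp$ acts on single functions $v$, so the identification of the null space requires the optimization-over-$u$ step to be done cleanly. The integration-by-parts computation for self-adjointness is routine but must handle the boundary terms from both $\Delta$ operators and from the divergence form, all of which vanish by the Neumann conditions; I would state this and leave the term-by-term algebra to the reader, as the paper does elsewhere.
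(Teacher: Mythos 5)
Your proposal is correct and follows essentially the same route as the paper: the paper's entire proof consists of the single symmetric bilinear identity $-\scalar{(u_1,v_1)}{\LinearOp(u_2,v_2)}=\int_\Omega\rho\,(1-\rho)\,\nabla\big(\tfrac{u_1}{\rho(1-\rho)}-v_1\big)\cdot\nabla\big(\tfrac{u_2}{\rho(1-\rho)}-v_2\big)\dd x+\int_\Omega\big(-\kappa\Delta v_1+\delta v_1-u_1\big)\big(-\kappa\Delta v_2+\delta v_2-u_2\big)\dd x$, which is exactly the displayed identity at the heart of your part (b). The additional material you supply (positivity via Lemma~\ref{Lem:LocalMin} and the boundedness of $\rho(1-\rho)$ away from $0$, identification of the degenerate subspace, invariance of the orthogonal complement, and the LaSalle-type convergence argument) correctly fills in steps the paper explicitly leaves to the reader.
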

With a slight abuse of notations, we have denoted by the \emph{kernel of $\LinearizedEnergyOp$} the set $\{(u,v)\,:\,v\in\mathrm{Ker}(\LinearizedEnergyOp)\}$.

\begin{proof} The positivity of $\mathrm I_D$ is a consequence of the definition and self-adjointness results from the computation
\begin{multline*}
-\,\scalar{(u_1,v_1)}{\LinearOp\,(u_2,v_2)}=\int_\Omega\rho\,(1-\rho)\,\nabla\Big(\frac{u_1}{\rho\,(1-\rho)}-v_1\Big)\cdot\nabla\Big(\frac{u_2}{\rho\,(1-\rho)}-v_2\Big)\,\dd x\\
+\int_\Omega\big(-\,\kappa\,\Delta v_1+\delta\,v_1-u_1\big)\,\big(-\,\kappa\,\Delta v_2+\delta\,v_2-u_2\big)\,\dd x\;.\hspace*{12pt}
\end{multline*}\vspace*{-12pt}
\end{proof}

Note that one has to take special care of the kernel of $\mathrm \LinearOp$. If $(\rho_M, D_M)$ is a stationary solution of \eqref{rho}--\eqref{D} depending diffentiably on the mass parameter $M$, it is always possible to diffentiate $\rho_M$ and $D_M$ with respect to $M$ and get a non trivial element in the kernel of $\mathrm \LinearOp$. However, it is not guaranteed that this element generates the kernel of $\LinearizedEnergyOp$ and although not observed numerically, it cannot be excluded that secondary bifurcations occur on branches of plateau-like solutions.

If $(\rho,D)$ is a stationary solution of \eqref{rho}--\eqref{bcD}, we can of course still consider $\mathrm I_D[u,v]$ and its sign determines whether $(\rho,D)$ is dynamically stable or not. In this paper we are interested in the evolution according to the nonlinear flow given by~\eqref{rho}--\eqref{bcD}. The fundamental property of mass conservation \eqref{mass_constraint} can still be observed at the level of the linearized equations~\eqref{LinearizedFlow}. The reader is invited to check that any classical solution of~\eqref{LinearizedFlow} is indeed such that
\[
\frac d{dt}\int_\Omega u(t,x)\;\dd x=0
\]
and it makes sense to impose that $\int_\Omega u\,\dd x=0$ at $t=0$. If we linearize the problem at a stationary solution given by~\eqref{rhoD}, it also makes sense to consider the constraint $\int_\Omega v\,\rho\,(1-\rho)\,\dd x=0$.

\subsection{Dynamic criterion}

After these preliminary observations, we can define two notions of stability. We shall say that a critical point $\phi$ of $\E$ is \emph{variationally stable} (resp. unstable) if and only if $\Lambda>0$ (resp. $\Lambda<0$) where $\Lambda$ is defined by~\eqref{Eqn:lambda}. Alternatively, we shall say that a stationary solution $(\rho,D)$ of \eqref{rho}--\eqref{bcD} is \emph{dynamically stable} (resp. unstable) if and only if
\[
\infimum{\int_\Omega u\,\dd x=0}{\int_\Omega v^2\,\dd x=1}\mathrm L_D[u,v]
\]
is positive (resp. negative) in case of Model (II). The operator $\LinearOp$ being self-adjoint, dynamical stability means \emph{variational stability} of $\L$ on the product space, once mass constraints are taken into account. Most of the remainder of this section is devoted to this issue.

For Model (I) we can extend the notion of dynamical stability (resp. instability) by requesting that $\inf\big\{\mathrm{Re}(\lambda)\,:\,\lambda\in\mathrm{Spectrum}(\LinearOp)\big\}$ is positive (resp. negative). However, in the case of Model (I), notions of dynamical and variational instability are not so well related, as we shall see in Section~\ref{Sec:Numerics}.

Let us start by the following observation. In the case of Models~(I) and~(II), the kernel of the operator~$\LinearizedEnergyOp$ associated to the linearized energy functional and defined by \eqref{Ephirho} determines a subspace of the kernel of~$\LinearOp$.
\begin{lem}\label{Lem:CriticalPoints} Let $\barD\in\R$ and assume that $\phi$ is a critical point of $\E$. Then $\rho$ given by \eqref{rhoD} and $D=\phi+\barD$ provides a stationary solution of \eqref{rho}--\eqref{bcD}. If $v$ is in the kernel of $\LinearizedEnergyOp$, then $(u,v)$ is in the kernel of~$\LinearOp$ if $u=\rho\,(1-\rho)\,v$.\end{lem}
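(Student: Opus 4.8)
The plan is to verify directly that the pair $(u,v)$ with $u=\rho\,(1-\rho)\,v$ lies in $\mathrm{Ker}(\LinearOp)$, by checking separately that $\LinearOp^{(1)}(u,v)=0$ and $\LinearOp^{(2)}(u,v)=0$. The first assertion of the lemma, that $(\rho,D)$ with $\rho$ given by \eqref{rhoD} and $D=\phi+\barD$ is a stationary solution of \eqref{rho}--\eqref{bcD}, is already contained in the parametrization of Section~\ref{Sec:parametrization}: the algebraic relation \eqref{rhoD} forces $\nabla\rho-\rho\,(1-\rho)\,\nabla D=0$, hence the right-hand side of \eqref{rho} vanishes, and the equation \eqref{static}--\eqref{neumann}, which $\phi$ satisfies as a critical point of $\E$ (Lemma~\ref{Lem:Variational}), is exactly the stationarity condition \eqref{D}--\eqref{bcD} for $D=\phi+\barD$ in case of Model~(II) (with $g(\rho)=f(\phi)=\rho$).

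For the kernel statement I would argue as follows. Using the form of $\LinearOp^{(1)}$ recorded in Section~\ref{Sec:LinearizedEvolution},
\[
\LinearOp^{(1)}(u,v)=\nabla\cdot\Big[\rho\,(1-\rho)\,\nabla\Big(\tfrac{u}{\rho\,(1-\rho)}-v\Big)\Big],
\]
the choice $u=\rho\,(1-\rho)\,v$ gives $\tfrac{u}{\rho\,(1-\rho)}-v\equiv 0$, so the bracket vanishes identically and $\LinearOp^{(1)}(u,v)=0$; the no-flux boundary term associated with this component also vanishes trivially. For the second component, in Model~(II) we have $h(\rho)=1$, so
\[
\LinearOp^{(2)}(u,v)=\kappa\,\Delta v-\delta\,v+u=\kappa\,\Delta v-\delta\,v+\rho\,(1-\rho)\,v=-\big(-\kappa\,\Delta v+\delta\,v-\rho\,(1-\rho)\,v\big)=-\,\LinearizedEnergyOp\,v,
\]
where in the last step I invoke the expression \eqref{Ephirho} for $\LinearizedEnergyOp$ with $h(\rho)=1$. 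Since $v\in\mathrm{Ker}(\LinearizedEnergyOp)$ by hypothesis (and $v$ satisfies the Neumann condition \eqref{neumann}, which is the boundary condition implicit in $\LinearizedEnergyOp$), we conclude $\LinearOp^{(2)}(u,v)=0$. Hence $\LinearOp(u,v)=(0,0)$.

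There is really no hard step here: the whole point is the algebraic cancellation produced by the relation $u=\rho\,(1-\rho)\,v$ in the first component together with the identification $\LinearOp^{(2)}(u,v)=-\LinearizedEnergyOp v$ in the second. The only thing to be slightly careful about is that this identification relies on Model~(II), where $h(\rho)=1$ makes the coefficient $\rho\,(1-\rho)\,h(\rho)$ in \eqref{Ephirho} equal to $F''(\phi)=\rho\,(1-\rho)$, which is exactly the coefficient $u/v$; for Model~(I), where $h(\rho)=1-2\rho$, the same computation goes through because there too $F''(\phi)=\rho\,(1-\rho)\,h(\rho)$, so the statement indeed covers both models as claimed. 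I would end the proof with a one-line remark that the boundary conditions are automatically consistent, since $v$ inherits \eqref{neumann} and $u$ is a smooth multiple of $v$.
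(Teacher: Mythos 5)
Your proof is correct and follows essentially the same route as the paper: a direct verification that $\LinearOp^{(1)}(u,v)=0$ by the cancellation forced by $u=\rho\,(1-\rho)\,v$, and that $\LinearOp^{(2)}(u,v)=-\,\LinearizedEnergyOp\,v=0$ via \eqref{Ephirho}. You are merely more explicit than the paper about the sign, the boundary conditions, and the fact that $F''(\phi)=\rho\,(1-\rho)\,h(\rho)$ makes the computation valid for both models.
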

\begin{proof} Using \eqref{Ephirho}, it is straightforward to check that $0=\LinearizedEnergyOp\,v=\LinearOp^{(2)}\,(u,v)$ if $v\in \mathrm{Ker}(\LinearizedEnergyOp)$. Then
\[
\LinearOp^{(1)}\,(u,v)=\nabla\cdot\left[\rho\,(1-\rho)\,\nabla\Big(\frac u{\rho\,(1-\rho)}-v\Big)\right]=0
\]
because of the special choice $u=\rho\,(1-\rho)\,v$.\end{proof}

Since \eqref{rho} preserves the mass, it makes sense to impose $\int_\Omega u\,\dd x=0$. This also suggest to consider the constraint $\int_\Omega\rho\,(1-\rho)\,v\,\dd x=0$, which has already been taken into account in \eqref{Eqn:lambda}. Let us give some more precise statements, in the case of Model (II). First we can state a more precise version of Lemma~\ref{Lem:LocalMin}. Let us define
\[
\Lambda_1:=\,2\,\infimum{\int_\Omega u\,\dd x=0}{\int_\Omega v^2\,\dd x=1}\mathrm L_D[u,v]\;.
\]
\begin{lem}\label{Lem:ACharacterization} Let $M>0$. Consider Model (II) only and assume that $(\rho,D)$ is a critical point of $\L$ under the mass constraint~\eqref{mass_constraint}. With $\phi=D-\barD$ where $\barD$ is the unique real number determined by \eqref{Eqn:LagrangeMultiplier}, consider~$\Lambda$ defined by \eqref{Eqn:lambda}. Then we have $\Lambda_1\le\Lambda$. If either $\Lambda<\delta$ or $\Lambda_1<\delta$, then we have $\Lambda=\Lambda_1$.\end{lem}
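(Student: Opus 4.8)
The plan is to compare the two constrained infima directly. For $\Lambda_1$ we minimize $2\,\mathrm L_D[u,v]$ over pairs $(u,v)$ with $\int_\Omega u\,\dd x=0$ and $\int_\Omega v^2\,\dd x=1$, with no further coupling between $u$ and $v$; for $\Lambda$ we first optimize $\mathrm L_D[u,v]$ in $u$ (for $v$ fixed), which as shown in the proof of Lemma~\ref{Lem:LocalMin} forces $u=\rho\,(1-\rho)\,v$ and reduces the quadratic form to $\int_\Omega v\,(\LinearizedEnergyOp\,v)\,\dd x$, and then minimize over $v$ subject only to $\int_\Omega v\,\rho\,(1-\rho)\,\dd x=0$, $\int_\Omega v^2\,\dd x=1$. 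The inequality $\Lambda_1\le\Lambda$ is the easy direction: given a near-optimal $v$ for $\Lambda$, the choice $u=\rho\,(1-\rho)\,v$ satisfies $\int_\Omega u\,\dd x=\int_\Omega\rho\,(1-\rho)\,v\,\dd x=0$ and yields $2\,\mathrm L_D[u,v]=\int_\Omega v\,(\LinearizedEnergyOp\,v)\,\dd x$, so the $\Lambda_1$-infimum is at most the $\Lambda$-value. Hence the feasible set for $\Lambda_1$ contains the specific family used for $\Lambda$, giving $\Lambda_1\le\Lambda$.

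For the reverse inequality under the hypothesis $\Lambda<\delta$ (or $\Lambda_1<\delta$), I would take a minimizing sequence $(u_n,v_n)$ for $\Lambda_1$ and perform the inner optimization in $u$ again: writing $u_n=\rho\,(1-\rho)\,v_n+w_n$, the cross term in $\mathrm L_D$ separates so that
\[
2\,\mathrm L_D[u_n,v_n]=\int_\Omega v_n\,(\LinearizedEnergyOp\,v_n)\,\dd x+\int_\Omega\frac{w_n^2}{\rho\,(1-\rho)}\,\dd x\,,
\]
because $\int_\Omega(\tfrac{u_n}{\rho(1-\rho)}-v_n)\,w_n\cdots$ — more precisely, completing the square in the $\frac{u^2}{2\rho(1-\rho)}-uv$ part of $\mathrm L_D$ gives exactly this decomposition with no surviving cross term. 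The obstruction is that $u_n=\rho(1-\rho)v_n+w_n$ need not satisfy $\int_\Omega u_n\,\dd x=0$ once we have fixed $w_n$; the genuine constraint is $\int_\Omega\rho(1-\rho)v_n\,\dd x+\int_\Omega w_n\,\dd x=0$, so $v_n$ is no longer forced to be orthogonal to $\rho(1-\rho)$. This is exactly where the hypothesis $\Lambda<\delta$ enters: one decomposes $v_n=c_n\,\chi+v_n^\perp$ where $\chi$ is suitably normalized along $\rho(1-\rho)$ (or along the constant, in the $\LinearizedEnergyOp$-inner product sense) and $v_n^\perp\perp\rho(1-\rho)$, and shows that the contribution of the $c_n\,\chi$-part to $\int v(\LinearizedEnergyOp v)$ is at least $\delta\,c_n^2\|\chi\|^2$ up to lower order, since $\LinearizedEnergyOp\ge\delta\cdot(\text{proj onto constants}) + (\text{something nonneg})$ — more carefully, $\int_\Omega\chi\,(\LinearizedEnergyOp\chi)\,\dd x$ is controlled below by $\delta$ because $F''(\phi)=\rho(1-\rho)$ and a direct estimate, together with the extra positive term $\int w_n^2/(\rho(1-\rho))$, forces $c_n\to0$ along any sequence whose energy stays below $\delta$. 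Hence in the limit the minimizer of $\Lambda_1$ is feasible for $\Lambda$, giving $\Lambda\le\Lambda_1$.

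The main obstacle I anticipate is precisely the bookkeeping in the previous paragraph: making rigorous the claim that a low-energy ($<\delta$) configuration cannot ``spend'' any weight on the component of $v$ that violates the orthogonality constraint $\int_\Omega\rho(1-\rho)v\,\dd x=0$. This needs a clean lower bound of the form $\int_\Omega v\,(\LinearizedEnergyOp v)\,\dd x \ge \delta\,\big(\int_\Omega\rho(1-\rho)v\,\dd x\big)^2\big/\int_\Omega\rho(1-\rho)\,\dd x + (\text{quantity}\ge\Lambda\cdot\|v^\perp\|^2)$, which one obtains by testing $\LinearizedEnergyOp$ against the constant function and using $\LinearizedEnergyOp 1=\delta-\rho(1-\rho)h(\rho)$ with $h\equiv1$ for Model~(II), combined with a Cauchy--Schwarz/orthogonal-decomposition argument. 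The symmetric statement starting from $\Lambda_1<\delta$ is handled by running the same decomposition with the roles reversed. Once this lower bound is in hand, both inequalities combine to give $\Lambda=\Lambda_1$, and the routine convergence/lower-semicontinuity arguments (identical to those in the proof of Proposition~\ref{Prop:Minimization}) fill in the remaining details, which I would leave to the reader.
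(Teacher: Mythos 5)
Your first half is fine and coincides with the paper's: the choice $u=\rho\,(1-\rho)\,v$ for $v$ admissible in \eqref{Eqn:lambda} is admissible for $\Lambda_1$ and gives $2\,\mathrm L_D[u,v]=\int_\Omega v\,(\LinearizedEnergyOp\,v)\,\dd x$, hence $\Lambda_1\le\Lambda$. Your completion of the square, $2\,\mathrm L_D[u,v]=\int_\Omega v\,(\LinearizedEnergyOp\,v)\,\dd x+\int_\Omega\frac{w^2}{\rho\,(1-\rho)}\,\dd x$ for $u=\rho\,(1-\rho)\,v+w$, is also correct, and you rightly identify the real difficulty: the constraint $\int_\Omega u\,\dd x=0$ only forces $\int_\Omega\rho\,(1-\rho)\,v\,\dd x=-\int_\Omega w\,\dd x$, not $=0$. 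The gap is in the estimate you propose to close this. The claim that $\int_\Omega\chi\,(\LinearizedEnergyOp\,\chi)\,\dd x$ is ``controlled below by $\delta$'' along the constraint-violating direction has the wrong sign: for Model (II) one has $\LinearizedEnergyOp\,1=\delta-\rho\,(1-\rho)$, so the Rayleigh quotient of $\LinearizedEnergyOp$ on constants equals $\delta-\frac1{|\Omega|}\int_\Omega\rho\,(1-\rho)\,\dd x<\delta$, and your displayed lower bound $\int_\Omega v\,(\LinearizedEnergyOp\,v)\,\dd x\ge\delta\,\big(\int_\Omega\rho\,(1-\rho)\,v\,\dd x\big)^2/\int_\Omega\rho\,(1-\rho)\,\dd x+\Lambda\,\|v^\perp\|^2$ already fails at $v\equiv1$ whenever $\Lambda\ge0$ (a case the lemma must cover, since $\Lambda$ is nonnegative at local minimizers by Lemma~\ref{Lem:LocalMin}). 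The value $\delta$ is reached on constants only by the \emph{combined} quantity: minimizing $\int_\Omega w^2/(\rho\,(1-\rho))\,\dd x$ under $\int_\Omega w\,\dd x=-\int_\Omega\rho\,(1-\rho)\,v\,\dd x$ gives $w=-\bar v\,\rho\,(1-\rho)$ and
\[
2\,\mathrm L_D[u,v]=\int_\Omega v\,(\LinearizedEnergyOp\,v)\,\dd x+\bar v^2\int_\Omega\rho\,(1-\rho)\,\dd x\,,\qquad \bar v=\frac{\int_\Omega v\,\rho\,(1-\rho)\,\dd x}{\int_\Omega\rho\,(1-\rho)\,\dd x}\,,
\]
and it is this sum that equals $\delta\int_\Omega v^2\,\dd x$ at $v\equiv1$ (this is precisely why $(0,1)$ is a $\delta$-eigenfunction of $-\LinearOp$). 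The penalization of the bad component therefore comes entirely from the $w$-term that is absent from your displayed bound, so the argument that $c_n\to0$ does not go through as written.

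For comparison, the paper closes the second half differently: it computes the optimal $u$ under $\int_\Omega u\,\dd x=0$ explicitly as $u=(v-\bar v)\,\rho\,(1-\rho)$, obtains $2\,\mathrm L_D[u,v]=\kappa\int_\Omega|\nabla v|^2\,\dd x+\delta\int_\Omega v^2\,\dd x-\int_\Omega\rho\,(1-\rho)\,|v-\bar v|^2\,\dd x$, and then substitutes $v=w+c$ with $\int_\Omega w\,\rho\,(1-\rho)\,\dd x=0$ and $c$ constant, so that $\Lambda_1-\delta$ becomes an infimum over $(w,c)$ of a quotient whose numerator depends on $w$ alone; under the hypothesis $\Lambda<\delta$ or $\Lambda_1<\delta$ that numerator is negative at the optimum, so the infimum in $c$ is attained by minimizing the denominator, which collapses the problem back onto the constrained one and yields $\Lambda_1=\Lambda$. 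If you want to salvage your approach, you should work with the shifted quantity $\Lambda_1-\delta$ and this change of variables, rather than with a pointwise lower bound on $\LinearizedEnergyOp$ alone.
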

\begin{proof} If $(\rho,D)$ is a critical point of $\L$, the analysis of Section~\ref{Sec:parametrization} shows that $\rho$ is given by~\eqref{rhoD} with $\phi=D-\barD$ and $\barD$ determined by~\eqref{Eqn:LagrangeMultiplier}. Consider first the minimization problem
\[
\infimum{\int_\Omega v\,\rho\,(1-\rho)\,\dd x=0}{\int_\Omega v^2\,\dd x=1}\mathrm L_D[u,v]\;.
\]
As in Lemma~\ref{Lem:LocalMin}, the optimization with respect to $u$ shows that $u=v\,\rho\,(1-\rho)$ and it is then straightforward to get that $2\,\mathrm L_D[u,v]=\int_\Omega v\,(\LinearizedEnergyOp\,v)\,\dd x=\Lambda$. Additionally, we know that $v$ solves the Euler-Lagrange equation
\be{Opt}
\LinearizedEnergyOp\,v=-\,\kappa\,\Delta v+\delta\,v-v\,\rho\,(1-\rho)=\Lambda\,v-\mu\,\rho\,(1-\rho)
\ee
for some Lagrange multiplier $\mu$ and we have $\int_\Omega u\,\dd x=\int_\Omega v\,\rho\,(1-\rho)\,\dd x=0$. This proves that $\Lambda_1\le\Lambda$.

Now, consider a minimizer $(u,v)$ for $\Lambda_1$. We find that
\[
u=(v-\bar v)\,\rho\,(1-\rho)\quad\mbox{with}\quad\bar v:=\frac{\int_\Omega v\,\rho\,(1-\rho)\,\dd x}{\int_\Omega\rho\,(1-\rho)\,\dd x}\;.
\]
Moreover $v$ solves the Euler-Lagrange equation
\be{Opt2}
-\,\kappa\,\Delta v+\delta\,v-v\,\rho\,(1-\rho)=\Lambda_1\,v-\bar v\,\rho\,(1-\rho)\;.
\ee

Hence we have found that $2\,\mathrm L_D[u,v]=\kappa\int_{\Omega}|\nabla v|^2\,\dd x+\delta\int_{\Omega}v^2\,\dd x-\int_{\Omega}\rho\,(1-\rho)\,|v-\bar v|^2\,\dd x$, so that
\begin{multline*}
\Lambda_1-\delta=\inf_{v\not\equiv0}\frac{\kappa\int_{\Omega}|\nabla v|^2\,\dd x-\int_{\Omega}\rho\,(1-\rho)\,|v-\bar v|^2\,\dd x}{\int_{\Omega}v^2\,\dd x}\\=\infimum{\int_\Omega v\,\rho\,(1-\rho)\,\dd x=0}{v\not\equiv0\,,\;c\in\R}\frac{\kappa\int_{\Omega}|\nabla v|^2\,\dd x-\int_{\Omega}\rho\,(1-\rho)\,v^2\,\dd x}{\int_{\Omega}v^2\,\dd x+c^2}\\
=\infimum{\int_\Omega v\,\rho\,(1-\rho)\,\dd x=0}{v\not\equiv0}\frac{\kappa\int_{\Omega}|\nabla v|^2\,\dd x-\int_{\Omega}\rho\,(1-\rho)\,v^2\,\dd x}{\int_{\Omega}v^2\,\dd x}\;,
\end{multline*}
where the last equality holds under the condition that either $\Lambda<\delta$ or $\Lambda_1<\delta$. Hence we have shown that $\Lambda_1-\delta=\Lambda-\delta$, which concludes the proof.\end{proof}

\begin{rem} With no constraint, it is straightforward to check that $\delta$ is an eigenvalue of $\LinearOp$, and $(u,v)=(0,1)$ an eigenfunction. Hence, as soon as $\Lambda_1<\delta$, we have that $\int_\Omega u\,\dd x=0$ if $(u,v)$ is a minimizer for~$\Lambda_1$, because of \eqref{Opt2}. This justifies why the condition of either $\Lambda<\delta$ or $\Lambda_1<\delta$ enters in the statement of Lemma~\ref{Lem:ACharacterization}. \end{rem}

In the case of Model~(II), we can get a bound on the growth of the unstable mode.
\begin{cor}\label{Cor:DynamicalVsVariational} Consider Model (II) only and assume that $(\rho,D)$ is a critical point of $\L$ under the mass constraint~\eqref{mass_constraint}. If $\Lambda$ defined by \eqref{Eqn:lambda} is negative, then we have
\[
\infimum{\int_\Omega u\,\dd x=0}{\int_\Omega v^2\,\dd x=1}\frac{\mathrm I_D[u,v]}{\mathrm L_D[u,v]}\le\Lambda
\]
and the growth rate of the most unstable mode of~\eqref{LinearizedFlow} is at least $2\,|\Lambda|$.
\end{cor}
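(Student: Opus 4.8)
The plan is to exploit the self-adjointness of $\LinearOp$ with respect to $\scalar\cdot\cdot$ together with the entropy--entropy production identity \eqref{Eqn:Entropy-EntropyProduction}. First I would work on the constrained space $\{(u,v)\,:\,\int_\Omega u\,\dd x=0\}$, where, by Lemma~\ref{Lem:SelfAdjointness}, the bilinear form $\scalar\cdot\cdot$ (modulo the kernel of $\LinearizedEnergyOp$) is a genuine scalar product with $2\,\mathrm L_D[u,v]=\scalar{(u,v)}{(u,v)}$, and $\LinearOp$ is self-adjoint with $\scalar{(u,v)}{\LinearOp\,(u,v)}=-\,2\,\mathrm I_D[u,v]\le 0$. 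If $\Lambda<0$, then by Lemma~\ref{Lem:ACharacterization} (since $\Lambda<0<\delta$) we have $\Lambda_1=\Lambda<0$, so $\mathrm L_D$ is genuinely negative on part of this space: there is an $(u_\star,v_\star)$ with $\int_\Omega u_\star\,\dd x=0$, $\int_\Omega v_\star^2\,\dd x=1$, and $2\,\mathrm L_D[u_\star,v_\star]=\Lambda_1<0$.

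The key step is a Rayleigh-quotient comparison. Since $-\LinearOp$ is self-adjoint for $\scalar\cdot\cdot$, the quantity $\scalar{(u,v)}{-\LinearOp\,(u,v)}=2\,\mathrm I_D[u,v]$ is, relative to the (possibly indefinite, but negative on the relevant cone) form $2\,\mathrm L_D[u,v]=\scalar{(u,v)}{(u,v)}$, controlled by the spectrum of $-\LinearOp$. Concretely, I would observe that on the invariant subspace where $\mathrm L_D<0$, minimizing $\mathrm I_D[u,v]/\mathrm L_D[u,v]$ (a ratio of two negative quantities on that cone, hence a well-posed variational problem after sign bookkeeping) picks out the eigenvalue of $-\LinearOp$ with the most negative value, i.e. the fastest-growing unstable mode. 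The first inequality of the Corollary,
\[
\infimum{\int_\Omega u\,\dd x=0}{\int_\Omega v^2\,\dd x=1}\frac{\mathrm I_D[u,v]}{\mathrm L_D[u,v]}\le\Lambda,
\]
then follows by plugging the test pair $(u_\star,v_\star)$ from Lemma~\ref{Lem:ACharacterization} into the quotient: its denominator is $\mathrm L_D[u_\star,v_\star]=\Lambda/2<0$, and one checks from the Euler--Lagrange equation \eqref{Opt2} satisfied by $v_\star$ that $2\,\mathrm I_D[u_\star,v_\star]= -\scalar{(u_\star,v_\star)}{\LinearOp\,(u_\star,v_\star)} \le \Lambda\cdot\mathrm L_D[u_\star,v_\star]\cdot 2 = \Lambda^2$ — actually I would track whether equality or only an inequality holds here; in either case the ratio is $\le\Lambda$ after dividing by the negative number $\mathrm L_D[u_\star,v_\star]$.

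Finally, to pass from the variational bound to the statement about the growth rate, I would linearize along the flow \eqref{LinearizedFlow}: for a solution $(u,v)$ with $\mathrm L_D[u,v]<0$ initially (and with $\int_\Omega u\,\dd x=0$ preserved), \eqref{Eqn:Entropy-EntropyProduction} gives $\frac{d}{dt}\mathrm L_D = -2\,\mathrm I_D \le 2\,\Lambda\,\mathrm L_D$ (using $\mathrm I_D/\mathrm L_D\le\Lambda$ and $\mathrm L_D<0$, so $-2\mathrm I_D = -2(\mathrm I_D/\mathrm L_D)\mathrm L_D \ge -2\Lambda\mathrm L_D$ — sign care needed, with $\Lambda<0$ and $\mathrm L_D<0$ this reads $\frac{d}{dt}\mathrm L_D\le 2\Lambda\,\mathrm L_D$, and since $\mathrm L_D<0$ this forces $|\mathrm L_D|$ to grow at least like $e^{2|\Lambda|t}$). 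A Grönwall argument then yields $\mathrm L_D[u(t,\cdot),v(t,\cdot)]\le e^{2\Lambda t}\,\mathrm L_D[u(0,\cdot),v(0,\cdot)]$, i.e. $|\mathrm L_D|$ grows at rate at least $2|\Lambda|$, whence some Sobolev norm of $(u,v)$ grows at rate at least $2|\Lambda|$, establishing the claimed lower bound on the growth of the most unstable mode. The main obstacle I anticipate is the sign bookkeeping in the Rayleigh-quotient step — the form $\mathrm L_D$ is indefinite, so one must restrict carefully to the negative cone and justify that the variational characterization of the extreme eigenvalue of $-\LinearOp$ still goes through there — together with handling the kernel of $\LinearizedEnergyOp$ correctly, as flagged in the remark after Lemma~\ref{Lem:SelfAdjointness}.
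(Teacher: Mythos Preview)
Your core idea is correct and matches the paper's: plug the minimizer from the variational characterization of $\Lambda$ into the quotient $\mathrm I_D/\mathrm L_D$ as a test pair, then use Gr\"onwall on \eqref{Eqn:Entropy-EntropyProduction} for the growth-rate claim. The growth-rate half of your argument is essentially identical to the paper's.

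Where you diverge is in the execution of the first half. You route the argument through self-adjointness of $\LinearOp$ for $\scalar\cdot\cdot$, spectral theory on the negative cone of an indefinite form, and Lemma~\ref{Lem:ACharacterization} with~\eqref{Opt2}; you then leave the computation of $\mathrm I_D[u_\star,v_\star]$ as ``one checks\ldots'' and flag sign bookkeeping and the kernel as obstacles. None of this machinery is needed. The paper instead takes $v$ to be the minimizer in~\eqref{Eqn:lambda}, so that $\int_\Omega v\,\rho(1-\rho)\,\dd x=0$ and~\eqref{Opt} holds, and sets $u=v\,\rho(1-\rho)$ directly. With this choice $\tfrac{u}{\rho(1-\rho)}-v\equiv 0$ and $-\kappa\Delta v+\delta v-u=\LinearizedEnergyOp v$, so $2\,\mathrm I_D[u,v]=\int_\Omega(\LinearizedEnergyOp v)^2\,\dd x$ and $2\,\mathrm L_D[u,v]=\int_\Omega v\,(\LinearizedEnergyOp v)\,\dd x$; substituting $\LinearizedEnergyOp v=\Lambda v-\mu\,\rho(1-\rho)$ from~\eqref{Opt} and using the two constraints gives
\[
\frac{\mathrm I_D[u,v]}{\mathrm L_D[u,v]}=\frac{\Lambda^2+\mu^2\!\int_\Omega\rho^2(1-\rho)^2\,\dd x}{\Lambda}=\Lambda+\frac{\mu^2}{\Lambda}\int_\Omega\rho^2(1-\rho)^2\,\dd x\le\Lambda
\]
since $\Lambda<0$. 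This is a two-line computation with no indefinite-form spectral theory, no appeal to Lemma~\ref{Lem:ACharacterization}, and no sign subtleties beyond the final inequality. Your route via~\eqref{Opt2} also works (and in fact yields equality once you observe $\bar v_\star=0$ at the minimum when $\Lambda_1<\delta$), but the paper's choice of~\eqref{Opt} makes the algebra transparent.
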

\begin{proof} Consider a function $v$ given by \eqref{Opt} with $\int_\Omega v\,\rho\,(1-\rho)\,\dd x=0$, $\int_\Omega v^2\,\dd x=1$, $u=v\,\rho\,(1-\rho)$ and take $(u,v)$ as a test function. Then
\[
\frac{\mathrm I_D[u,v]}{\mathrm L_D[u,v]}=\frac{\int_\Omega(\LinearizedEnergyOp\,v)^2\,\dd x}{\int_\Omega v\,(\LinearizedEnergyOp\,v)\,\dd x}=\frac{\int_\Omega(\Lambda\,v-\mu\,\rho\,(1-\rho))^2\,\dd x}{\int_\Omega(\Lambda\,v-\mu\,\rho\,(1-\rho))\,v\,\dd x}=\Lambda+\frac{\mu^2}\Lambda\int_\Omega\rho^2\,(1-\rho)^2\,\dd x\le\Lambda\;.
\]
Using \eqref{Eqn:Entropy-EntropyProduction}, if $\mathrm L_D[u,v]$ is negative, then we get $\frac d{dt}\mathrm L_D[u,v]\le\,-\,2\,\Lambda\,\mathrm L_D[u,v]$, thus proving that $\mathrm L_D[u,v](t)\le\mathrm L_D[u,v](0)\,e^{2\,|\Lambda|\,t}$ for any $t\ge0$.\end{proof}

The result of Corollary~\ref{Cor:DynamicalVsVariational} on the most unstable mode can be rephrased in terms of standard norms. By definition of $\mathrm L_D$, we get that $\int_\Omega (u^2+v^2)\,\dd x\ge2\int_\Omega u\,v\,\dd x\ge2\,|\mathrm L_D[u,v]|\ge2\,|\mathrm L_D[u,v](0)|\,e^{2\,|\Lambda|\,t}$ for any~$t\ge0$.

Summarizing, we have shown the following result.
\begin{thm}\label{Thm:Main2} Let $M>0$ and consider the case of Model (II). Assume that $(\rho,D)$ is a stationary solution of \eqref{rho}--\eqref{bcD} such that~\eqref{mass_constraint} is satisfied and let $\phi=D-\barD$ with $\barD$ satisfying \eqref{Eqn:LagrangeMultiplier}. Then the following properties hold true.
\begin{enumerate}
\item[(i)] Neither dynamical instability nor variational instability can occur if $(\rho,D)$ is a local minimizer of~$\L$ under the mass constraint \eqref{mass_constraint} or, equivalently, if $\phi$ is a local minimizer of $\E$ such that \eqref{mass_constraint} and~\eqref{rhoD} hold.
\item[(ii)] If $(\rho,D)$ is a local minimizer of $\L$ under the mass constraint \eqref{mass_constraint}, then any solution $(u,v)$ of \eqref{LinearizedFlow} converges towards $(0,0)$ when the initial datum is assumed to be in the orthogonal of the kernel of~$\LinearOp$ and with sufficiently low energy.
\item[(iii)] Dynamical stability implies variational stability.
\item[(iv)] Variational instability and dynamical instability are equivalent and, with above notations, $\Lambda_1=\Lambda$.
\end{enumerate}
\end{thm}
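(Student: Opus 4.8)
The plan is to assemble Theorem~\ref{Thm:Main2} from the preparatory lemmas of this section, matching each of the four assertions to an already-established fact and supplying only the short logical glue. Statement~(i) follows by combining Lemma~\ref{Lem:LocalMin} with Proposition~\ref{Prop:Equivalence}: if $\phi$ is a local minimizer of $\E$ satisfying \eqref{mass_constraint} and \eqref{rhoD}, then by Proposition~\ref{Prop:Equivalence} the pair $(\rho,D)$ with $\rho=1/(1+e^{-\phi})$ is a local minimizer of $\L$ under the mass constraint, and conversely; Lemma~\ref{Lem:LocalMin} then gives $\Lambda\ge0$ (no variational instability), and the variational characterization of $\Lambda_1$ via $\mathrm L_D$ together with positivity of $\mathrm L_D$ at a local minimizer (again Lemma~\ref{Lem:LocalMin}, applied with the constraint $\int_\Omega u\,\dd x=0$) rules out dynamical instability.

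For~(ii), I would invoke Lemma~\ref{Lem:SelfAdjointness}: at a local minimizer of $\L$ the bilinear form $\scalar{\cdot}{\cdot}$ is a scalar product on the orthogonal complement of the kernel of $\LinearizedEnergyOp$, the operator $\LinearOp$ is self-adjoint and dissipative there (the identity $\frac d{dt}\mathrm L_D[u,v]=-2\,\mathrm I_D[u,v]=\scalar{(u,v)}{\LinearOp\,(u,v)}\le0$), so any solution of \eqref{LinearizedFlow} starting in that orthogonal complement — equivalently, with sufficiently low energy, so that it does not pick up the constant-in-$v$ direction associated with the eigenvalue $\delta$ — decays to $(0,0)$. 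One should remark, as done after Lemma~\ref{Lem:SelfAdjointness}, that "the kernel of $\LinearizedEnergyOp$" is understood as $\{(u,v):v\in\mathrm{Ker}(\LinearizedEnergyOp)\}$, so the statement is exactly that lemma's conclusion. Statement~(iii) is the contrapositive of the dynamical-implies-variational direction already recorded: if $\phi$ is variationally unstable, $\Lambda<0$, and then by Lemma~\ref{Lem:ACharacterization} we have $\Lambda_1\le\Lambda<0$, so the infimum defining dynamical stability is negative, i.e. $(\rho,D)$ is dynamically unstable; contraposing yields~(iii).

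Statement~(iv) is the substantive one and where the argument has to be handled with care. One direction is~(iii). For the converse, suppose $(\rho,D)$ is dynamically unstable, so $\Lambda_1<0$; since $0<\delta$ this means $\Lambda_1<\delta$, and Lemma~\ref{Lem:ACharacterization} then forces $\Lambda=\Lambda_1$, hence $\Lambda<0$ and $\phi$ is variationally unstable; combined with~(iii) the two instabilities are equivalent, and on the unstable set $\Lambda_1=\Lambda$. The main obstacle is purely one of bookkeeping: making sure the hypothesis "critical point of $\L$ under the mass constraint" (rather than "minimizer") is what Lemma~\ref{Lem:ACharacterization} actually requires, and checking that the dichotomy "$\Lambda<\delta$ or $\Lambda_1<\delta$" is automatically met in the unstable regime because $\delta>0$ — which it is, since negativity of either quantity trivially implies it is less than $\delta$. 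No new computation is needed; the proof is a short synthesis, and I would simply write it as: "(i) follows from Lemma~\ref{Lem:LocalMin} and Proposition~\ref{Prop:Equivalence}; (ii) is Lemma~\ref{Lem:SelfAdjointness}; (iii) follows from Lemma~\ref{Lem:ACharacterization}; and (iv) follows from~(iii) together with Lemma~\ref{Lem:ACharacterization}, using $\delta>0$."
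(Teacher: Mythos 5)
Your proposal is correct and matches the paper's own treatment: the paper offers no separate argument, introducing the theorem with ``Summarizing, we have shown the following result,'' i.e.\ exactly the synthesis you describe of Proposition~\ref{Prop:Equivalence}, Lemma~\ref{Lem:LocalMin}, Lemma~\ref{Lem:SelfAdjointness} and Lemma~\ref{Lem:ACharacterization} (with $\Lambda_1\le\Lambda$ giving (iii) and the case $\Lambda_1<0<\delta$ forcing $\Lambda=\Lambda_1$ for (iv)). Your bookkeeping remarks about ``critical point versus minimizer'' and the $\delta>0$ dichotomy are the right points to check and they go through.
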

On the contrary, no clear relation between variational and dynamical (in)stability is known in the case of Model (I), except the result of Lemma~\ref{Lem:CriticalPoints}, which is not so easy to use from a numerical point of view.

\section{Numerical results}\label{Sec:Numerics}

Let us summarize our findings on radial stationary solutions of \eqref{rho}--\eqref{bcD}, with parameters $\delta$ and $\kappa$ in the range discussed in Section~\ref{Sec:NumericalRange}, when $\Omega$ is the unit ball in $\R^d$, with $d=1$ or $d=2$. Our results deal with either Model (I) or Model (II), defined respectively by \eqref{ModelI} and \eqref{ModelII}, as follows:
\begin{enumerate}
\item[(i)] We compute the branches of monotone, non-constant, radial solutions that bifurcate from constant solutions for the two models, in dimensions $d=1$ and $d=2$.
\item[(ii)] We study variational and dynamical stability of these solutions. The two notions coincide for Model (II), which is partially explained with the help of the Lyapunov functional.
\item[(iii)] Dynamical stability holds up to the turning point of the branch when it is parametrized by the mass for Model (II) in dimensions $d=1$ and $d=2$. This is also true in dimension $d=1$ for Model~(I).
\item[(iv)] In dimension $d=1$, the variational stability of the branch of monotone, non-constant solutions is more restrictive than the dynamical stability in case of Model (I).
\end{enumerate}
Before entering in the details, let us observe that bifurcation diagrams are more complicated in dimension $d=2$ than for $d=1$, and that the lack of a Lyapunov functional makes the study of Model (I) significantly more difficult.

\medskip All computations are based on the shooting method presented in Proposition~\ref{Prop:Parametrization}. This allows us to find all radially symmetric stationary solutions, as the range of parameter $a$ for which solutions exist is bounded according to Corollary~\ref{cor:bounds}. Hence we are left with a single ordinary differential equation, which can be solved using standard numerical methods. Because of the smallness of the parameter $\kappa$, the shooting criterion $\varphi_a'(1)=0$ has a rather stiff dependence on $a$. This makes directly finding all zeros of the criterion for a given $\phi_0$ difficult, so in practice we use perturbation and continuation methods to parametrize the whole branch of monotone, plateau-like solutions.

The computation of the spectrum of the linearized evolution operator \eqref{LinearizedFlow} is done using a basis of cosines, normalized and scaled to meet the boundary conditions. This allows for fast decomposition of the coefficients by FFT. In the case $d=2$, such a basis in not orthogonal, which is taken into account using a mass matrix during diagonalization. In cases where the constraints cannot be enforced directly at the basis level, a Rayleigh quotient minimization step is done, on the orthogonal of the constrained space.

Numerical computations have been made entirely using the \texttt{NumPy} and \texttt{SciPy} Python libraries, freely available from \texttt{http://scipy.org}. These make use of reference numerical libraries \texttt{LAPACK} and \texttt{odepack}.

We start by considering constant solutions and make use of the notations of Section~\ref{Sec:Constants}.

\begin{figure}[ht]\includegraphics[width=.49\textwidth]{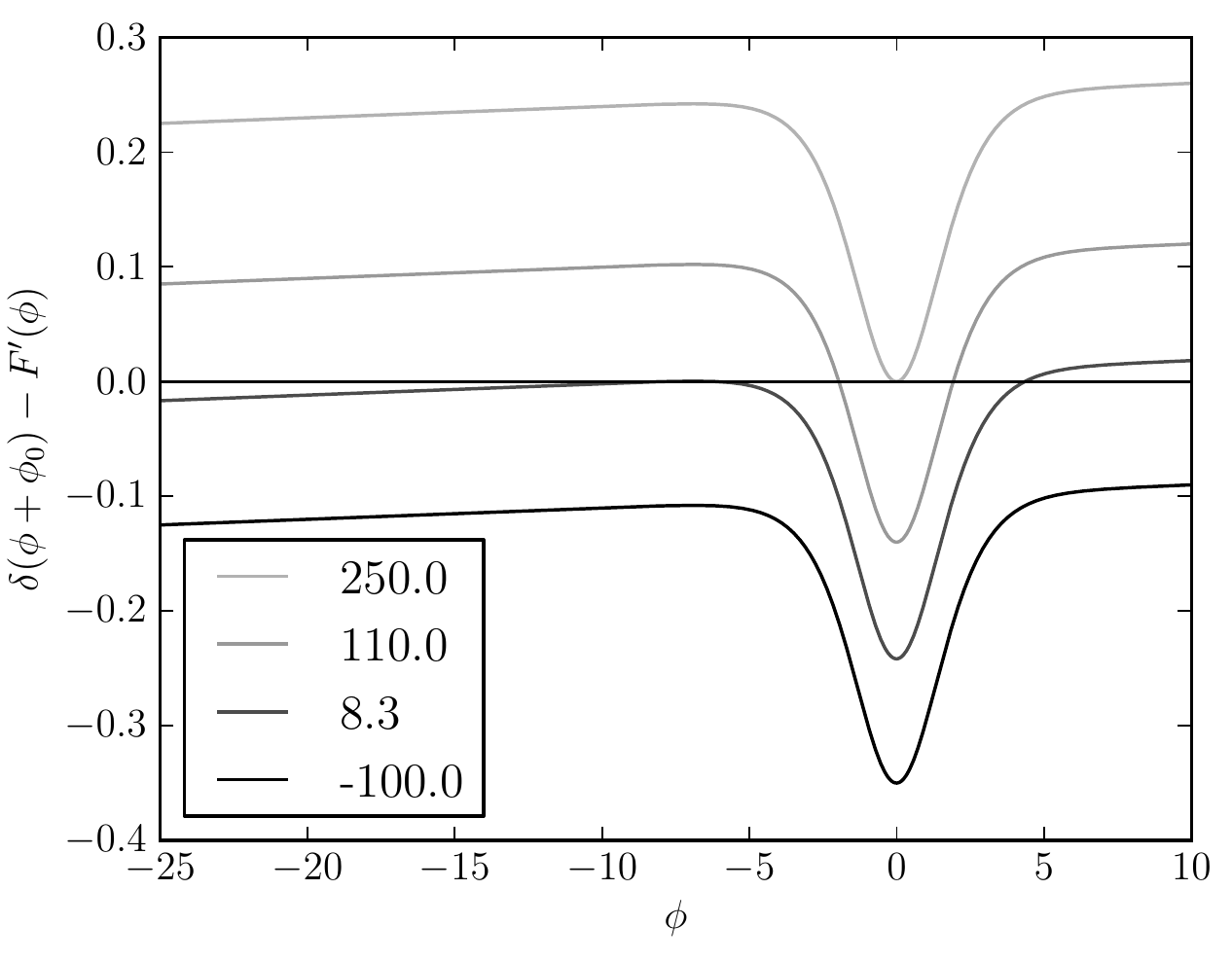}\caption{\small Plot of $\phi\mapsto\delta\,(\phi+\barD)-f(\phi)=\delta\,(\phi+\barD)-F'(\phi)=\delta\,(\barD-k(\phi))$ for various values of $\barD$. Each zero of the function provides a constant stationary solution of \eqref{rho}--\eqref{bcD}. The plot shown here corresponds to Model (I), with $\delta=10^{-3}$.}\label{Fig:CstSoln}\end{figure}
\begin{figure}[ht]\includegraphics[width=.49\textwidth]{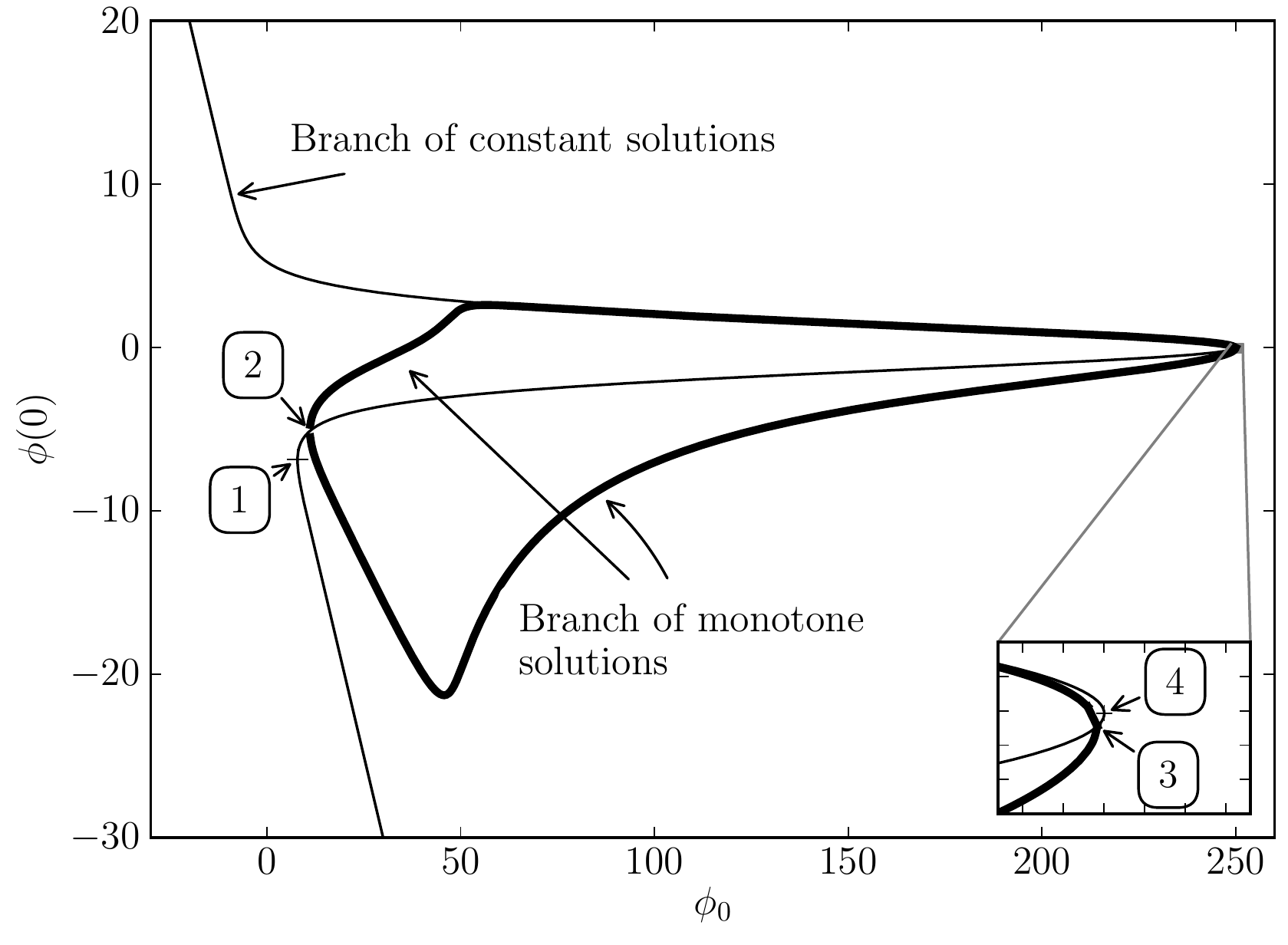}\caption{\small Parametrization by $\barD$ of the branches of solutions in case of Model (I), $d=1$, with $\delta=10^{-3}$, $\kappa=5\times10^{-4}$ and $\Omega=(0,1)$. There are either one or three constant solutions for a given value of $\phi_0$. Strictly monotone solutions correspond to the bold curve. Notice that on the upper part of the graph the two branches are close but distinct.}\label{Fig:Branches}\end{figure}
Let us comment on the plots of Fig.~\ref{Fig:Branches}.
\begin{itemize}
\item[] \ovalbox{1} First turning point: $\barD=\phi_0^-$, on the branch of constant solutions: for lower values of $\barD$, there is only one constant solution $\phi\equiv\phi(0)$, which converges to $+\infty$ as $\barD\to-\infty$.
\item[] \ovalbox{2} and \ovalbox{3} Non constant solutions bifurcate from constant solutions, which are unstable in the corresponding interval for $\barD$. The solutions of the two branches correspond to monotone solutions, either increasing or decreasing, and always bounded from above and from below by constant solutions.
\item[] \ovalbox{4} Second turning point: $\barD=\phi_0^+$, on the branch of constant solutions: for higher values of $\barD$, there is only one constant solution $\phi\equiv\phi(0)$, which converges to $-\infty$ as $\barD\to+\infty$.
\end{itemize}

The dependence of plateau-like solutions on parameters $\barD$ and $\kappa$ is shown in Fig.~\ref{phi_plots}.
\vspace*{-6pt}
\DeuxFigures{ht}{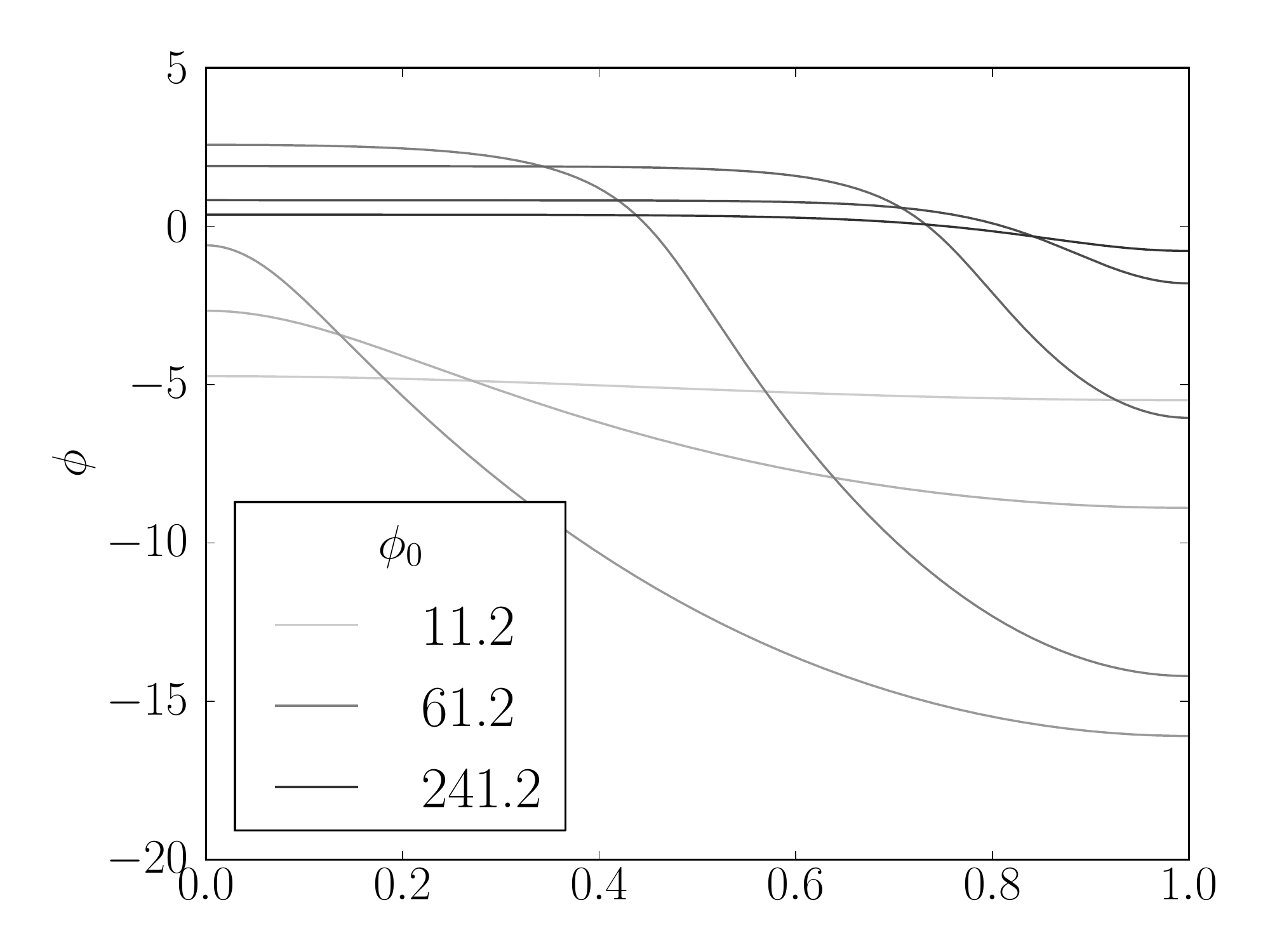}{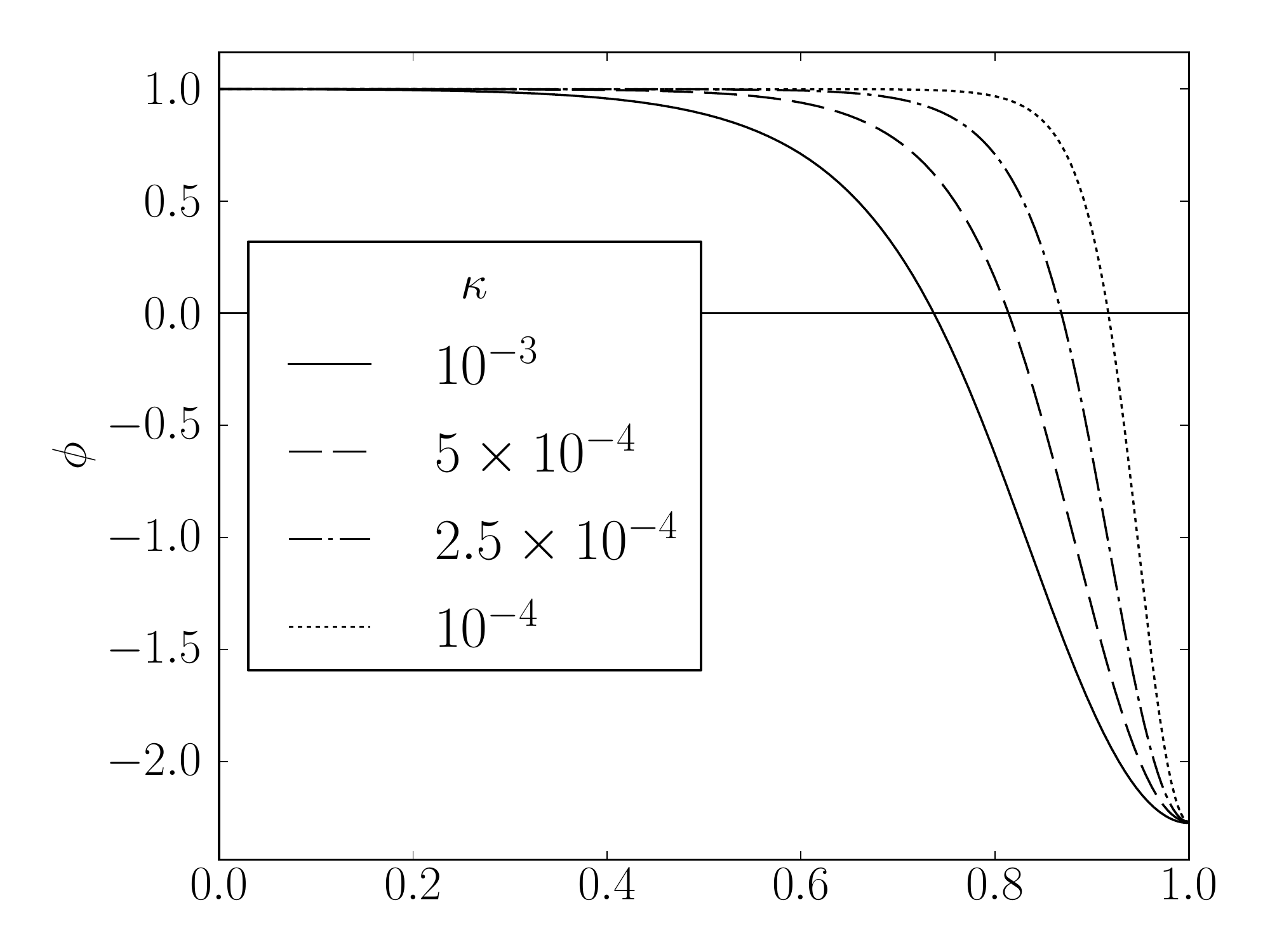}{In the case of Model (I), $d=1$, $\delta=10^{-3}$, we consider various profiles for $x\mapsto\phi(x)$ with $x\in(0,1)=\Omega$ either (left) as $\barD$ varies and $\kappa=5\times10^{-4}$, or (right) as $\kappa$ varies, with $\phi(0)=1$.}{phi_plots}

Next we consider monotone, plateau-like solutions. In Figs.~\ref{Fig:5-6} and~\ref{Fig7-8}, the shaded region corresponds to masses for which constant solutions are unstable.

\DeuxFigures{ht}{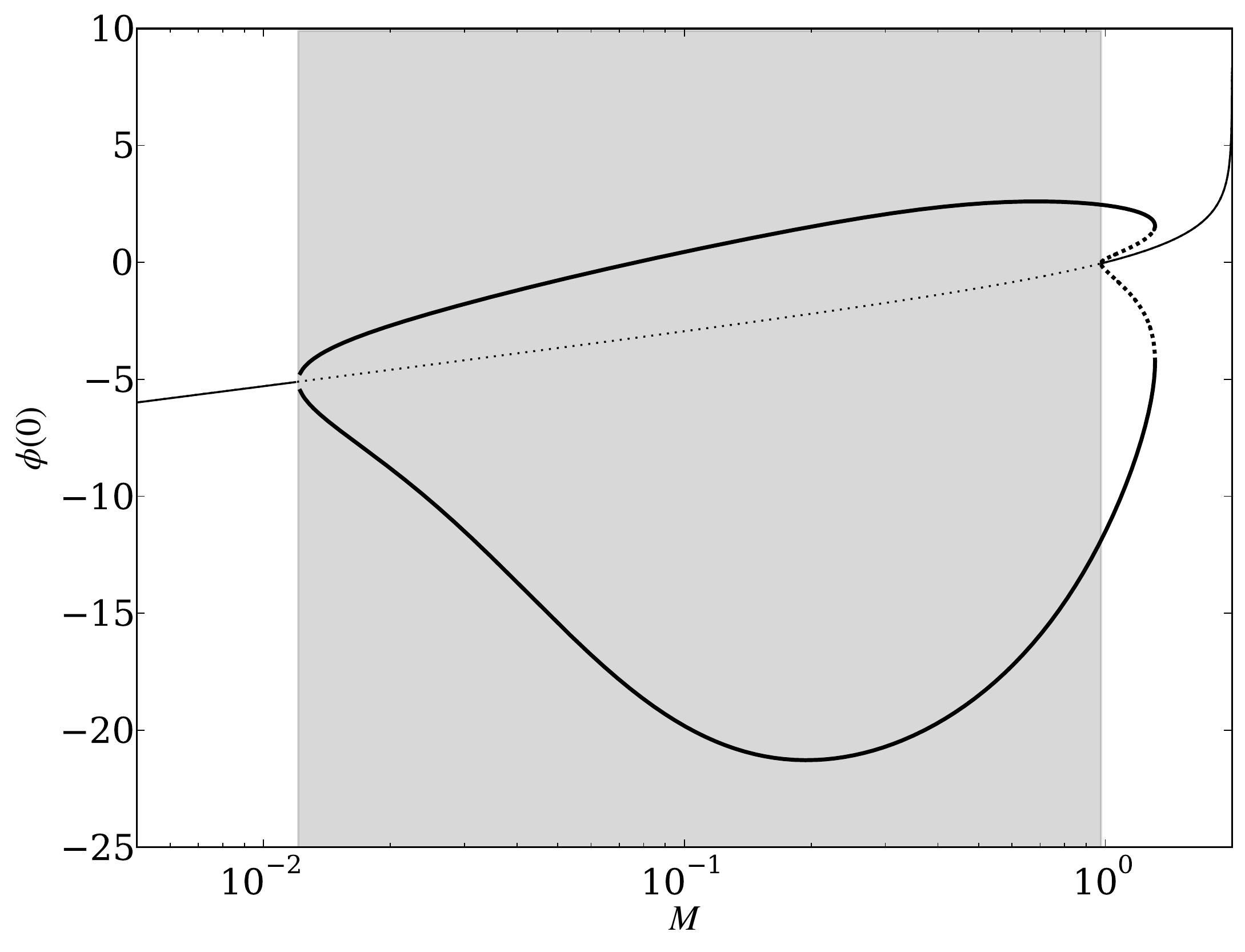}{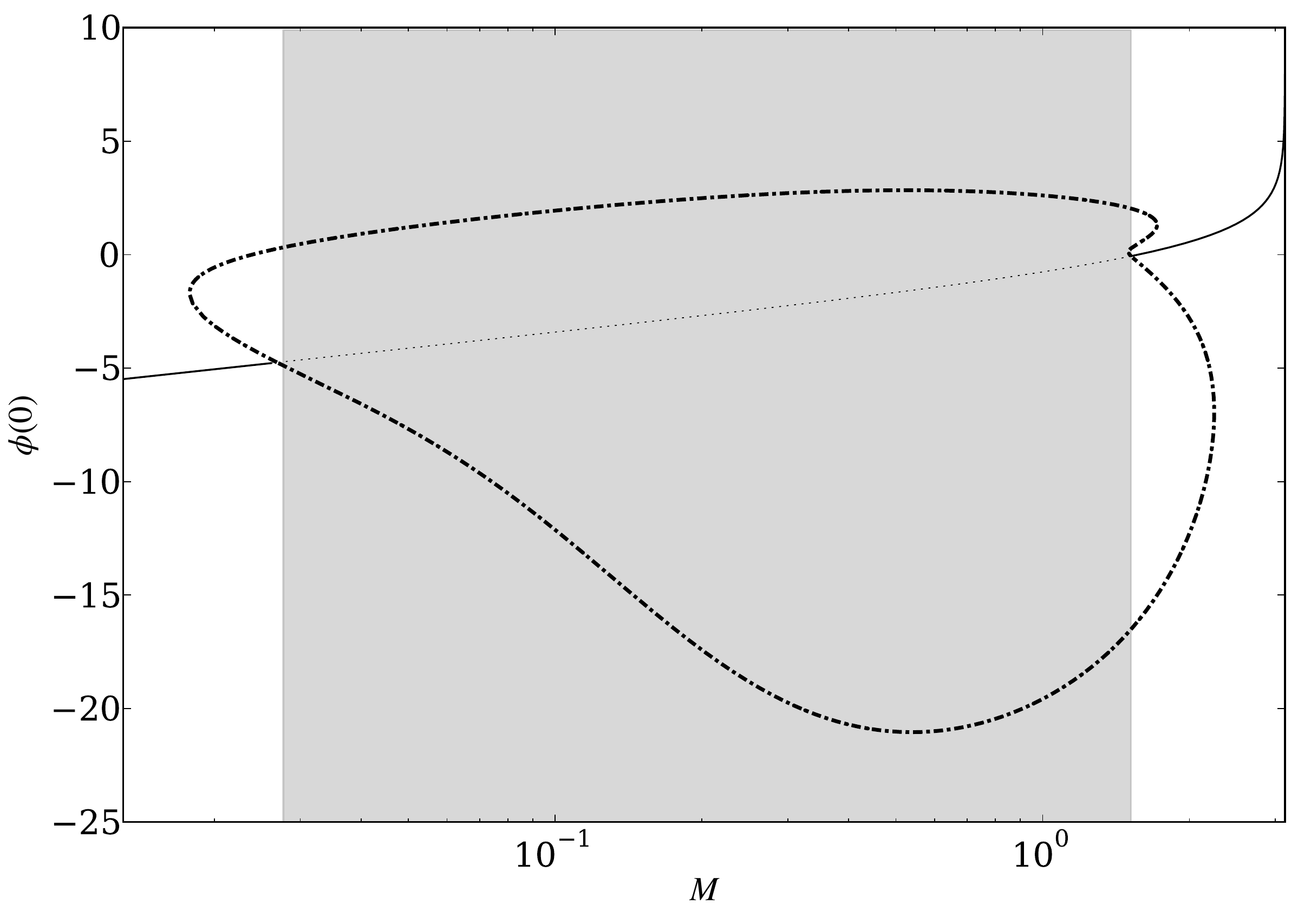}{Model (I), $\kappa=5\times10^{-4}$, $\delta=10^{-3}$. Thin lines represent constant solutions and bold ones the plateau-like solutions. For readability purposes we use a logarithmic scale for the mass. \emph{Left:} $d=1$. The dotted part of each branch shows where solutions are dynamically unstable. \emph{Right:} $d=2$.}{Fig:5-6}
\DeuxFigures{ht}{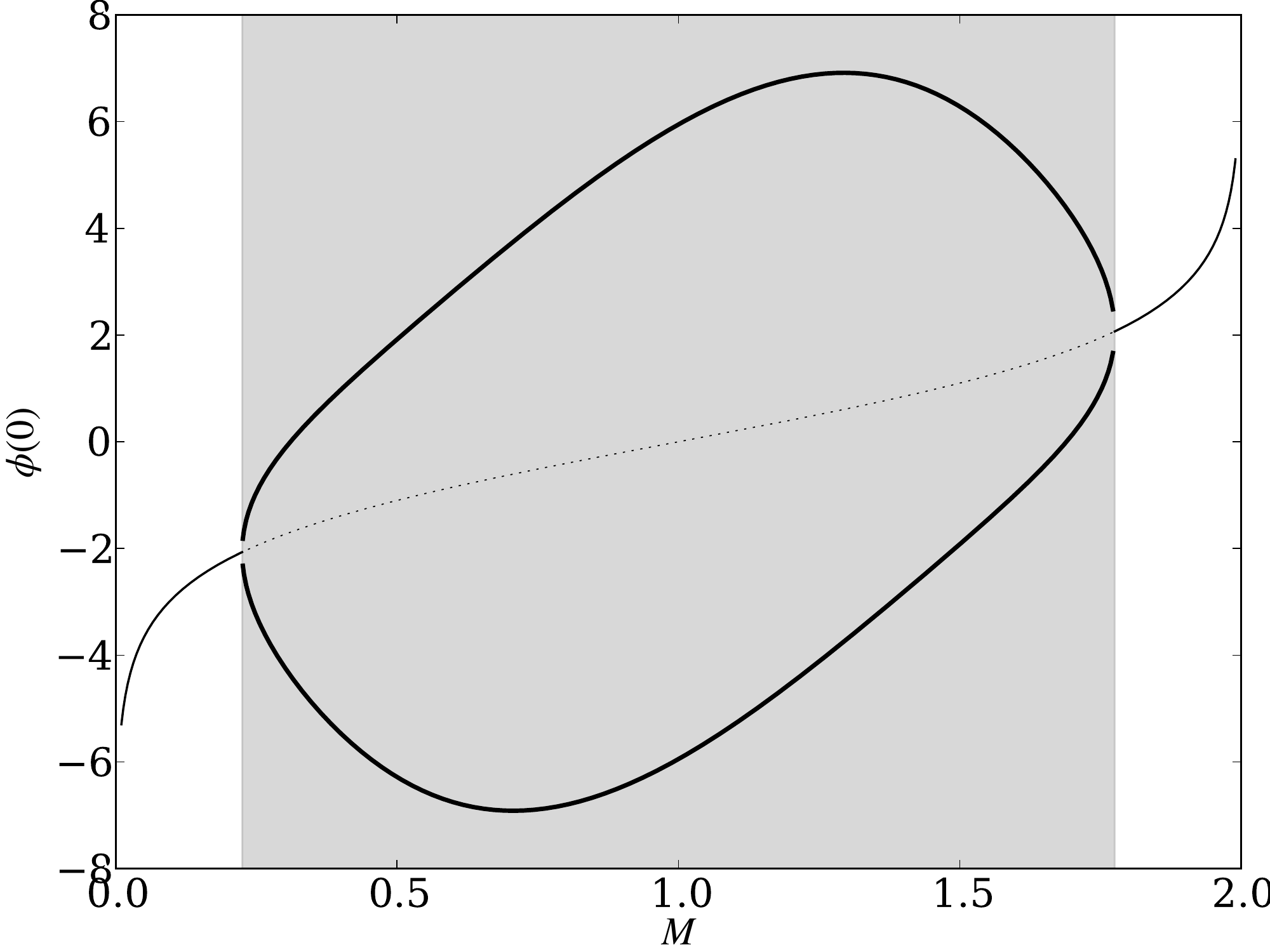}{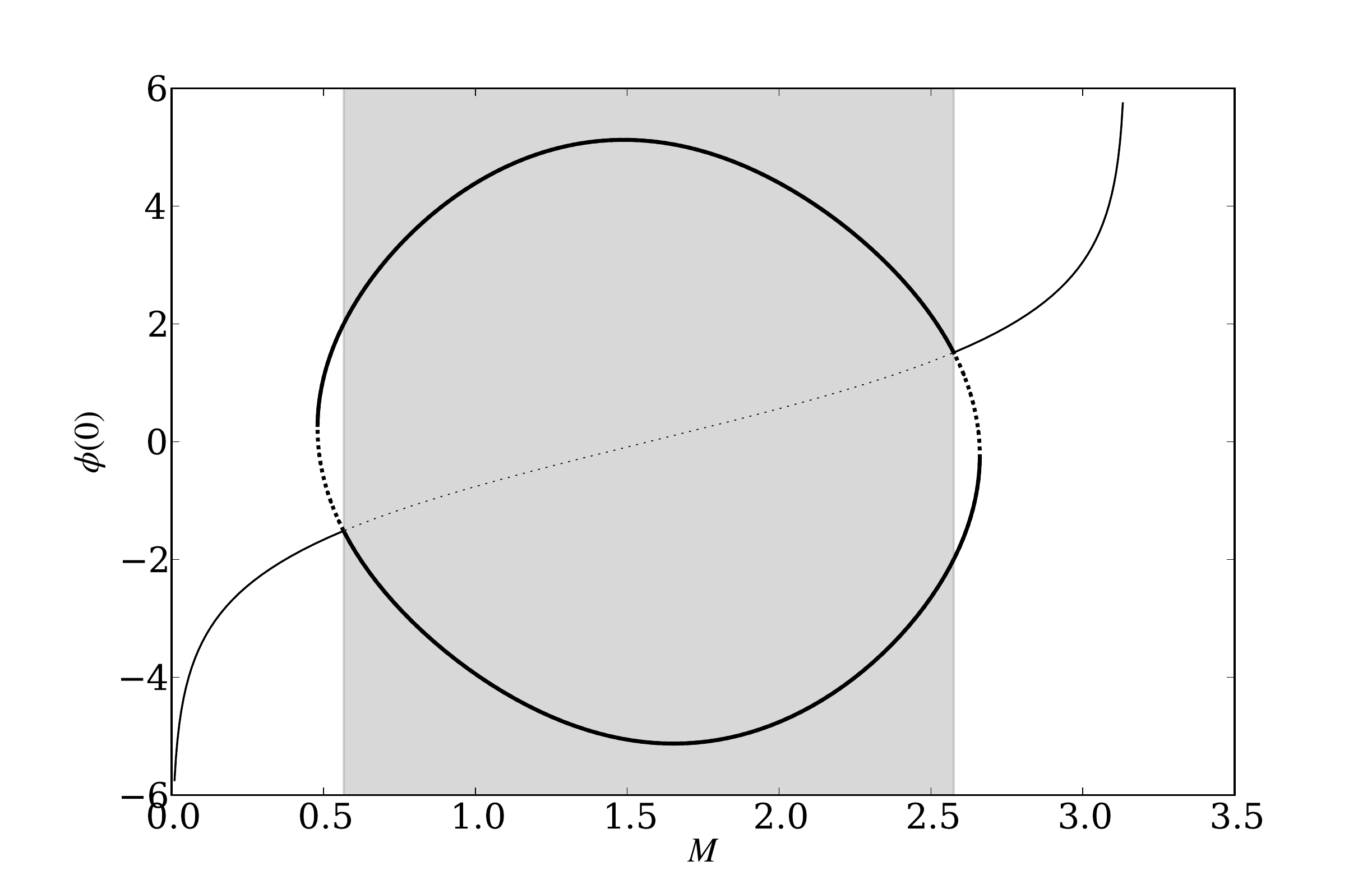}{Model (II), $\kappa=10^{-2}$, $\delta=10^{-3}$: thin lines represent constant solutions and bold ones the plateau-like solutions. The dotted part of each branch shows where solutions are dynamically unstable. \emph{Left:} $d=1$. \emph{Right:} $d=2$.}{Fig7-8}

Dynamical and variational stability criteria and their interplay are a tricky issue, especially in case of Model (I) in which we have no theoretical framework to relate the two notions. See Fig.~\ref{Fig:10}.
\vspace*{-6pt}

\DeuxFigures{ht}{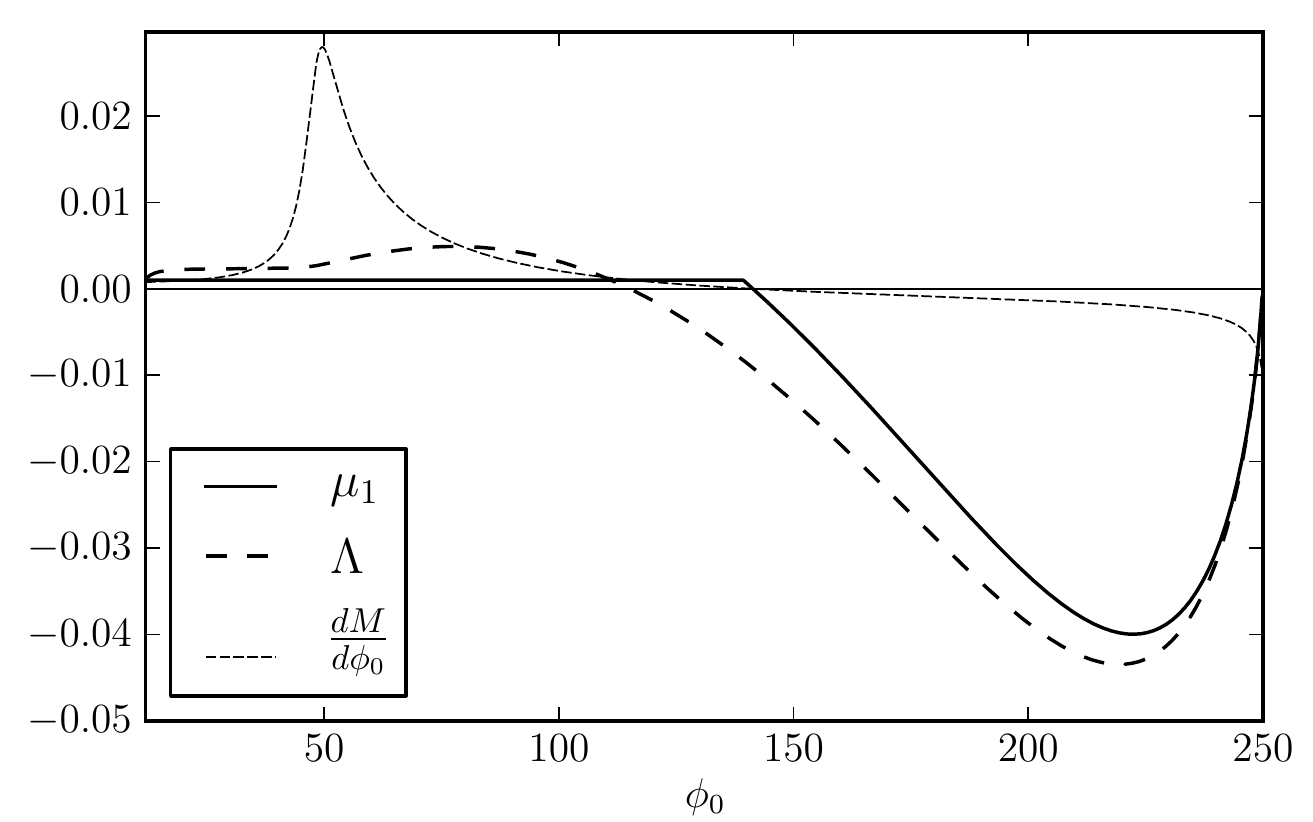}{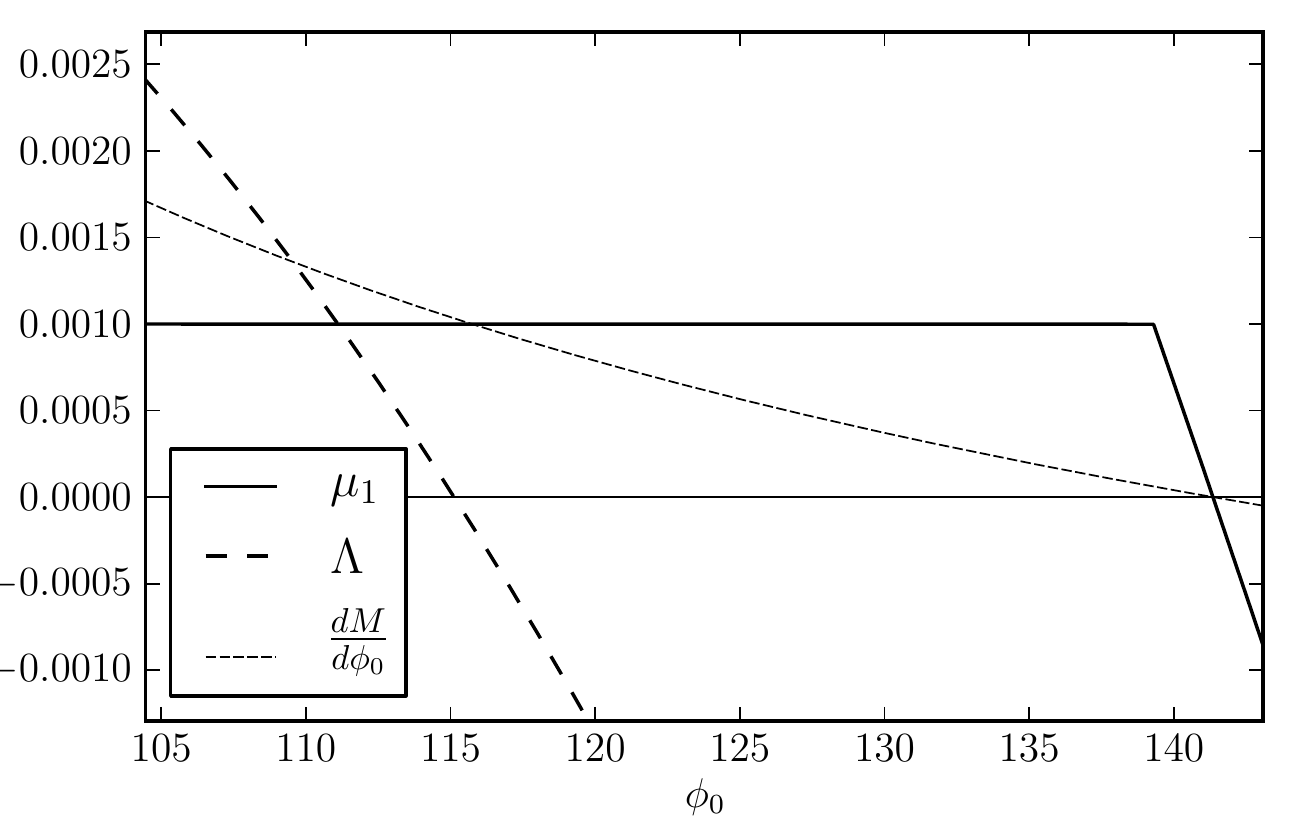}{Model (I), $d=1$. We numerically compare the criteria for variational and dynamical instability along the branch of monotone, non-constant solutions. When $dM/d\barD$ changes sign, this means that the branch has a turning point when plotted in terms of $M$. We observe that this turning point corresponds to the loss of dynamical stability, while variational stability is lost for smaller values of $\barD$ along the branch: see in particular the enlargement (right). Here $\mu_1$ corresponds to the lowest value of $\mathrm{Re}(\langle(u,v),-\LinearOp\,(u,v)\rangle)$ under the constraints \hbox{$\langle(u,v),(u,v)\rangle=1$} and $\int_\Omega u\,\dd x=0$, and $\langle\cdot,\cdot\rangle$ denotes the standard scalar product.}{Fig:10}

Stationary solutions are critical points of $\E$. It is therefore interesting to determine whether they are minima or not, either for fixed values of $\barD$ or for fixed values of $M$, which makes more sense from the dynamical point of view. However, only in the case of Model (II) minimizers of $\E$ are also minimizers of~$\L$ and therefore dynamically and variationally stable. See Figs.~\ref{Fig:13-14} and~\ref{Fig:15-16}.

\TroisFigures{ht}{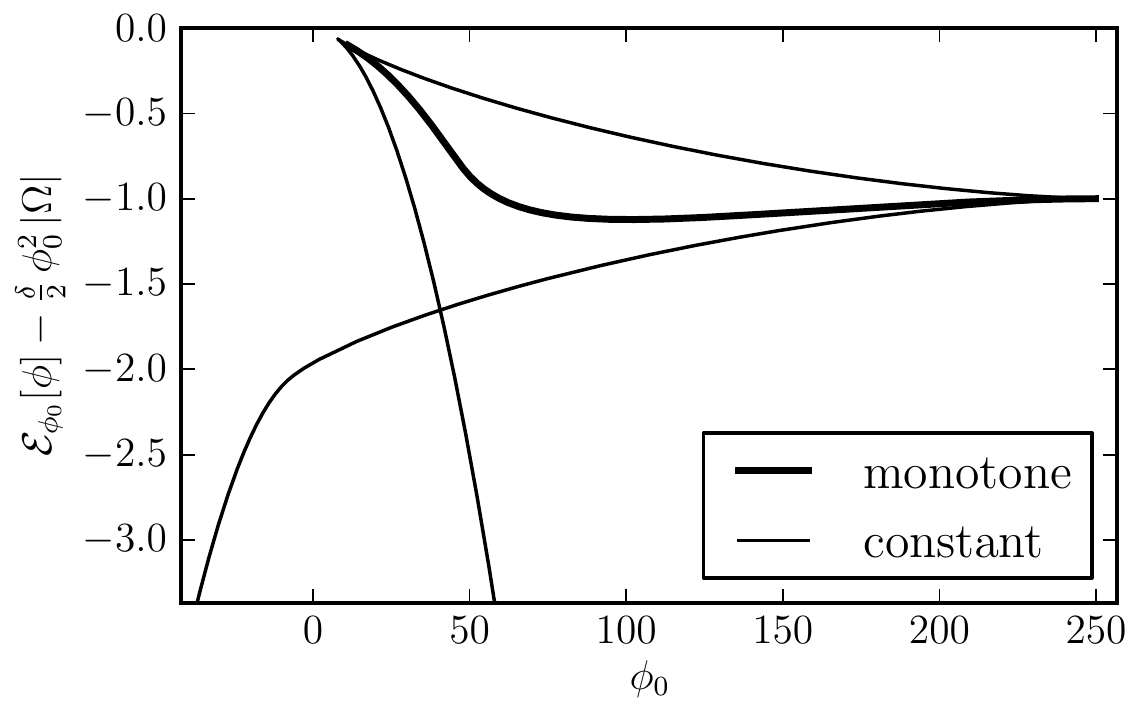}{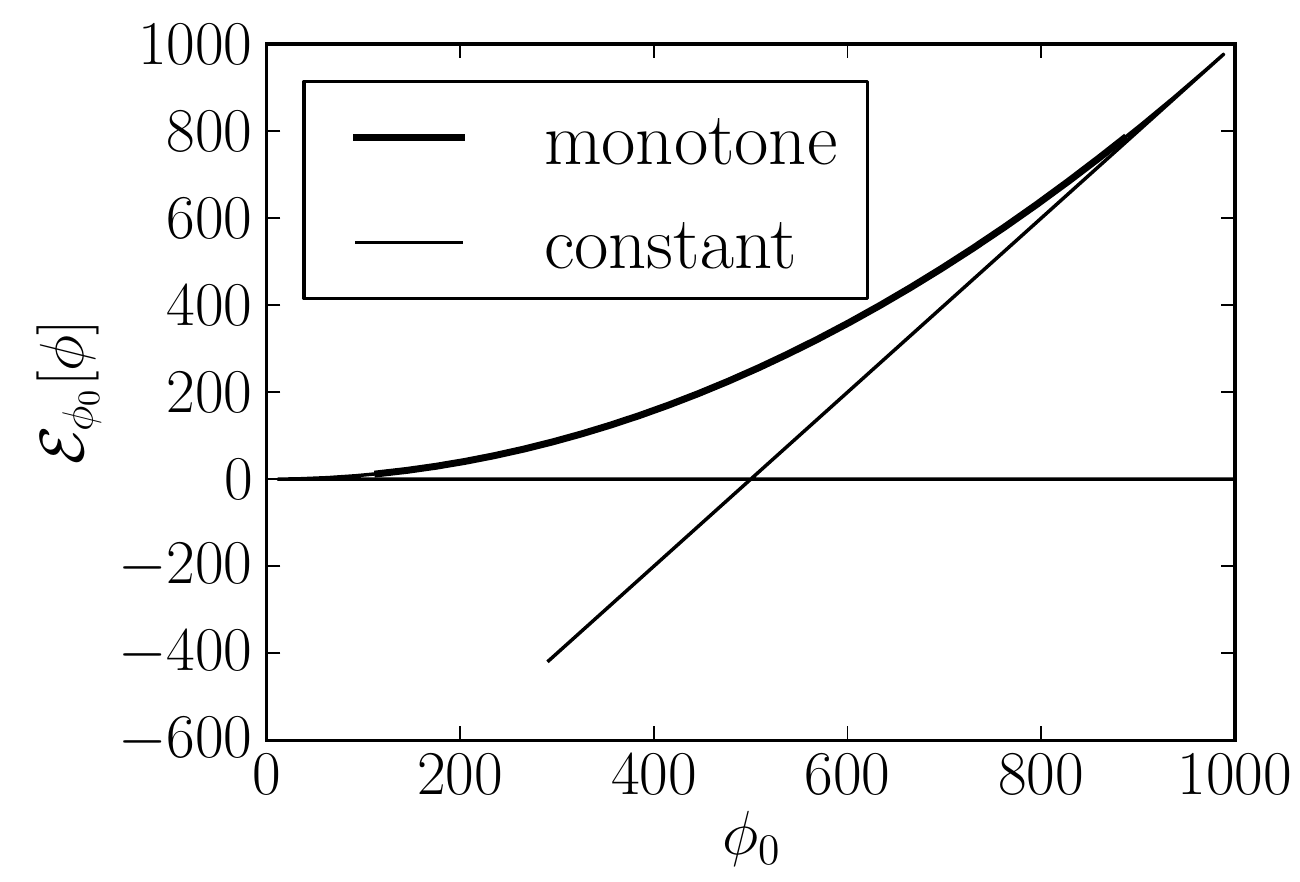}{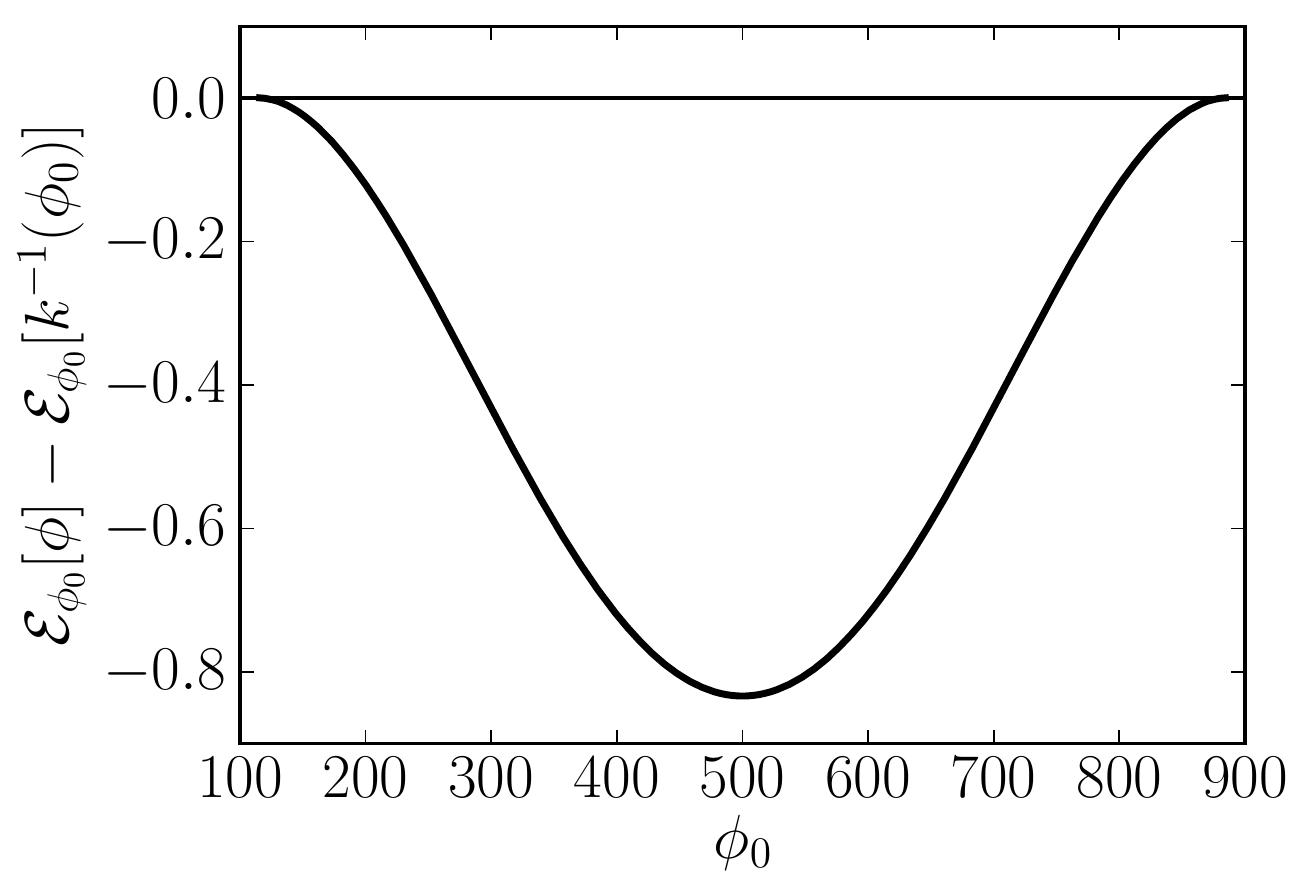}{The energy is represented as a function of $\barD$ for constant and monotone (either increasing or decreasing) solutions. Here we assume $d=1$. \emph{Left:} Model (I), the energy $\E$ is shifted by $\frac\delta 2\,\phi_0^2\,|\Omega|$. \emph{Center:} Model (II). Non constant solutions (upper curve) are undistinguishable from a branch of constant solutions. \emph{Right:} Details for Model (II): difference of the energies of the constant and non-constant solutions (under appropriate restrictions on $\barD$).}{Fig:13-14}

\DeuxFigures{ht}{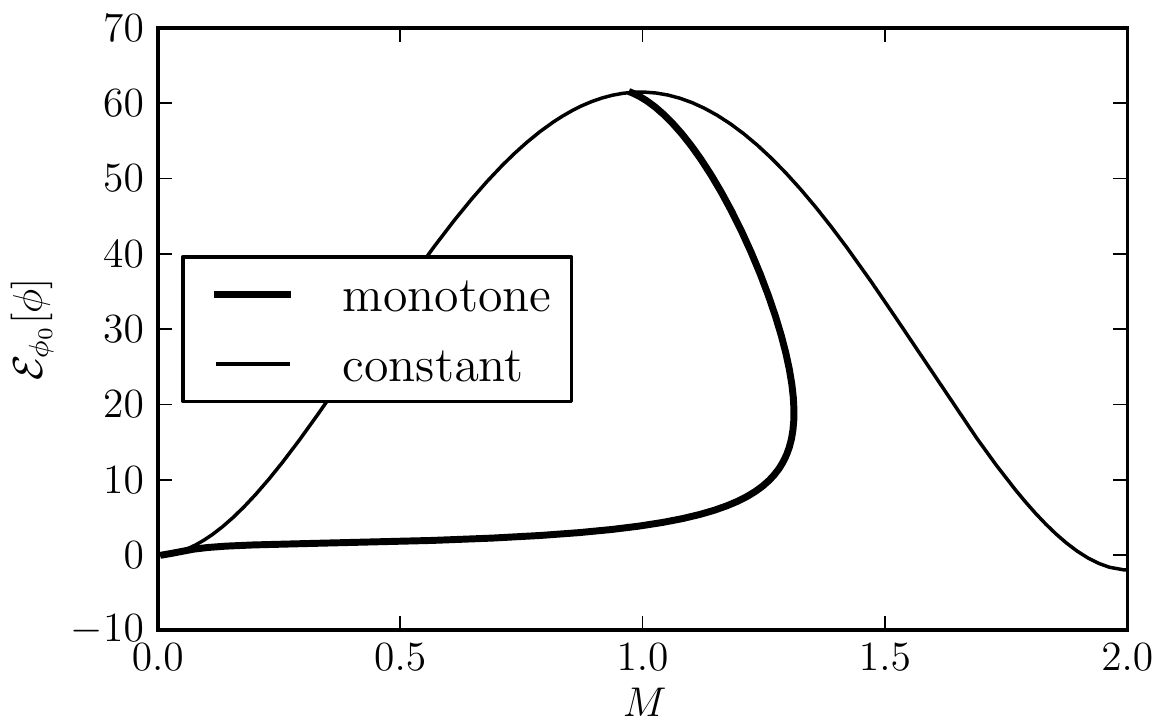}{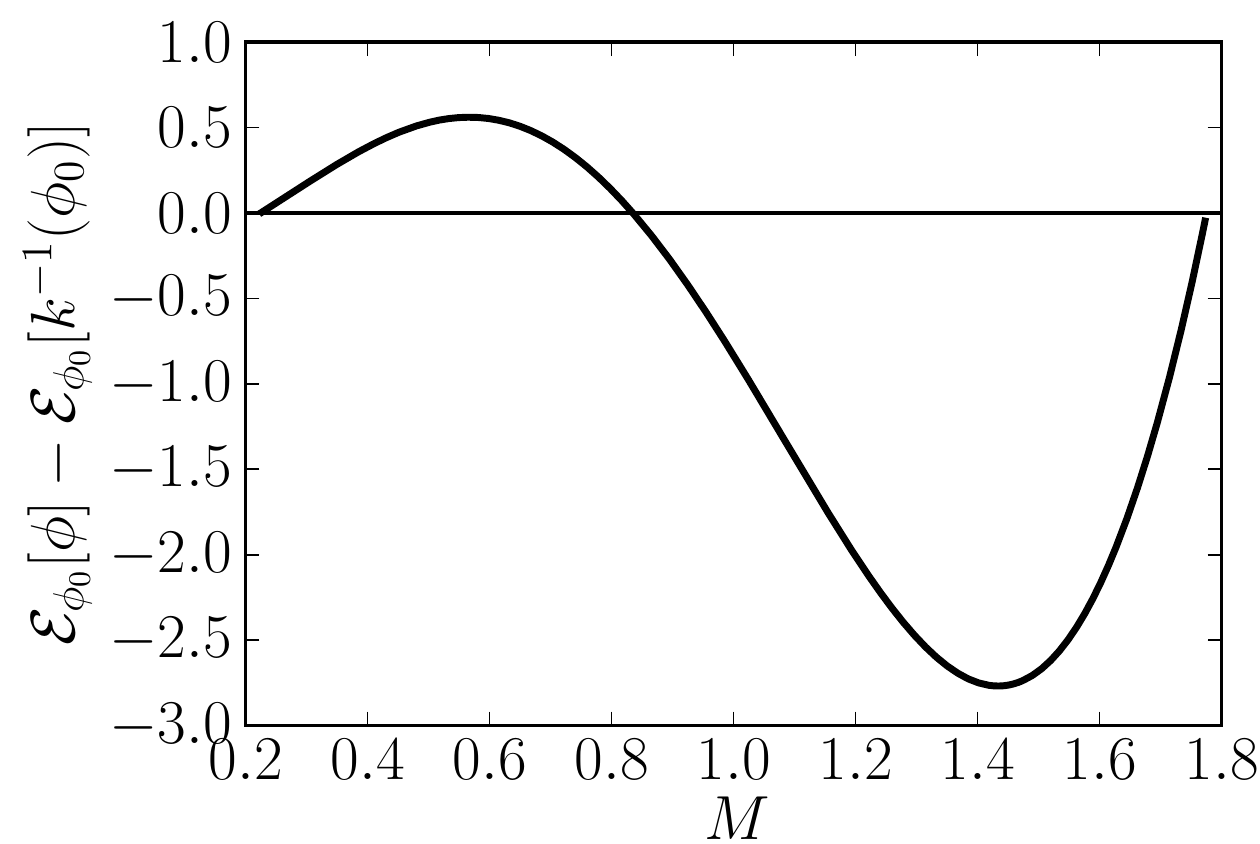}{For any given mass, there is exactly one constant solution. Hence minimizers of $\F[D]=\E[\phi+\barD]$ with $\barD=\barDM[D]$ for masses $M$ in a certain range are not constant. \emph{Left:} Model (I), $d=1$. \emph{Right:} Model (II), $d=1$. These minimizers are also minimizers of the Lyapunov functional and therefore dynamically stable ($\barD$ is restricted to an appropriate range).}{Fig:15-16}

Finally in case of Model (II), we can check that dynamical and variational stability are compatible, see Fig.~\ref{Fig:17-19}.
\vspace*{-6pt}
\DeuxFigures{ht}{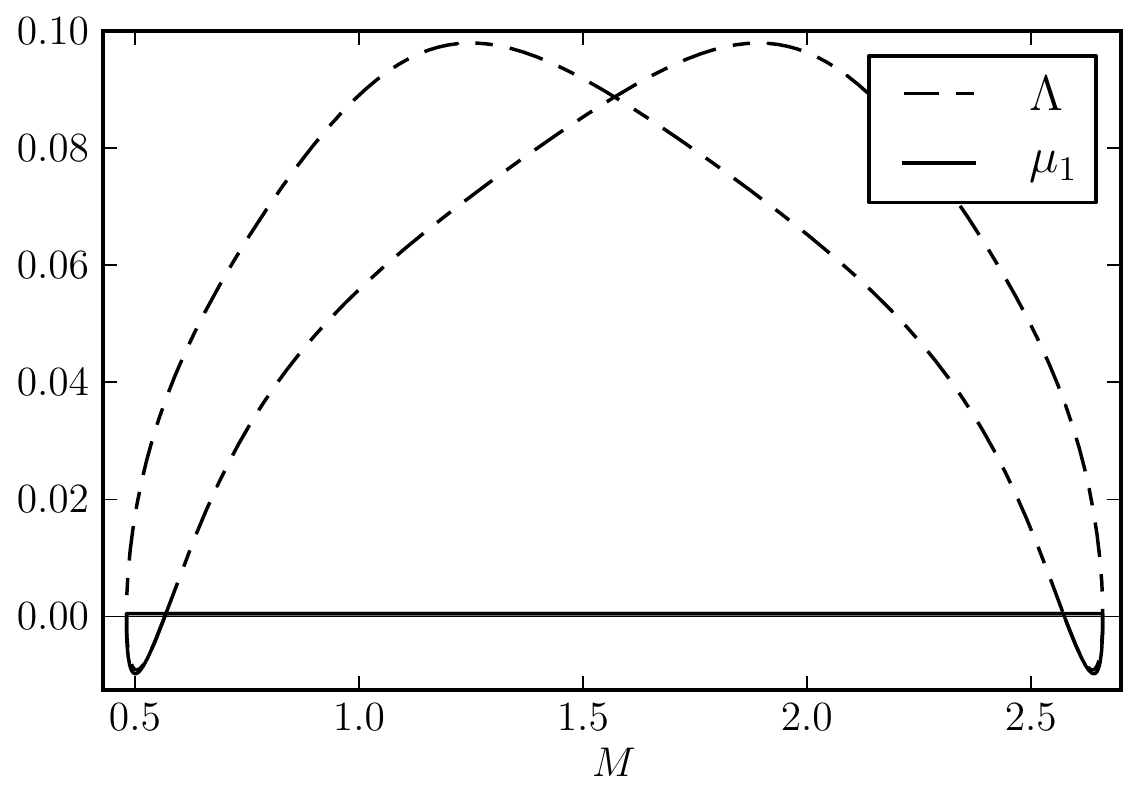}{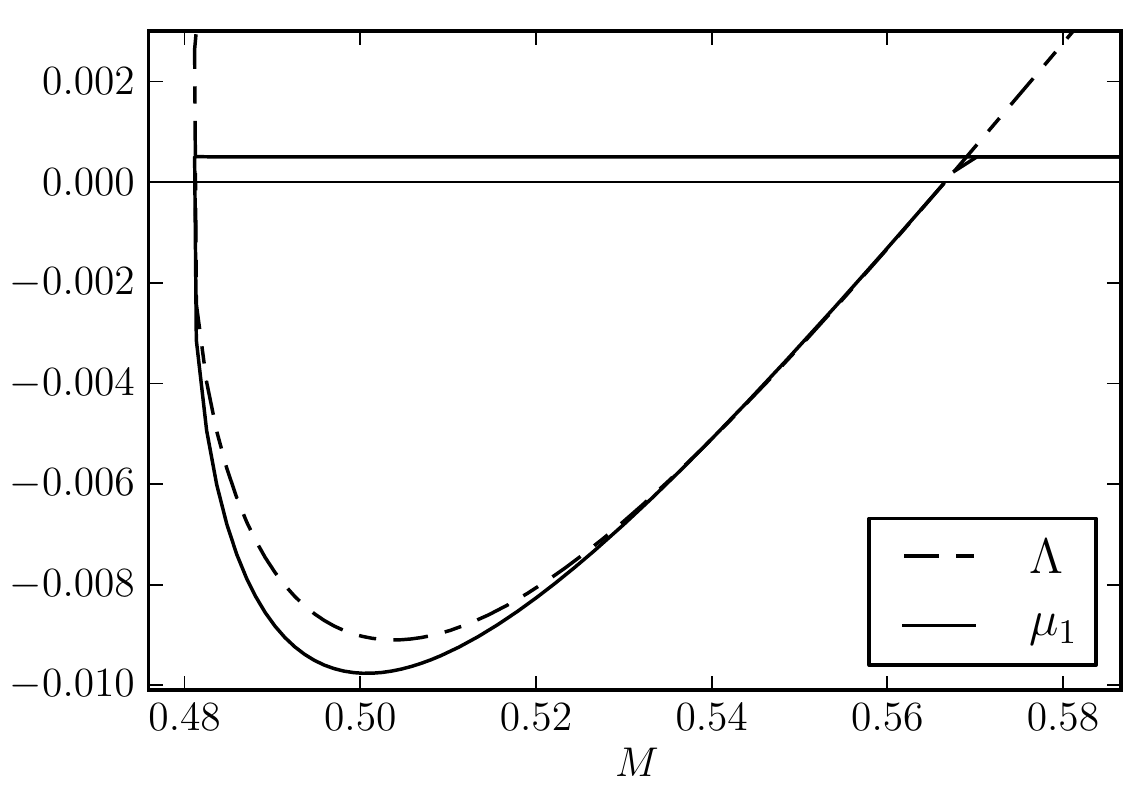}{Model (II), $d=2$. \emph{Left:} Solution of $M\mapsto\Lambda$, where, for each $M$, we compute the two monotone plateau-like solutions, and then $\Lambda$ according to \eqref{Eqn:lambda}. Hence $\Lambda<0$ means that the solution is variationally unstable under the mass constraint. \emph{Right:} Detail is shown. Here $\mu_1$ corresponds to the lowest value of $\langle(u,v),-\LinearOp\,(u,v)\rangle$ under the constraints $\langle(u,v),(u,v)\rangle=1$ and $\int_\Omega u\,\dd x=0$.}{Fig:17-19}

\section{Concluding remarks}\label{Sec:Conclusion}

{\bf Model (II)} is the (formal) gradient flow of the Lyapunov functional $\L$ with respect to a distance corresponding to Wasserstein's distance for $\rho$ and an $L^2$ distance for~$D$ (see \cite{Blanchet-Laurencot,Carrillo-Lisini,laurencot2011gradient} for further considerations in this direction). Critical points of $\L$ are stationary solutions for the system, they attract all solutions of the evolution equation and the infimum of $\L$ is achieved by a monotone function, which is therefore either a plateau solution or a constant solution. When $d=1$ numerics, at least for the values of the parameters we have considered, show that plateau solutions exist only in the range in which constant solutions are unstable and are uniquely defined in terms of the mass, but when $d=2$, the range for dynamically stable plateau solutions is larger than the range (in terms of the mass) of constant unstable solutions under radial perturbation. Infima of $\L$ and $\E$ actually coincide. Consistently with our analysis, we find that the linearized evolution operator around minimizing solutions has only positive eigenvalues. Moreover, this operator is self-adjoint in the norm corresponding to the quadratic form given by the second variation of $\L$ around a minimizer. Hence, when $d=2$, we observe the existence of multiple stable (under radial perturbations) stationary solutions.

In case of {\bf Model (I)}, no Lyapunov functional is available, to our knowledge. Still, all stationary states are characterized as critical points of $\E$ and obtained (as long as they are radially symmetric) using our shooting method. In dimension $d=1$, the structure of the set of solutions is not as simple as in Model~(II), and this can be explained by the frustration due to the $\rho\,(1-\rho)$ term in the equation for $D$. Numerically, when $d=1$, we observe that monotone plateau solutions are uniquely defined and dynamically stable in the range where constant solutions are dynamically unstable. However, when $d=1$, we also have a range in which both types of solutions are dynamically stable, which means that the system has no global attractor. We do not even know whether stationary solutions attract all solutions of the evolution problem or not.

To give a simple picture of the physics involved in the two models of crowd modeling studied in this paper, we may use the following image. The potential $D$ defines the \emph{strategy} of the individuals. It takes into account the source term (the density $\rho$ in case of Model~(II) and $\rho\,(1-\rho)$ in case of Model~(I)) to determine a preferred direction. Because it is governed by a parabolic equation, it takes the value of the source term into account not only at instant $t$, but also in the past, which means that there is a memory effect. Of course, recent past receives a larger weight, and actually two mechanisms are at work to update the system: a local damping, with time scale determined by $\delta$ and a diffusion term (position of the source term gets lost on the long time range), with a time scale governed by $\kappa$. Both coefficients being small, the time scale (that is, the memory of the system) is long compared to the time scale for $\rho$.

As far as $\rho$ is concerned, the diffusion accounts for random effects while the drift is tempered by some \emph{tactical} term, which tries to avoid densely populated areas, and is taken into account by the mean of the $(1-\rho)$ term in the drift.

In case of Model (II) the strategy defined by the source term is simple: individuals want to aggregate in high $\rho$ densities. In case of Model (I) the strategy is different, as the system tends to favor regions with intermediate densities, typically $\rho$ of the order of $1/2$. Of course, this is antagonist with the trend to concentrate in regions where $D$ is large and introduces some frustration in the system. At a very qualitative level, this is an explanation for the fact that multiplicity of dynamically stable stationary state occurs in Model (I) even when $d=1$.

\par\medskip\centerline{\rule{2cm}{0.2mm}}\medskip\noindent{\it Acknowledgements.} {\small Authors have been supported by the ANR project CBDif-Fr. J.D.~and P.M.~thank King Abdullah University of Science and Technology (KAUST) for support. The authors thank the two referees who have suggested significant improvements.}

\medskip\noindent{\scriptsize\copyright~2013 by the authors. This paper may be reproduced, in its entirety, for non-commercial~purposes.}


\end{document}